\newtheorem{theorem}{Theorem}[section]
\newtheorem{Proposition}[theorem]{Proposition}
\theoremstyle{definition}
\newtheorem{definition}[theorem]{Definition}
\theoremstyle{remark}
\newtheorem{remark}{Remark}
\newtheorem{example}[theorem]{Example}
\numberwithin{equation}{section}	 
\newcommand{\YB}[1]{{\color{black} #1}}
\title{ }
\begin{document}
\begin{flushleft}\huge 
\textbf{Asymptotic Properties of the Empirical Spatial Extremogram}
\end{flushleft}
\vspace*{1ex}
\begin{flushleft}\Large
\textbf{Yongbum Cho and Richard A. Davis}\\
\large
Department of Statistics, Columbia University\\
\Large
\textbf{Souvik Ghosh}\\
\large
LinkedIn Corporation
\normalsize
\end{flushleft}






\begin{flushleft}
\vspace*{2ex}
\textbf{ABSTRACT.} The extremogram is a useful tool for measuring extremal dependence and checking model adequacy in a time series.  We define the extremogram in the spatial domain when the data is observed on a lattice or at locations distributed as a Poisson point process in $d$-dimensional space. We establish a central limit theorem for the empirical spatial extremogram. We show these conditions are applicable for max-moving average processes and Brown-Resnick processes and illustrate the empirical extremogram's performance via simulation. We also demonstrate its practical use with a data set related to rainfall in a region in Florida.\\
\vspace*{2ex}
\textit{Keywords:} extremal dependence; extremogram; max moving average; max stable process;  spatial dependence
\end{flushleft}

\section{Introduction} 
Extreme events can affect our lives in many dimensions. Events like large swings in financial markets or extreme weather conditions such as floods and hurricanes can cause large financial/property losses and numerous casualties. Extreme events  often appear to cluster and that has resulted in a growing interest in  measuring \textit{extremal dependence}  in many areas including finance, insurance, and atmospheric science. 

{Extremal dependence} between two random vectors $X$ and $Y$ can be viewed as the probability that $X$ is extreme given 
$Y$ belongs to an extreme set. 
The \textit{extremogram}, proposed by \citet{Richard09}, is a versatile tool for assessing extremal dependence in a stationary time series. 
The extremogram has two main features:
\begin{itemize} 
\item It can be viewed as the extreme-value analog of the autocorrelation function of a stationary time series, i.e., extremal dependence is expressed as a function of lag. 
\item It allows for measuring dependence between random variables belonging in a large variety of extremal sets.   Depending on choices of sets, many of the commonly used extremal dependence measures - right tail dependence, left tail dependence, or dependence among large absolute values - can be treated as a special case of the extremogram. The flexibility coming from arbitrary choices of extreme sets have made it especially well suited for time series applications such as high-frequency FX rates (\citet{Richard09}), cross-sectional stock indices (\citet{Richard10}), and CDS spreads (\citet{CDS(extremogram)}). 
\end{itemize}

In this paper, we will define the notion of the extremogram for random fields defined on $\mathbb{R}^d$ for some  $d>1$ and investigate the asymptotic properties of its corresponding empirical estimate. Let $\{ X_s,  s \in  \mathbb{R}^d\} $ be a stationary $\mathbb{R}^k$-valued random field. For measurable sets  $A,B \subset \mathbb{R}^k$ bounded away from \textbf{0}, we define the \textit{spatial extremogram} as
\begin{gather}
\label{ee.def.1.1}	 \rho_{AB}(h) = \lim_{x \rightarrow \infty} P(X_h \in xB| X_{\textbf{0}} \in xA),  \quad h \in \mathbb{R}^d,
\end{gather}
provided the limit exists. We call (\ref{ee.def.1.1}) the \textit{spatial extremogram} to emphasize that it is for a random field in $\mathbb{R}^d.$  If one takes $A=B=(1, \infty)$ in the $k=1$ case, then we recover the tail dependence coefficient between $X_h$ and $X_{\textbf{0}}.$ For light tailed time series, such as stationary Gaussian processes, $\rho_{AB}(h) = 0$ for $h \ne \textbf{0}$ in which case there is no extremal dependence. However, for heavy tailed processes in either time or space, $\rho_{AB}(h) $ is often non-zero for many lags $h \neq \textbf{0}$ and for most choices of sets $A$ and $B$ bounded away from the origin. 

We will consider estimates of $\rho_{AB}(h)$ under two different sampling scenarios. In the first, observations are taken on the lattice $\mathbb{Z}^d$.  Analogous to \citet{Richard09}, we define the \textit{empirical spatial extremogram} (ESE)  as
        \begin{equation}
		\label{ee.lattice}  \hat{\rho}_{AB {,m}}(h) = \frac{ \sum_{s, t \in \Lambda_n, s-t=h}   I_ { \{a_m^{-1} X_s \in A,  a_m^{-1} X_{t} \in B \} } / n(h)}  {  \sum_{s \in \Lambda_n}  I_ { \{ a_m^{-1} X_s \in A \}}/  {  \# \Lambda_n} },
	\end{equation}
where 
\begin{itemize}
\item $\Lambda_n=\{1,2,.\ldots,n\}^d$ is the $d$-dimensional cube with side length $n$, 
\item $h\in \mathbb{Z}^d$ are observed lags in $\Lambda_n$,
\item $m=m_n$ is an increasing sequence satisfying $m\to \infty$ and  $ m/n \rightarrow 0$ as $n\to \infty$,
\item $ a_m$ is a sequence such that $ P(|X| > a_m) \sim m^{-1}$,
\item $n(h)$ is the number of pairs in $\Lambda_n$ with lag $h$, and
\item $  \#  \Lambda_n  $ is the cardinality of $\Lambda_n$.
\end{itemize}

 {In the second case, the data are assumed to come from a stationary random field $X_s,$ where the locations $\{s_1,...,s_{N} \}$ are assumed to be points of a homogeneous Poisson point process on $S_n\subset \mathbb{R}^d$.  We define the {empirical spatial extremogram} as a kernel estimator of $\rho_{AB}(h)$, in the spirit of the estimate of autocorrelation in space (see \citet{lgs}). Under suitable growth conditions on $S_n$ and restrictions on the kernel function, we show that the weighted estimator of $\rho_{AB}(h)$ is consistent and asymptotically normal.} 

The organization of the paper is as follows: In Section \ref{section.main.result}, we present the asymptotic properties of the ESE for both cases described above. Section \ref{section.examples} provides examples illustrating the results of Section \ref{section.main.result} together with a simulation study demonstrating the performance of the ESE. In Section \ref{section.application}, the spatial extremogram is applied to a spatial rainfall data set in Florida. The proofs of all the results are in the Appendix.

\section{Asymptotics of the ESE}  \label{section.main.result}

\subsection{Definitions and notation} \label{section.notation}

 {Let $\{X_s, s\in I\}$ be  a $k$-dimensional strictly stationary random process where $I$ is either $\mathbb{R}^d$ or $\mathbb{Z}^d$.  For $H=\{h_1,\ldots,h_t\}\subset I$, we use $X_{H}$ to denote $(X_{h_1},\ldots,X_{h_t})$. {The random field  is said to be \textit{regularly varying} with \textit{index} $\alpha>0$ if for any $H,$ the radial part $\|X_{H}\|$ satisfies for all $ y > 0$
	\begin{flushleft}
	(C1) $ \displaystyle \qquad  \qquad \qquad  \qquad \qquad   \qquad \qquad  \frac{P\big(\|X_{H}\| >yx \big)}{ P\big( \|X_{H}\| > x\big)} \rightarrow y^{- \alpha} \; \mbox{ as } x\to \infty, $
	\end{flushleft}
and the angular part $\frac{X_{H}}{\|X_{H}\|}$ is asymptotically independent of the radial part $\|X_{H}\|$ for large values of $\|X_{H}\|$, i.e., there exists a random vector $\Theta_H \in \mathbb{S}^{tk-1}$, the unit sphere in $\mathbb{R}^{t k}$ with respect to $\| \cdot \|$, such that
\begin{flushleft}
	(C2) $ \displaystyle \qquad  \qquad \qquad  \qquad \qquad     P \left( \frac{X_{H}}{\|X_{H}\|} \in \cdot \Big| \|X_{H}\| > x \right ) \xrightarrow{w}  P \left( \Theta_H \in \cdot \right ) \; \mbox{ as } x\to \infty,$
	\end{flushleft}
where $\xrightarrow{w}$ denotes weak convergence. The distribution of $ P \left( \Theta_H \in \cdot \right )$ is called the \textit{spectral measure} of $X_H$.}

An equivalent definition of regular variation is given as follows. There exists a sequence $ a_n \rightarrow \infty, \alpha >0$ and a family of non-null Radon measures $(\mu_{H})$ on the Borel $\sigma$-field of $\bar{\mathbb{R}}^{t k} \setminus \{ \textbf{0}\} $ such that  $nP(a_n^{-1}X_{H} \in \cdot) \xrightarrow{v} \mu_{H}(\cdot)$ for $t \geq 1$, where the limiting measure satisfies $\mu_H(y \cdot) = y^{-\alpha}\mu_H(\cdot)$ for $y >0$ . Here, $ \xrightarrow{v}$ denotes vague convergence. Under the regularly varying assumption, one can show that (\ref{ee.def.1.1}) is well defined. \YB{See Section 6.1  of} \citet{Resnick(HT)} for more details.

\subsection{Random fields on a lattice} 

Let $\{X_s, s \in \mathbb{Z}^d \}$ be a strictly stationary random field and suppose we have observations  $\{ X_{s}, s  \in \Lambda_n = \{1,...,n\}^d \}$. Let $d(\cdot,\cdot)$ be a metric on $\mathbb{Z}^d$. We denote the $\alpha$-mixing coefficient by 
\begin{equation*}
\alpha_{j,k}(r)= \sup \big\{ \alpha\big( \sigma(X_s,s\in S),\sigma(X_s,s\in T)\big): S,T\subset \mathbb{Z}^d,  { \# S \le j, \#T\le k}, d(S,T)\ge r  \big\},
\end{equation*}
where for any two $\sigma$-fields $\mathcal{A}$ and $\mathcal{B}$, $\alpha(\mathcal{A},\mathcal{B})=\sup\{|P(A\cap B)-P(A)P(B)|:A\in \mathcal{A},B\in \mathcal{B} \}$  {and for any $S,T\subset \mathbb{Z}^d$, $d(S,T)= \inf\{d(s,t):s\in S, t\in T\}$}.

In order to study asymptotic properties of (\ref{ee.lattice}), we impose regularly varying and certain mixing conditions on the random field. In particular, we use the big/small block argument: the side length of big blocks, $m_n$, and the distance between big blocks, $r_n$, have to be coordinated in the right fashion. To be precise, we assume the following conditions.  \\
\ \\
(\textbf{M1}) Let $B_{\gamma}$ be the ball of radius $  {\gamma}$ centered at 0, i.e., $B_{\gamma} = \{ s \in \mathbb{Z}^d: d(\textbf{0},s) \leq {\gamma} \},$ and set $c = \# B_{\gamma}$. For a fixed $\gamma$, assume that there exist $m_n, r_n \rightarrow \infty$ with {${m_n^{2+2d}}/{n^d} \rightarrow 0 $}, $ {r_{n}^d}/{m_n} \rightarrow 0 $ such that
 {\begin{eqnarray}
&&\label{cond.2} \displaystyle    \lim_{k \rightarrow \infty} \limsup_{n \rightarrow \infty} m_n \sum_{ l \in \mathbb{Z}^d, k < d(\textbf{0},l) \leq r_n}  
   P \left( \max_{ {s} \in B_{\gamma}} |X_{ {s}}| >  {\epsilon a_m},    \max_{  {s}' \in B_{\gamma}+l }|X_{ {s}'}| > {\epsilon a_m} \right) = 0 \quad \text{for}  \quad \forall \epsilon >0,   \\
&&\label{cond.1}       \lim_{n \rightarrow \infty} m_n \sum_{  l \in \mathbb{Z}^d, r_n < d(\textbf{0}, l)}  \alpha_{c,c}(d(\textbf{0},l) ) = 0,\\
&&\label{cond.000}  \sum_{l \in  \mathbb{Z}^d}   \alpha_{j_1,j_2}( d(\textbf{0}, l) ) < \infty \quad \text{for} \quad 2 c \leq  j_1+ j_2 \leq 4 c,   \\
&&\label{cond.3}   \lim_{n \rightarrow \infty}  n^{d/2} m_n^{1/2}\alpha_{c,c n^d} (m_n) = 0,
\end{eqnarray}}
where $a_m$ satisfies $P(|X| > a_m) \sim \frac{1}{m}.$

Condition (\ref{cond.2})  restricts the joint distributions for exceedance as two sets of points become far apart. Conditions (\ref{cond.1}) - (\ref{cond.3}) impose restrictions on the decaying rate of the mixing functions together with the level of the threshold specified by $m_n$.  These conditions are similar to those in \citet{Bolthausen} and \citet{Richard09}.

As in \citet{Richard09}, the ESE $\hat{\rho}_{AB,m}(h)$ is centered by the \textit{Pre-Asymptotic} (PA) extremogram
\begin{gather}
\label{def.pa.extremo} {\rho}_{AB,m}(h)   =  \frac{{\tau}_{AB,m}(h) }{{p}_m(A)},
\end{gather}
where $ {\tau}_{AB,m}(h) =  m_n P(X_{\textbf{0}} \in a_m A, X_h \in a_m B)$ and $p_m(A) = m_n P( X_{\textbf{0}} \in a_m A)$. Notice that (\ref{def.pa.extremo}) is the ratio of the expected value of the numerator and denominator in (\ref{ee.lattice}).

\label{section.asymptotics}
\begin{theorem}
\label{cor:extremogramclt} Suppose a strictly stationary regularly varying  random field $\{X_{\textbf{s}}, s \in  \mathbb{Z}^d\}$ with index $\alpha>0$ is observed on $\Lambda_n = \{1,...,n\}^d$. {For any finite set of non-zero lags $H$ in $ \mathbb{Z}^d$, assume (\textbf{M1}),  \YB{where $B_{\gamma} \supseteq H$ for some $ {\gamma}$}.  Then}
 {\begin{equation*}
 \sqrt{\frac{n^d}{m_n}}  \Big[ \hat{\rho}_{AB,m}(h)  - \rho_{AB,m}(h) \Big]_{h \in H}  \xrightarrow{d} N(\textbf{0}, \Sigma ),
\end{equation*}}
where the matrix $\Sigma$ in normal distribution is specified in Appendix A.
\end{theorem}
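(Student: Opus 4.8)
The plan is to split the estimator into its numerator and denominator ``count'' statistics, reduce the claim to a joint central limit theorem for those, and then prove that central limit theorem by a Bernstein big-block/small-block argument over $\mathbb{Z}^d$ in the spirit of \citet{Bolthausen} and \citet{Richard09}. Write
\[
\hat\tau_{AB,m}(h)=\frac{m_n}{n(h)}\sum_{s,t\in\Lambda_n,\,s-t=h} I\{a_m^{-1}X_s\in A,\ a_m^{-1}X_t\in B\},\qquad
\hat p_m(A)=\frac{m_n}{\#\Lambda_n}\sum_{s\in\Lambda_n} I\{a_m^{-1}X_s\in A\},
\]
so that $\hat\rho_{AB,m}(h)=\hat\tau_{AB,m}(h)/\hat p_m(A)$ and, by stationarity, $E\hat\tau_{AB,m}(h)=\tau_{AB,m}(h)$ and $E\hat p_m(A)=p_m(A)$. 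Regular variation together with $A$ bounded away from $\mathbf{0}$ gives $p_m(A)\to\mu_0(A)\in(0,\infty)$, and a Chebyshev bound based on the second-moment estimates obtained below gives $\hat p_m(A)\xrightarrow{P}\mu_0(A)$. Since
\begin{align*}
\sqrt{\tfrac{n^d}{m_n}}\bigl(\hat\rho_{AB,m}(h)-\rho_{AB,m}(h)\bigr)
&= \frac{1}{\hat p_m(A)}\sqrt{\tfrac{n^d}{m_n}}\bigl(\hat\tau_{AB,m}(h)-\tau_{AB,m}(h)\bigr)\\
&\quad- \frac{\tau_{AB,m}(h)}{\hat p_m(A)\,p_m(A)}\sqrt{\tfrac{n^d}{m_n}}\bigl(\hat p_m(A)-p_m(A)\bigr),
\end{align*}
Slutsky's theorem reduces everything to the joint asymptotic normality of the vector $\sqrt{n^d/m_n}\bigl((\hat\tau_{AB,m}(h)-\tau_{AB,m}(h))_{h\in H},\ \hat p_m(A)-p_m(A)\bigr)$; the matrix $\Sigma$ is then obtained from the limiting covariance via the linear map above, with $\tau_{AB,m}(h)$ and $p_m(A)$ replaced by their limits.

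Next I would invoke the Cram\'er--Wold device. Since $n(h)=n^d\,(1+O(1/n))$ uniformly over the fixed set $H$, and since $B_\gamma\supseteq H$ contains a fixed number $c=\#B_\gamma$ of points, an arbitrary linear combination of the centered count statistics equals, up to an $O_P(n^{-1/2})$ error, a single sum $\sqrt{m_n/n^d}\sum_{s\in\Lambda_n}\bigl(g_n(X_{s+B_\gamma})-Eg_n(X_{s+B_\gamma})\bigr)$, where $g_n$ is a uniformly bounded function of the restriction of the field to the $c$ sites $s+B_\gamma$ that depends on $n$ only through the normalization $a_{m_n}$. (The mismatch between the index sets $\{s:s,s-h\in\Lambda_n\}$ for different $h\in H$, and between $n(h)$ and $n^d$, contributes centered sums of variance $O(n^{d-1}/m_n)$ by \eqref{cond.000}, negligible after the $\sqrt{m_n/n^d}$ scaling.) We are thus reduced to a triangular-array central limit theorem for a stationary mixing field of bounded, finite-range functionals.

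For that I would partition $\Lambda_n$ into big blocks of side length of order $m_n$ separated by corridors of width $r_n$, giving $K_n\sim(n/m_n)^d$ big blocks; the rates $m_n^{2+2d}/n^d\to0$ and $r_n^d/m_n\to0$ guarantee enough big blocks while keeping the total corridor volume a vanishing fraction of $\#\Lambda_n$. Writing $W_{n,i}$ for the centered big-block sums, I would verify: (i) the corridor part is $o_P(\sqrt{n^d/m_n})$, from the vanishing volume fraction and the summability of covariances in \eqref{cond.000}; (ii) the joint characteristic function of the $W_{n,i}$ is asymptotically that of independent summands --- here \eqref{cond.3} is used, via the mixing inequality for bounded variables, to make the decoupling error negligible relative to the normalization; (iii) the Lindeberg--Feller theorem applies to the resulting independent block sums, the Lindeberg condition being trivial since $|W_{n,i}|=O(m_n^d)=o(\sqrt{n^d/m_n})$ under $m_n^{2+2d}/n^d\to0$; and (iv) the limiting variance $m_n\sum_{l}\mathrm{Cov}\bigl(g_n(X_{B_\gamma}),g_n(X_{B_\gamma+l})\bigr)$ converges: the lags with $d(\mathbf{0},l)>r_n$ contribute $O\bigl(m_n\sum_{d(\mathbf{0},l)>r_n}\alpha_{c,c}(d(\mathbf{0},l))\bigr)\to0$ by \eqref{cond.1}, products of marginals are $O(1/m_n)$ each and vanish, and the remaining finite-range joint-exceedance terms converge lag by lag to quantities determined by the regular-variation limit measures of the field, with \eqref{cond.2} providing the uniform tail control that makes the series over $l$ summable. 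Assembling these limits across all lags in $H$ and the denominator term yields the limiting covariance, from which $\Sigma$ follows through the reduction above.

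The main obstacle is the coordination in the last step: $m_n$ and $r_n$ must be tuned so that the decoupling error governed by \eqref{cond.3} and the corridor contribution governed by the growth rates and \eqref{cond.000} both vanish, yet the big blocks still carry the full order-$(n^d/m_n)$ variance. At the conceptual level, the delicate point is that the limiting covariance be well defined: \eqref{cond.2} is the heavy-tailed analogue of the finite-variance hypothesis in the classical central limit theorem, and without it the series $\sum_l\lim_n m_n P(\text{joint exceedance at lag }l)$ need not converge.
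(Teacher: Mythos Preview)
Your high-level reduction matches the paper: decompose $\hat\rho$ into numerator and denominator, use Slutsky and the Cram\'er--Wold device, and reduce to a CLT for $\sqrt{m_n/n^d}\sum_{s\in\Lambda_n}\bigl(g_n(X_{s+B_\gamma})-Eg_n\bigr)$ with $g_n$ a bounded functional of the $c=\#B_\gamma$ sites. The paper also packages this by passing to the vector-valued field $Y_t=X_{t+B_\gamma}$ and proving a one-dimensional CLT (its Theorem~\ref{cor:vec}), then recovering Theorem~\ref{cor:extremogramclt} via Cram\'er--Wold and the delta method. Likewise, your variance analysis in (iv) --- splitting the covariance sum at radii $k$ and $r_n$ and using \eqref{cond.2}, \eqref{cond.1} to kill the middle and outer shells --- is exactly what the paper does to get $\sigma_Y^2(C)$.

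Where you diverge is in the proof of the core CLT, and here there is a genuine mismatch with (\textbf{M1}). The paper does \emph{not} use a Bernstein big-block/small-block scheme; it follows \citet{Bolthausen} and proves normality via Stein's lemma, writing $(i\lambda-\bar S_n)e^{i\lambda\bar S_n}=B_1+B_2+B_3$ with $S_{\alpha,n}=\sum_{d(\alpha,\beta)\le m_n}\sqrt{m_n/n^d}\,\bar I_\beta$. In that argument, $m_n$ is the \emph{local radius} in Stein's construction (not a block side), and condition \eqref{cond.3} controls the $B_3$ term because it bounds $\alpha_{c,\,cn^d}(m_n)$: a single $c$-site window against the whole field at distance $m_n$. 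Your scheme has big blocks of side $m_n$ separated by corridors of width $r_n$; the telescoping/decoupling error for step (ii) is then governed by $\alpha_{c\,m_n^d,\,cn^d}(r_n)$ --- a block of $\sim c m_n^d$ sites against the rest, at distance $r_n$ --- and \eqref{cond.3} says nothing about this (the first index is only $c$, and the separation is $m_n$, not $r_n<m_n$). The sequence $r_n$ in (\textbf{M1}) is only a truncation level for the covariance sum, not a corridor width. So your plan is a legitimate CLT strategy in principle, but it would require a different mixing hypothesis than the paper assumes; under (\textbf{M1}) as stated, the Stein/Bolthausen route is what actually goes through.
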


We present the proof of Theorem \ref{cor:extremogramclt} in Appendix A. Examples of heavy-tailed processes satisfying (\textbf{M1}) are presented in Section \ref{section.examples}.

\begin{remark} \label{remark:bias}  In Theorem \ref{cor:extremogramclt}, the pre-asymptotic extremogram $ \rho_{AB,m}(h) $ is replaced by the extremogram $ \rho_{AB}(h) $ if  
\begin{eqnarray}
	\label{bias} \lim_{n \rightarrow \infty} \sqrt{\frac{n^d}{m_n}}| \rho_{AB,m}(h) - \rho_{AB}(h) | = 0, \; \quad for \quad \;  h\in H.
\end{eqnarray}
\end{remark}

\subsection{Random fields on $\mathbb{R}^d$}  
\label{section.rd}
Now consider the case of a random field defined on $\mathbb{R}^d$ and the sampling locations are given by points of a Poisson process. In this case, we adopt the ideas from \citet{karr} and \citet{lgs} and use a kernel estimate of the extremogram. For convenience, we restrict our attention to $\mathbb{R}^2$. The extension to $\mathbb{R}^d (d>1)$ is straightforward, but notationally more complex. 

Let $\{X_s, s \in \mathbb{R}^2 \}$ be a stationary regularly varying random field with index $\alpha > 0$. Suppose $N$ is a homogeneous 2-dimensional Poisson process with intensity parameter $\nu$ and is independent of $X$.  Define $N^{(2)}(ds_1, ds_2) = N(ds_1) N(ds_2) I(s_1 \neq s_2)$. Now consider a sequence of compact and convex sets $S_n \subset \mathbb{R}^2$ with Lebesgue measure $|S_n| \rightarrow \infty$ as $n \rightarrow \infty$. Assume that for each $y \in \mathbb{R}^2$
\begin{eqnarray}
\label{Karr.lemma} \lim_{n \rightarrow \infty} \frac{|S_n \cap (S_n - y)|}{|S_n|} = 1,  
\end{eqnarray}
where $S_n - y= \{ x-y: x \in S_n\}$,
\begin{eqnarray}
\label{lgs.lemma}|S_n| = O(n^2), \qquad |\partial S_n| =  O(n),
\end{eqnarray}
and $\partial  S_n$ denotes the boundary of $S_n$.

The spatial extremogram in (\ref{ee.def.1.1}) is estimated by $ \displaystyle \hat{\rho}_{AB,m}(h) = {\hat{\tau}_{AB,m}(h) }/ {\hat{p}_m(A)},$ where
	\begin{gather}
	 \hat{p}_m(A) = \frac{m_n}{\nu |S_n|} \int_{S_n}  I\left(\frac{X_{s_1}}{a_m} \in A \right) N(d s_1),  \label{def.denom.est.RF} \\
	  \hat{\tau}_{AB,m} (h) = \frac{m_n}{\nu^2} \frac{1}{|S_n|} \int_{S_n} \int_{S_n} w_n(h + s_1 - s_2)  \;  I\left(\frac{X_{s_1}}{a_m} \in A \right)    I\left(\frac{X_{s_2}}{a_m} \in B \right)  N^{(2)}(ds_1, ds_2).  \label{def.num.est.RF} 
	\end{gather}
Here $w_n(\cdot) = \frac{1}{\lambda_n^2}w(\frac{\cdot}{\lambda_n})$ is a sequence of weight functions, where $w(\cdot)$ on $\mathbb{R}^2$ is a positive, bounded, isotropic probability density function and $\lambda_n$ is the bandwidth satisfying $\lambda_n \rightarrow 0$ and $\lambda_n^2|S_n| \rightarrow \infty.$ To establish a central limit theorem for $\hat{\rho}_{AB,m}(h)$, we derive asymptotics of the denominator ${\hat{p}_m(A)}$ and numerator ${\hat{\tau}_{AB,m}(h) }$. In order to show consistency of ${\hat{p}_m(A)}$, we assume the following conditions, which are the non-lattice analogs of (\ref{cond.2}) and (\ref{cond.1}).\\
\ \\
(\textbf{M2}) There exist an increasing sequence $m_n$ and $r_n$ with $m_n=o(n)$ and $ r_n^2 = o(m_n)$ such that
\begin{gather} 
\label{M2} \lim_{k \rightarrow \infty}\limsup_{n \rightarrow \infty} \int_{B[k,r_n]} m_n P(|X_y| > \epsilon a_m , |X_{\textbf{0}}| > \epsilon a_m) dy = 0   \quad \text{for}  \quad \forall \epsilon >0,\\
\label{M1} \lim_{n \rightarrow \infty} \int_{\mathbb{R}^2 \setminus B[{0},r_n)} m_n \alpha_{1,1}(y) dy = 0,\\
\label{M4} \int_{\mathbb{R}^2 } \tau_{AA}(y) dy < \infty,
\end{gather}
where $B[a,b) = \{s: a \leq d(\textbf{0},s) < b, s \in \mathbb{R}^2 \}$ and $ \displaystyle {\tau}_{AA}(y) = \lim_{n \rightarrow \infty}   {\tau}_{AA,m}(y)$.

For a central limit theorem for ${\hat{\tau}_{AB,m}(h) }$, the following conditions are required.\\
\ \\
(\textbf{M3}) Consider a cube $B_n  \subset S_n$ with $|B_n| = O(n^{2 a})$ and $|\partial B_n| =  O(n^{a})$ for $0 < a <1.$ Assume that there exist an increasing sequence $m_n$  with $m_n=o(n^{a})$ and $\lambda_n^2 m_n \rightarrow 0$ such that
\begin{gather} 
 \label{clt.cond.2}  \displaystyle \sup_n E\left\{  \sqrt{\frac{|B_n| \lambda_n^2}{m_n}} \big| \hat{\tau}_{AB,m}(h:B_n) - E\hat{\tau}_{AB,m}(h:B_n)   \big|^{2 + \delta}  \right \} \leq C_{\delta},  \quad \delta > 0, \; C_{\delta} < \infty, 
\end{gather}
where $\hat{\tau}_{AB,m}(h:B_n)$ is the quantity (\ref{def.num.est.RF}) with $S_n$ replaced by $B_n$ on the right-hand side. Further assume
\begin{gather} 
\label{irregular.numerator.integration}    \int_{\mathbb{R}^2} \tau_{AB}(y) dy < \infty, \; \;  \int_{\mathbb{R}^2} \alpha_{2,2} (d(\textbf{0},y)) dy < \infty, 
\end{gather}
and 
\begin{gather} 
 \label{clt.cond.3}  \displaystyle \sup_{l} \frac{\alpha_{l,l}(h)}{l^2} = O(h^{-\epsilon}) \quad for \; some \; \epsilon > 0.
\end{gather}

Lastly, the proof requires some smoothness of the random field. 
\begin{definition}
A stationary regularly varying random field $\{X_s, s \in \mathbb{R}^d \}$ satisfies a \textit{local uniform negligibility condition (LUNC)} if for an increasing sequence $a_n$ satisfying $P(|X| > a_n) \sim \frac{1}{n}$ and for all $\epsilon, \delta > 0$, there exists $ \delta'  >0$ such that 
	\begin{eqnarray}
		\label{note.2} \limsup_n nP \left( \sup_{||s|| < \delta'} \frac{|X_s - X_{\textbf{0}}|}{a_n} > \delta  \right) < \epsilon.
	\end{eqnarray}
\end{definition}

\begin{theorem} \label{theorem:clt:2} Let $\{X_s, s \in \mathbb{R}^2 \}$ be a stationary regularly varying random field with index $\alpha > 0$ satisfying LUNC. Assume $N$ is a homogeneous 2-dimensional Poisson process with intensity parameter $\nu$ and is independent of $X$. Consider a sequence of compact and convex sets $S_n \subset \mathbb{R}^2$ satisfying $|S_n| \rightarrow \infty$ as $n \rightarrow \infty$. 
Assume  conditions (\textbf{M2}) and (\textbf{M3}). Then for any finite set of non-zero lags $H$ in $\mathbb{R}^2$,
\begin{eqnarray}
	\label{final}  \sqrt{\frac{|S_n|\lambda_n^2}{m_n}  } \left[ \hat{\rho}_{AB,m}(h) - {\rho}_{AB,m}(h) \right]_{h\in H}\rightarrow N(\textbf{0}, \Sigma),
\end{eqnarray}
where the matrix $\Sigma$ is \YB{specified in the proof of Theorem \ref{cor:extremogramclt} in Appendix A}.
\end{theorem}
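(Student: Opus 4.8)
The plan is to show that, after the stated scaling, the statistic is governed entirely by its numerator $\hat\tau_{AB,m}(h)$, to establish a central limit theorem for the numerator by a big‑block/small‑block decomposition of $S_n$ into cubes of the type $B_n$ of (\textbf{M3}), and to read off $\Sigma$ from a second‑moment (Campbell/Mecke) expansion exactly paralleling the proof of Theorem \ref{cor:extremogramclt}.

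First I would dispose of the denominator. Conditioning on $X$ (which is independent of $N$) and using Campbell's formula, $E\hat p_m(A)=p_m(A)$ and $\mathrm{Var}\,\hat p_m(A)=O(m_n/|S_n|)$: the Poisson ``diagonal'' part contributes $\tfrac{m_n}{\nu|S_n|}p_m(A)$, while the remaining part is at most $\tfrac{m_n}{|S_n|}\int_{\mathbb R^2}m_nP(|X_{\textbf 0}|>\epsilon a_m,|X_y|>\epsilon a_m)\,dy$, which stays bounded by (\ref{M2})--(\ref{M4}). Since $\lambda_n\to0$, this gives $\sqrt{|S_n|\lambda_n^2/m_n}\,(\hat p_m(A)-p_m(A))=\lambda_n\,O_P(1)=o_P(1)$. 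With $p_m(A)\to p(A):=\lim_m p_m(A)>0$, Taylor‑expanding $\hat\rho_{AB,m}(h)-\rho_{AB,m}(h)=p_m(A)^{-1}(\hat\tau_{AB,m}(h)-\tau_{AB,m}(h))-\tau_{AB,m}(h)\,p_m(A)^{-2}(\hat p_m(A)-p_m(A))+R_n$ and multiplying by $\sqrt{|S_n|\lambda_n^2/m_n}$ annihilates the denominator term and, since $\sqrt{|S_n|\lambda_n^2/m_n}\to\infty$ (so that $\hat\tau_{AB,m}(h)$ is itself consistent), the quadratic remainder $R_n$, reducing the problem to a CLT for $p(A)^{-1}\sqrt{|S_n|\lambda_n^2/m_n}\,(\hat\tau_{AB,m}(h)-\tau_{AB,m}(h))_{h\in H}$.

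For the numerator, Campbell's formula for $N^{(2)}$ gives $E\hat\tau_{AB,m}(h)=\int w_n(h-u)\,\tfrac{|S_n\cap(S_n-u)|}{|S_n|}\,\tau_{AB,m}(u)\,du$; since $\int w_n=1$, (\ref{Karr.lemma}) forces the volume ratio to $1$, and LUNC --- under which local oscillations of $X_{\cdot}$ at scale $a_m$ over shrinking neighbourhoods have probability $O(1/m)$, and the limits $\tau_{AB}(\cdot)$ are continuous --- yields asymptotic continuity of $\tau_{AB,m}$ at $h$, so the smoothing bias $E\hat\tau_{AB,m}(h)-\tau_{AB,m}(h)$ is $o(\sqrt{m_n/(|S_n|\lambda_n^2)})$ and centering at $\tau_{AB,m}(h)$ is legitimate. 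It then remains to show $\sqrt{|S_n|\lambda_n^2/m_n}\,(\hat\tau_{AB,m}(h)-E\hat\tau_{AB,m}(h))_{h\in H}\xrightarrow{d}N(\textbf 0,\Sigma')$. Tile $S_n$ by $K_n\sim|S_n|/|B_n|$ translates $B_n^{(1)},\dots,B_n^{(K_n)}$ of the cube $B_n$ of (\textbf{M3}), separated by corridors of width $r_n$. Pairs $(s_1,s_2)$ in two different cubes satisfy $|s_1-s_2|\ge r_n$ and hence contribute $O(\int_{|u|\ge r_n/(2\lambda_n)}w(u)\,du)\to0$; pairs with a point in a corridor contribute, after centering, a variance of smaller order than the full variance, the corridor volume being $o(|S_n|)$ by $r_n^2=o(m_n)=o(n^a)$ and $|\partial B_n|=O(n^a)$. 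Hence $\sqrt{|S_n|\lambda_n^2/m_n}\,(\hat\tau_{AB,m}(h)-E\hat\tau_{AB,m}(h))=K_n^{-1/2}\sum_{j=1}^{K_n}\zeta_{n,j}(h)+o_P(1)$ with $\zeta_{n,j}(h)=\sqrt{|B_n|\lambda_n^2/m_n}\,(\hat\tau_{AB,m}(h:B_n^{(j)})-E\hat\tau_{AB,m}(h:B_n^{(j)}))$; by stationarity the $\zeta_{n,j}(\cdot)$ are identically distributed with mean zero and, by (\ref{clt.cond.2}), $\sup_n E\|\zeta_{n,1}\|^{2+\delta}\le C_\delta<\infty$. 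Viewing $(\zeta_{n,j})_j$ as a triangular array over the block lattice, whose blocks are $r_n$‑separated, I would invoke a central limit theorem for arrays of $\alpha$‑mixing random fields in the spirit of \citet{Bolthausen} (or \citet{jenish}): the uniform $(2+\delta)$‑moment bound (\ref{clt.cond.2}) supplies the Lyapunov/uniform‑integrability hypothesis, while (\ref{irregular.numerator.integration}) and the normalised decay rate (\ref{clt.cond.3}) supply the needed control of the mixing coefficients as the block size grows, and the off‑diagonal block covariances vanish, so the limit is $\Sigma'_{h,h'}=\lim_n\mathrm{Cov}(\zeta_{n,1}(h),\zeta_{n,1}(h'))$.

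Finally, $\Sigma'$ --- and hence the stated $\Sigma=p(A)^{-2}\Sigma'$ --- is read off from $E[\hat\tau_{AB,m}(h)\,\hat\tau_{AB,m}(h')]$ via the Poisson factorial‑moment expansion: four‑point configurations recover $E\hat\tau_{AB,m}(h)\,E\hat\tau_{AB,m}(h')$ up to an $O(m_n/|S_n|)$ remainder, three‑point configurations are $O(m_n/|S_n|)$, and only the two‑point (fully diagonal) configurations carry the factor $\int w_n^2=\lambda_n^{-2}\|w\|_2^2$ and so survive the scaling, contributing --- after concentration of $w_n$ and the regular‑variation limit $\tau_{AB,m}\to\tau_{AB}$ --- entries of the form $\nu^{-2}\|w\|_2^2\,\tau_{A'B'}(h)$ that are non‑zero exactly when $h'=h$ or $h'=-h$; multiplying by $p(A)^{-2}$ gives the matrix obtained in the proof of Theorem \ref{cor:extremogramclt}. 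The main obstacle is the third paragraph: carrying the big‑block argument through in continuous space with Poisson sampling and a shrinking‑bandwidth kernel --- keeping corridor and cross‑cube pairs negligible by a careful balance of $m_n$, $r_n$, $\lambda_n$ and $|B_n|=O(n^{2a})$, and making the mixing bookkeeping precise when each block contains uncountably many sites and a growing number of points. Condition (\ref{clt.cond.2}) is assumed precisely to bypass the otherwise‑delicate $(2+\delta)$‑moment bound for a kernel‑smoothed Poisson double integral of rare‑event indicators, and the $l^{-2}$ normalisation in (\ref{clt.cond.3}) is tailored to that growth.
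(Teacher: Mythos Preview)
Your overall architecture matches the paper's: reduce to the numerator via consistency of $\hat p_m(A)$ (Proposition~\ref{irregular.denominator}), compute the limiting covariance of $\hat\tau_{AB,m}$ by the Poisson factorial-moment expansion (Proposition~\ref{irregular.numerator.1}), and prove the CLT for $\hat\tau_{AB,m}$ by a big-block/small-block decomposition (Proposition~\ref{irregular.numerator.2}). The identification of $\Sigma'$ via the seven-term expansion of $E[N^{(2)}N^{(2)}]$, with only the two fully diagonal configurations surviving the $\lambda_n^{-2}$ scaling, is exactly what the paper does.

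The genuine methodological difference is in how the block CLT is obtained. You propose to treat the block sums $\zeta_{n,j}$ as an $\alpha$-mixing array on the block lattice and invoke a Bolthausen/Jenish-type theorem directly. The paper instead carries out an explicit Bernstein reduction: it shows $\mathrm{var}(\tilde A_n-\tilde a_n)\to0$, then uses a telescoping identity for characteristic functions to compare $\tilde a_n=\sum_j\tilde a_{nj}$ with an \emph{independent} copy $\tilde a_n'=\sum_j\tilde a_{nj}'$, bounding each term by $E[\alpha_{M,M}(n^\eta)]$ where $M$ is the random Poisson count in a block (this is where condition~(\ref{clt.cond.3}) with its $l^{-2}$ normalisation enters), and finally applies Lyapunov to the independent triangular array $\tilde a_{nj}'$ using (\ref{clt.cond.2}). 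Your black-box route is conceptually cleaner, but the step that needs care is precisely that the mixing coefficients of the $\zeta_{n,j}$ involve a \emph{random} number of sites; the paper's conditioning-on-$N$ device together with Proposition~\ref{covar.upper} is what makes (\ref{clt.cond.3}) the right hypothesis, and you should expect to reproduce essentially that calculation inside any off-the-shelf mixing CLT. Note also that the paper's corridor width is a separate parameter $n^\eta$ with $6/(2+\epsilon)<\eta<\alpha<1$, not the $r_n$ of (\textbf{M2}); your identification of the two should be corrected.

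One point where you are actually more careful than the paper: the smoothing bias $E\hat\tau_{AB,m}(h)-\tau_{AB,m}(h)$. The paper's Proposition~\ref{irregular.numerator.1}(i) only proves $E\hat\tau_{AB,m}(h)\to\tau_{AB}(h)$, yet the final assembly centers at $\tau_{AB,m}(h)$; your remark that LUNC is what supplies the needed continuity of $\tau_{AB,m}(\cdot)$ near $h$ is the right observation, though obtaining the \emph{rate} $o(\sqrt{m_n/(|S_n|\lambda_n^2)})$ requires a bit more than LUNC alone.
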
 

We present the proof of Theorem \ref{theorem:clt:2} in Appendix B. As in Remark \ref{remark:bias}, ${\rho}_{AB,m}(h)$ can be replaced by ${\rho}_{AB}(h)$ if ${\rho}_{AB,m}(h)$ converges fast enough.\begin{remark} In (\ref{final}), ${\rho}_{AB,m}(h)$ can be replaced by $ {\rho}_{AB}(h)$ if
\begin{gather}
	\label{replace}  \lim_{n \rightarrow \infty}  \sqrt{ \frac{|S_n|\lambda_n^2}{m_n}}  |{\rho}_{AB,m}(h) - {\rho}_{AB}(h)| =  0 \qquad for \; h\in H.
\end{gather}
\end{remark}

\section{Examples} 
\label{section.examples}
Here we provide two max-stable processes to illustrate the results of Section \ref{section.main.result}. For background on
 max-stable processes, see \citet{deHaan(1984)} and \citet{deHaan(2007)}. In order to check conditions, we need the result from \citet{dombry}.
\begin{Proposition} [\citet{dombry}] 
\label{dombry.prop} Suppose $\{X_s, s \in S\}$ is a max-stable random field with unit Fr\'{e}chet marginals. If $S_1$ and $S_2$ are finite or countable disjoint closed subsets of $S$, and $\mathcal{S}_1$ and $\mathcal{S}_2$ are the respective $\sigma$-fields generated by each set, then
\begin{gather}
	\label{dombry} \displaystyle \beta (\mathcal{S}_1,\mathcal{S}_2) \leq 4 \sum_{s_1 \in S_1} \sum_{s_2 \in S_2} \YB{\rho_{(1, \infty)(1, \infty)} (||s_1 - s_2||)} 
\end{gather}
where $\beta (\cdot,\cdot)$ is the $\beta$-mixing coefficient. We refer to Lemma 2 in \citet{christina}.
\end{Proposition}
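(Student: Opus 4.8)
The plan is to derive \eqref{dombry} by specializing the general $\beta$-mixing bound for max-infinitely divisible random fields of \citet{dombry} to the max-stable, unit-Fr\'echet case. Recall de Haan's spectral representation: one may write $X_s=\sup_{i\ge 1}\xi_i Y_i(s)$, where $\sum_i\delta_{(\xi_i,Y_i)}$ is a Poisson random measure on $(0,\infty)\times C$ ($C$ a suitable space of nonnegative functions on $S$) with mean measure $\xi^{-2}\,d\xi\otimes P_Y(dy)$ and $E\,Y(s)=1$ for every $s$; this normalization produces unit Fr\'echet margins and gives $P(X_{s_1}\le a,\,X_{s_2}\le b)=\exp\!\big(-E_Y[\max(Y(s_1)/a,\,Y(s_2)/b)]\big)$. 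Since $S_1$ and $S_2$ are at most countable, the representation restricts harmlessly to $S_1\cup S_2$, and $X_{S_1}$, $X_{S_2}$ are measurable functions of the point configuration.

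The heart of the argument is a coupling. Each value $X_s$ is attained by a unique atom (a.s.), the \emph{extremal function} dominant at $s$; call an atom $S_j$-extremal if it is dominant at some point of $S_j$. On the complement of the event $E$ that some atom is simultaneously $S_1$- and $S_2$-extremal, the atoms governing $X_{S_1}$ and those governing $X_{S_2}$ form disjoint sub-configurations; using the hitting-scenario / regular-conditional-distribution analysis of \citet{dombry} one constructs, on a common probability space, a pair $(\hat X^{(1)},\hat X^{(2)})$ that is independent, has the correct marginal laws (those of $X_{S_1}$ and $X_{S_2}$), and coincides with $(X_{S_1},X_{S_2})$ on $E^{c}$. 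The coupling inequality then yields $\beta(\mathcal S_1,\mathcal S_2)\le (\text{const})\cdot P(E)$.

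It remains to bound $P(E)$. By a union bound $P(E)\le\sum_{s_1\in S_1}\sum_{s_2\in S_2}P(E_{s_1,s_2})$, where $E_{s_1,s_2}$ is the event that a single atom is dominant at both $s_1$ and $s_2$. A short Palm-calculus computation (the Slivnyak--Mecke formula, which lets one condition on an added atom and leave the remaining configuration unchanged in law) reduces $P(E_{s_1,s_2})$ to a bivariate quantity and identifies it with $2-\theta(\|s_1-s_2\|)$, where $\theta(h)=E[\max(Y(s_1),Y(s_2))]\in[1,2]$ is the extremal coefficient; and one checks directly from the unit Fr\'echet bivariate law above that $2-\theta(h)=\lim_{x\to\infty}P(X_h>x\mid X_{\mathbf 0}>x)=\rho_{(1,\infty)(1,\infty)}(h)$. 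Keeping track of the numerical constant through the coupling gives the factor $4$, which is \eqref{dombry}. (When the right-hand side is infinite the bound is vacuous, so a possible countable infinitude of pairs causes no difficulty.)

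The main obstacle is the coupling step: making rigorous that off $E$ the restrictions $X_{S_1}$ and $X_{S_2}$ are driven by disjoint, conditionally independent parts of the Poisson process, and producing a coupling with exactly the right marginals. This is precisely what the structural results of \citet{dombry} supply; the union bound and the identity $P(E_{s_1,s_2})=2-\theta=\rho_{(1,\infty)(1,\infty)}$ are then routine. Since the statement is quoted as Lemma~2 of \citet{christina}, in the present paper it suffices to cite that source.
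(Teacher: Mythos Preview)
Your proposal is correct and, in fact, supplies far more than the paper does: the paper gives no proof of this proposition at all, simply stating it with the attribution ``We refer to Lemma~2 in \citet{christina}.'' Your closing remark that citation suffices is exactly the paper's approach; the detailed coupling sketch you provide is a faithful outline of the argument in \citet{dombry}, but is not needed here.
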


Notice that (\ref{dombry}) provides the upper bound for $\alpha$-mixing coefficient since $2 \alpha(\mathcal{S}_1, \mathcal{S}_2) \leq \beta(\mathcal{S}_1,\mathcal{S}_2).$ See \citet{Bradley}.

\subsection{Max Moving Average (MMA)} \label{Example_MMA}
Let $\{Z_s, s \in \mathbb{Z}^2\}$ be an iid sequence of unit Fr\'{e}chet random variables. The max-moving average (MMA) process is defined by
\begin{gather}
\label{def.gen.imma}  X_t = \max_{s \in \mathbb{Z}^2} w(s) Z_{t-s},
\end{gather}
where $ w(s) > 0 \; \text{and} \; \sum_{s \in \mathbb{Z}^2} w(s)< \infty$. Note that the summability of $w(\cdot)$ implies the process is well defined. Also, notice that $a_m = O(m)$ since marginal distributions are Fr\'{e}chet. {Consider the Euclidean metric $d(\cdot, \cdot)$ and write $|| l || = d(\textbf{0},l)$ for notational convenience}. With $w(s)=I(||s|| \leq 1)$, the process (\ref{def.gen.imma}) becomes the MMA(1): $ \displaystyle X_t = \max_{||s|| \leq 1} Z_{t-s} $.  Using $ A= B= (1,\infty)$, the extremogram for the MMA(1) is then
\begin{equation} \label{MMA.extremogram}
 \rho_{AB}(h)  = \lim_{n\rightarrow \infty} P(X_{h} > a_{m_n} | X_{ \textbf{0}} > a_{m_n}) =  \left\{ \begin{array}{rll} 
		1, \; & \mbox{if} & ||h|| = 0, \\
		2/5, & \mbox{if} & ||h|| = 1, \sqrt{2}, \\
		1/5, & \mbox{if} & ||h|| = 2, \\
		0, \;  & \mbox{if} &  ||h|| >2.
	\end{array}\right.
\end{equation}
Since the process is \textit{2-dependent}, conditions for Theorem \ref{cor:extremogramclt} are easily checked. 

Figure \ref{figure:mma1} (left) shows $\rho_{AB}(h)$ and $\hat{\rho}_{AB,m}(h)$ from a realization of  MMA(1) generated by \textit{rmaxstab} in the  \textit{SpatialExtremes} package\footnote{http://cran.r-project.org/web/packages/SpatialExtremes/SpatialExtremes.pdf} in R. We use 1600 points ($\Lambda_n = \{1,...,40\}^2 \in \mathbb{Z}^2$) and set $A = B = (1, \infty)$ and $a_m = $ .97 quantile of the process. In the figure, the dots and the bars correspond to ${\rho}_{AB}(h)$ and $\hat{\rho}_{AB,m}(h)$ for observed distances in the sample. \YB{The dashed line corresponds to $0.03 \; ( = 1 - 0.97)$ and} two horizontal lines are 95\% \textit{random permutation} confidence bands to check the existence of extremal dependence (see \citet{Richard10}). The bands suggest ${\rho}_{(1,\infty)(1,\infty),m}(h) = 0$ for $h > 2$, which is consistent with (\ref{MMA.extremogram}).

\begin{figure}[t!]
\centering
 \includegraphics[width=15cm, height = 7cm]{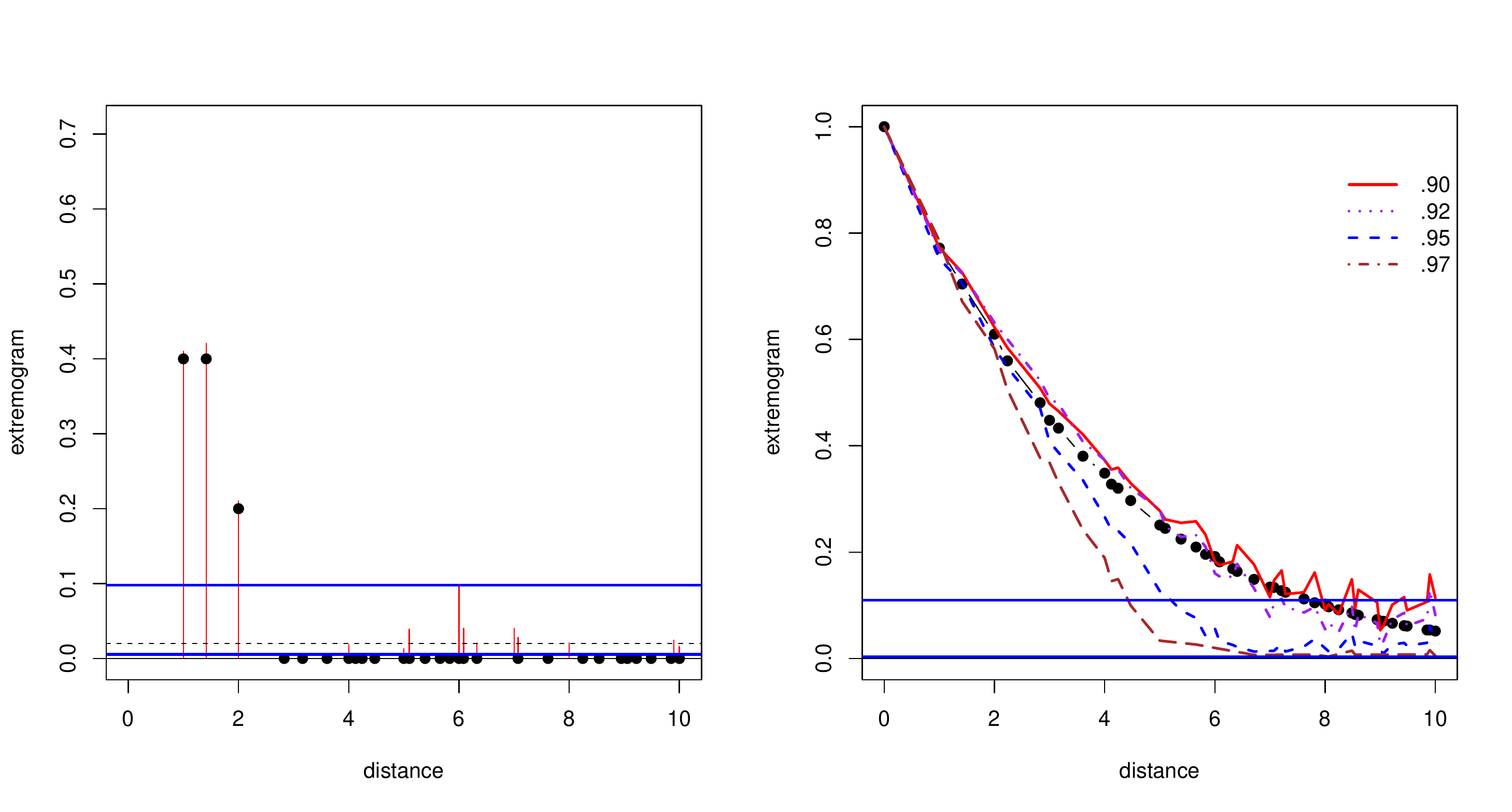}
\caption{$\rho_{AA}(h)$ and $\hat{\rho}_{AA,m}(h)$, where $A=(1, \infty),$ from a realization of an MMA(1) (left) and the process (\ref{def.imma}) (right). For the ESE, $a_m = $ .97  (left) and $a_m = $  (.90,.92,.95,.97) quantile (right) are used. For both cases, the ESE closely tracks the extremogram. Two horizontal lines are 95\% \textit{random permutation} confidence bands.}
 \label{figure:mma1}
 \end{figure}

Now consider $w(s) = \phi^{||s||}$ where $0 < \phi  <1$. Then the process (\ref{def.gen.imma}) becomes
\begin{gather}
\label{def.imma}   {X_t = \max_{s \in \mathbb{Z}^2} \phi^{||s||} Z_{t-s} \quad \text{for} \quad \sum_{l \in \mathbb{Z}^2} \phi^{|| l ||} = \sum_{ 0 \leq ||l|| < \infty} {\phi^{||l||} p(||l||)} < \infty, }
\end{gather}
\YB{where $p(||l||)= \# \{s \in \mathbb{Z}^2: d(\textbf{0}, s) = ||l|| \}$}. Observe that the process (\ref{def.imma}) is istotropic and that $p(||l||) = O(||l||)$ \YB{from Lemma A.1 in \citet{jenish}}, and  
\begin{eqnarray} 
 P(X_t \leq x)  &  = & \exp{ \Big\{  - \frac{1}{x} \sum_{ 0 \leq ||l|| < \infty} {\phi^{||l||} p(||l||)}  \Big\} }, \label{imma.joint.prob.1} \\
 P(X_{ \textbf{0} } \leq x, X_{h} \leq x )  & = &   \exp{ \Big\{  -  \frac{1}{x}  \sum_{s \in \mathbb{Z}^2}  \max{ ( \phi^{||s||}, \phi^{||h+s||})}  \Big\}} \nonumber \\
		&  = & \exp{ \Big\{  -  \frac{1 }{x}  \sum_{ 0 \leq ||l|| < \infty}  \phi^{||l||} q(||l||)   \Big\},}   \label{imma.joint.prob.2}
\end{eqnarray}
\YB{where $ q(||l||)= \# \{s \in \mathbb{Z}^2: \min (||s||, ||h+s|| )= ||l|| \}$, the number of observations with minimum distance to $\textbf{0}$ or $h$ equals $||l||$.  For a  given $h$, if $ ||l|| < \frac{||h||}{2}$, there are $ p(||l||)$ pairs from both $\textbf{0}$ and $h$ while $q(||l||)/ p(||l||) \rightarrow 1 $ as $||l|| \rightarrow \infty$. In other words,}
\begin{center} 
$q(||l||) = 2 p(||l||)$ for $ ||l|| < \frac{||h||}{2}$ and $ \displaystyle \lim_{ ||l|| \rightarrow \infty} \; \frac{q(||l||)}{ p(||l||)} = 1$.
\end{center} 
Using the joint distribution in (\ref{imma.joint.prob.2}) and a Taylor series expansion, the extremogram with $A=B=(1,\infty)$ is
\begin{gather} \label{def.imma.extremo}
 \rho_{(1, \infty)(1, \infty)}(h) =   \frac{  \sum_{\frac{||h||}{2} \leq ||l|| < \infty} {\phi^{||l||} [ 2 p(||l||) - q(||l||)] } }{ \sum_{0 \leq ||l|| <  \infty} {\phi^{||l||} p(||l||)} }.
\end{gather}

\begin{example} \label{example1.imma}
For the process (\ref{def.imma}), the conditions (\ref{cond.2})-(\ref{cond.3}) in Theorem  \ref{cor:extremogramclt} are satisfied if $ r_n^2=o(m_n), \log m_n = o(r_n)$ and $ \log n = o(m_n)$.
\end{example}
\begin{proof}
Observe that (\ref{def.imma}) is isotropic. \YB{By Lemma A.1 in \citet{jenish}}, $p(||l||) = O(||l||).$ Thus, (\ref{dombry}) implies that
\begin{center}
$\alpha_{c,c}(k) \leq const \int_{\frac{k}{2}}^{\infty} j \phi^j dj =  O(k \phi^{k}) $ for any $k>0$. 
\end{center}
Then (\ref{cond.1}) is satisfied  if $\log m_n = o(r_n)$ since 
\begin{center}
$ \displaystyle  m_n \sum_{ l \in \mathbb{Z}^2, r_n < ||l||} \alpha_{c,c }(||l||)  = m_n \sum_{r_n < ||l||} p(||l||) \alpha_{c,c}(||l||)      =  O \left(  m_n   \;   r_n^2 \phi^{r_n} \right).$
\end{center} 
Similarly, (\ref{cond.000}) can be shown. If $\log n = o(m_n)$, (\ref{cond.3}) holds since (\ref{dombry}) implies
\begin{center}
$ \displaystyle n^{d/2} m_n^{1/2} \alpha_{c,c n^d} (m_n)   \leq const \;  n^{3d/2}  m_n^{1/2}    m_n \phi^{m_n}. $ 
\end{center}
{Turning to (\ref{cond.2}), notice from (\ref{imma.joint.prob.1}) and (\ref{imma.joint.prob.2}) that 
\begin{eqnarray*}
 P\left( \max_{\textbf{s} \in B_{\gamma}} |X_{\textbf{s}}| >  {\epsilon a_m} ,    \max_{ \textbf{s}' \in  B_{\gamma} + l }|X_{\textbf{s}'}| >  {\epsilon a_m}  \right)  & \leq  &
 \sum_{ s \in B_{\gamma}} \sum_{s' \in  B_{\gamma} + l }  P\left( X_{\textbf{s}} >  {\epsilon a_m} ,  X_{\textbf{s}'} >  {\epsilon a_m}  \right)\\
& \leq &  \sum_{ s \in  B_{\gamma}} \sum_{s' \in  B_{\gamma} + l }   \left[ \frac{const}{\epsilon a_m}  \; \sum_{\frac{d(s,s') }{2} \leq  j < \infty} {\phi^{j} j }  + O \left(\frac{1}{a_m^2} \right) \right]\\
& \leq & const \; \frac{\phi^{||l||} ||l|| }{\epsilon a_m}   + O \left(\frac{1}{a_m^2} \right).
 \end{eqnarray*}
Hence the term in (\ref{cond.2}) is bounded by  
\begin{eqnarray*}
\limsup_{n \rightarrow \infty} m_n \sum_{l \in \mathbb{Z}^2, k < ||l|| \leq r_n} \left[ const \; \frac{\phi^{||l||} ||l|| }{\epsilon a_m}   + O \left(\frac{1}{a_m^2} \right) \right]  =   \sum_{ k < ||l|| < \infty} const \;  {\phi^{||l||} ||l||^2 }   + \limsup_{n \rightarrow \infty}  O \left(\frac{m_n r_n^2}{a_m^2} \right),
\end{eqnarray*}
where the second term is 0 \YB{since $a_m = O(m_n)$ and $r_n^2=o(m_n)$.} Now letting $k \rightarrow \infty$, we obtain  (\ref{cond.2}).}
\end{proof}

Figure \ref{figure:mma1} (right) shows $\rho_{AB}(h)$ and $\hat{\rho}_{AB,m}(h)$ from a realization of the process (\ref{def.imma}) with $\phi = 0.5$. Here, $A = B = (1, \infty)$ and $a_m = $ (.90,.92,.95,.97) quantiles. The dots are ${\rho}_{AB}(h)$ and the dashed lines are $\hat{\rho}_{AB,m}(h)$ with different $a_m$. The ESE with $a_m = .90$ and $.92$ are close to the extremogram for all observed distances while the ESE with $a_m = .95$ and $.97$ quantiles decay faster for the observed distances greater than 3. The two horizontal lines are 95\% confidence bands based on random permutations.

\subsection{Brown-Resnick process} \label{Example_BR}
{We begin with the definition of the Brown-Resnick process with Fr\'{e}chet marginals. Details can be found in \citet{kabluchko} or \citet{christina}. Consider a stationary Gaussian process $\{ Z_s, s \in \mathbb{R}^d \}$ with mean 0 and variance 1 and use $\{ Z_s^j, s \in \mathbb{R}^d \}, j \in 1,...,n,$ to denote independent replications of $\{ Z_s, s \in \mathbb{R}^d \}$. For the correlation function $\rho(h) = E[Z_s Z_{s+h}]$, assume that there exist sequences $d_n \rightarrow 0$ such that 
\begin{gather*}
\log (n) \{ 1 -  \rho(d_n h) \} \rightarrow \delta(h) >0, \quad \text{as} \quad n \rightarrow \infty. 
\end{gather*}
Then, the random fields defined by
\begin{gather} \label{Def.BR}
 X_s (n)  =  \frac{1}{n} \bigvee_{i = 1}^{n} - \frac{1}{\log \left( \Phi ( Z^i_s) \right) }, \qquad  s \in \mathbb{R}^d, n \in \mathbb{N},
\end{gather}
converge weakly in the space of continuous function to the stationary Brown-Resnick process
\begin{eqnarray} \label{Def.MS}
X_s = \sup_{j \geq 1} \Gamma_j^{-1} Y^j_s = \sup_{j \geq 1} \Gamma_j^{-1}   \exp\{W_s^j - \delta(s)\} , \qquad s \in \mathbb{R}^d,
\end{eqnarray}
where  $(\Gamma_i)_{i \geq 1}$ is an increasing enumeration of a unit rate Poisson process, $\{Y_s^j, s\in \mathbb{R}^d\} , j \in \mathbb{N},$ are iid sequences of random fields independent of $(\Gamma_i)_{i \geq 1}$, and $\{ W_s^j,  s \in \mathbb{R}^d \}, j \in \mathbb{N}, $ are independent replications of a Gaussian random field with stationary increments, $W_{\textbf{0}} = 0 \; \text{and} \; E[W_s] = 0$ and covariance function by $ \text{cov}(W_{s_1}, W_{s_2}) = \delta(s_1) + \delta(s_2) - \delta(s_1-s_2)$. Here, $\Phi$ is the cumulative distribution function of $N(0,1)$. }

\begin{figure} [t!]
\centering
 \includegraphics[width=15cm, height = 7cm]{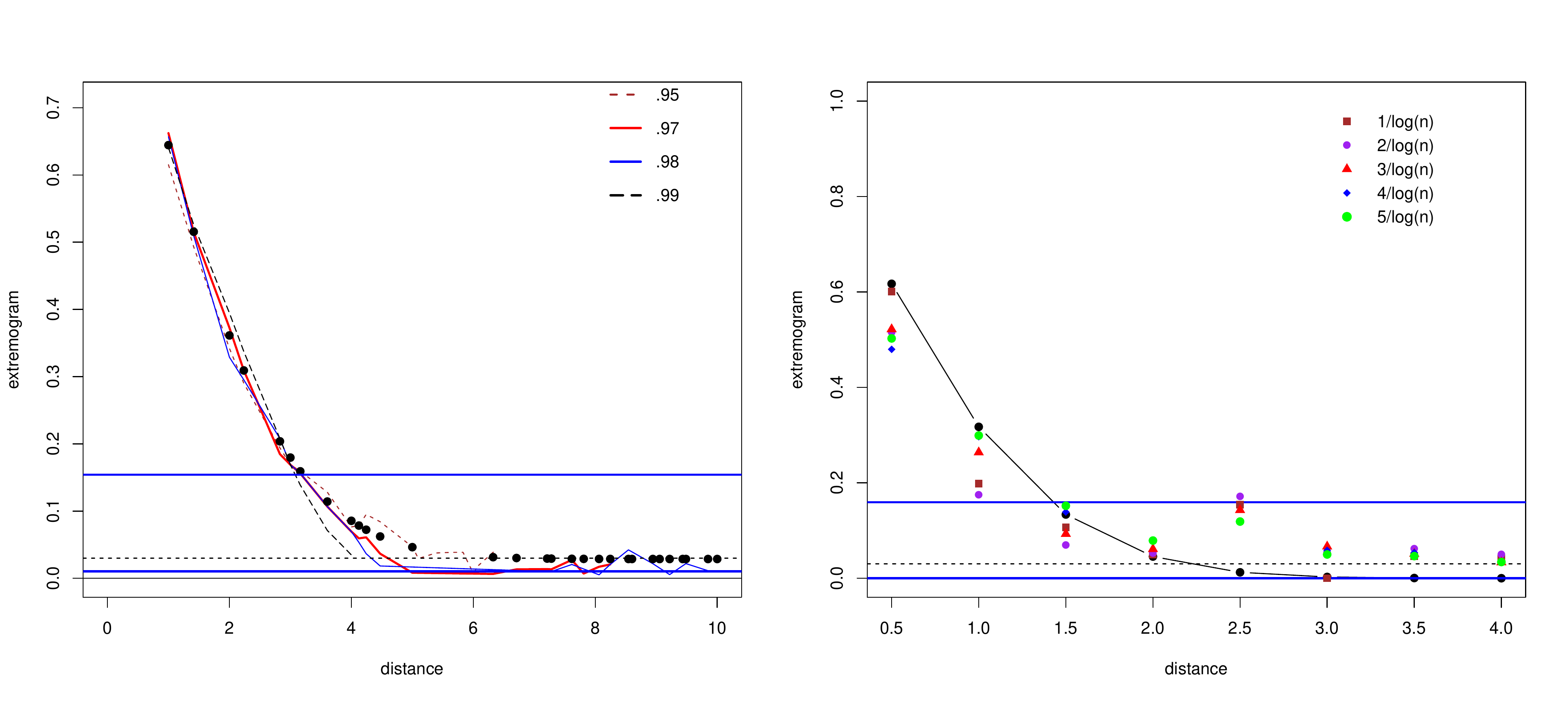}
\caption{$\hat{\rho}_{(1,\infty)(1,\infty),m}(h)$ from a realization of Brown-Resnick process on lattice (left) and non-lattice (right). For lattice case, the ESE with $a_m=(.95,.97,.98,.99)$ upper quantiles are presented. For non-lattice case, the ESE with different bandwidths, $\frac{c}{\log n}$ with $c = 1,2,3,4,$ and $5$, are displayed. Two horizontal lines are 95\% \textit{random permutation} confidence bands.}
\label{figure:smith1_BR1}
 \end{figure}

The extremogram for the Brown-Resnick process $\{ X_s,   s \in \mathbb{R}^d \}$ with $A = (c_A, \infty)$ and $B = (c_B, \infty)$ is 
\begin{gather} \label{extremogram.br}
\rho_{AB}( {h}) =    \bar{\Phi}_{c_A,c_B} \left( \delta(h) \right)  + \frac{c_A}{c_B} \bar{\Phi}_{c_B,c_A} \left( \delta(h) \right),
\end{gather}
where $ \Phi_{y_1,y_2} \left( \delta(h) \right)  = \Phi\left( \frac{\log(y_2/y_1)}{2 \sqrt{\delta(h)}} + \sqrt{\delta(h)} \right) $. To see (\ref{extremogram.br}), recall from \citet{husler} that 
\begin{eqnarray*}
F( y_1, y_2) := P(X_{\textbf{0}} \leq y_1,  X_h \leq y_2) =  \exp \left\{ -\frac{1}{y_1} \Phi\left( \frac{\log(y_2/y_1)}{2 \sqrt{\delta(h)}} + \sqrt{\delta(h)} \right)  -\frac{1}{y_2} \Phi\left( \frac{\log(y_1/y_2)}{2 \sqrt{\delta(h)}} + \sqrt{\delta(h)} \right) \right\}.
\end{eqnarray*}
\YB{As $a_m = O(m_n)$, we assume without loss of generality that $ \lim_{n \rightarrow \infty} \frac{m_n}{a_m} =1$. Then we have $p_m(A) = m_n \left( 1 - e^{ - \frac{1}{c_A a_m} } \right) = \frac{m_n}{c_A a_m} +  O(\frac{m_n}{a_m^2}) \rightarrow \frac{1}{c_A} = \mu(A) $ and}
	\begin{eqnarray}
	\tau_{AB,m}(h) =  m_n \left[ 1 - e^{ - \frac{1}{c_A a_m} }  -  e^{ - \frac{1}{c_B a_m} } + F(a_m c_A, a_m c_B)  \right]  
		 \rightarrow  \frac{1}{c_A} \bar{\Phi}_{c_A,c_B} (\delta(h)) + \frac{1}{c_B} \bar{\Phi}_{c_B,c_A} (\delta(h)),\label{BR.calculation}
	\end{eqnarray}
which proves (\ref{extremogram.br}).

\YB{Similar to Lemma 2  in \citet{christina}, $\alpha$-mixing coefficient of the process is bounded by}
\begin{gather} \label{BR.alpha.bound}
\alpha_{m,n}(||h||) \leq const  \; \sup_{l \geq  ||h||}  \frac{1}{\sqrt{\delta(l)}}  e^{- \delta(l)/2}.
\end{gather}
{In the following examples, the correlation function $\rho(h)$ of a Gaussian process $\{Z_s, s \in \mathbb{R}^d \}$ is assumed to have an expansion around zero as 
\begin{gather}
\rho(h) = 1 - \theta ||h||^{\alpha} + o(||h||^{\alpha}), \quad h \in \mathbb{R}^d,  \label{cond.corr.BR}
\end{gather}
where $\alpha \in (0,2]$ and $\theta >0$. For this choice of correlation function, we have $\delta(h) = \theta ||h||^{\alpha}$ as mentioned in \citet{christina}, Remark 1.

\begin{example} 
\label{example.br.1} Consider the Brown-Resnick process  $\{ X_s , s \in \mathbb{Z}^d\}$ with $\delta(h) =  \theta||h||^{\alpha}$ for $ 0 < \alpha \leq 2$ and $\theta >0$. The conditions of Theorem \ref{cor:extremogramclt} hold if $\log n = o(m_n^{\alpha}), \log m_n = o(r_n^{\alpha})$ and $r_n^d/m_n  \rightarrow 0$. {In this case, (\ref{bias}) is not satisfied for $d>0$}.
\end{example}
\begin{proof} 
From (\ref{BR.alpha.bound}), we have $\alpha_{c,  c} (||h||)   \leq  const \; ||h||^{-\alpha /2}  e^{- \theta||h||^{\alpha} /2}$. If $ \log m_n = o(r_n^{\alpha})$, (\ref{cond.1}) holds since
\begin{eqnarray*}
\displaystyle m_n \sum_{ l \in \mathbb{Z}^d, r_n \leq ||l||}   \alpha_{c,c }(||l||) \leq const \; m_n \sum_{r_n \leq ||l|| < \infty}||l||^{d-1} \alpha_{c,c }(||l||)    \leq  const   \; m_n  \sum_{r_n \leq ||l|| < \infty} ||l||^{d-1-\alpha/2}  e^{- \theta || l||^{\alpha} /2 } \rightarrow 0.  
\end{eqnarray*}
Similarly, (\ref{cond.000}) can be checked. For (\ref{cond.3}), Proposition \ref{dombry.prop} implies that
\begin{center}
$ \displaystyle  n^{d/2} m_n^{1/2} \alpha_{c ,c n^d} (m_n)  \leq const \;  n^{3d/2}  m_n^{(1-\alpha)/2}    \exp\{-  \theta m_n^{\alpha}/2\}$
\end{center}
which converges to 0 if $\log n = o(m_n^{\alpha})$. Showing (\ref{cond.2}) is similar to Example \ref{example1.imma}. From (\ref{BR.calculation}), 
\begin{eqnarray*}
 P\left( \max_{\textbf{s} \in B_{\gamma}} |X_{\textbf{s}}| >  {\epsilon a_m} ,    \max_{ \textbf{s}' \in B_{\gamma} + l }|X_{\textbf{s}'}| >  {\epsilon a_m}  \right)  
& \leq & const \; \frac{ \bar{\Phi}_{(1,\infty), (1,\infty)} (\sqrt{\delta(||l||)} )}{\epsilon a_m}   + O \left(\frac{1}{a_m^2} \right).
 \end{eqnarray*}
Hence the term in (\ref{cond.2}) is bounded by  
\begin{eqnarray*}
& & \limsup_{n \rightarrow \infty} \sum_{ l \in \mathbb{Z}^d, k <||l|| \leq r_n} \left[ const \; m_n \frac{ \bar{\Phi}_{(1,\infty), (1,\infty)} (\sqrt{\delta(||l||)} )}{\epsilon a_m}   + O \left(\frac{1}{a_m^2} \right) \right] \\
 & & \qquad \qquad \qquad \qquad  \qquad \qquad \qquad   \leq    const \;\sum_{k <||l|| < \infty}  ||l||^{d-1}  e^{-  \frac{\theta||l||^{\alpha}}{2}} + \limsup_{n \rightarrow \infty} O \left(\frac{r_n^d m_n}{a_m^2} \right),
\end{eqnarray*}
where the second term is 0 since $r_n^d=o(m_n)$. Letting $k \rightarrow \infty$, (\ref{cond.2}) is obtained. 

\YB{For the last statement in Example \ref{example.br.1}, to show (\ref{bias}) not hold, note that $ \displaystyle| \rho_{AB,m}(h) - \rho_{AB}(h) |  = O \left({1}/{m_n} \right)$ from (\ref{BR.calculation}) and a Taylor series expansion and that $a_m= O(m_n)$ and $\log n = o(m_n^{\alpha})$.}
\end{proof} 

In Figure \ref{figure:smith1_BR1} (left), we have $\rho_{AB,m}(h)$ and $\hat{\rho}_{AB,m}(h)$ from a realization of the Brown-Resnick process with $\delta(h)=  \frac{2}{9} ||h||^2 $. We use 1600 points ($\{1,...,40\}^2 \in \mathbb{Z}^2$) to compute the extremogram with $ A = B = (1, \infty)$ and  $a_m = (.95, .97, .98, .99)$ upper quantiles. The extremogram is marked by dots and the ESE with different line types corresponding to various choices of $a_m$. From the figure, the ESE is not overly sensitive to different $a_m$, but $\hat{\rho}_{(1,\infty)(1,\infty),m}(h)$ with $a_m$ = 0.97 quantile looks most robust. Also the extremal dependence seems to  disappear for $h > 4$ based on the random permutation bands (two horizontal lines).

\begin{example}\label{example.br.2} Consider the Brown-Resnick process $\{X_s, s \in \mathbb{R}^2\}$ with $\delta(h) = \theta ||h||^{\alpha}$ for $\alpha \in (0,2]$ and $\theta>0$. Assume that $\log m_n = o(r_n^{\alpha})$ and
\begin{gather}
\sup_n \frac{ \lambda_n^2  n^{2 a} }{ m_n} < \infty \quad \text{and} \quad  \sup_n \frac{  m_n}{\lambda_n^2 n^{2 a}}< \infty \quad \text{for} \quad 0 < a < 1.
 \label{M000}  
\end{gather}
Then Theorem \ref{theorem:clt:2} applies. Furthermore, (\ref{replace}) holds if $\frac{|S_n| \lambda_n^2}{m_n^3} \rightarrow 0$.  \YB{See Appendix C for the proof.}
\end{example}

\begin{remark}
\YB{Using a similar change of variable technique, as in the proof of Proposition \ref{irregular.numerator.1}, one can verify that condition (\ref{M000}) implies (\ref{clt.cond.2}) with $\delta=1$. We omit the details.} One of the choices that satisfies condition (\ref{M000}) and $\frac{|S_n| \lambda_n^2}{m_n^3} \rightarrow 0$ is $ a=\frac{7}{12},\lambda_n = n^{-1/3}$ and $m_n = n^{1/2}$.
\end{remark}

\YB{To simulate the Brown-Resnick process in $\mathbb{R}^2$, we use \textit{RPbrownresnick} in the \textit{RandomFields} package \footnote{http://cran.r-project.org/web/packages/RandomFields/RandomFields.pdf} in R.} 
Here, we consider $\delta(h)=  0.5 ||h||^2.$  \YB{In each simulation, first we generate 1600 random locations in $\{1,...,40\}^2,$ where the process is simulated with the scale of $\left( {1}/{\log (1600) } \right)^{1/a}$ and $\displaystyle \rho( \cdot )= ( 1 +  c \; ||\cdot||^{a})^{-1}$ with $c = 1$ and $a =2$.} For the ESE computation, we use $A = B = (1, \infty)$, $a_m = $ .97 upper quantile. We set  $w(\cdot) = I_{[- \frac{1}{2},  \frac{1}{2}]}(\cdot)$, and distances $h = (0.5,1,...,4.5,5)$. In Figure \ref{figure:smith1_BR1} (right), the extremogram and ESE from one realization are displayed. The extremogram ${\rho}_{AB}(h)$ corresponds to connected solid circles and $\hat{\rho}_{AB,m}(h)$ for different bandwidths $  \lambda_n$ are displayed in different point types. As will be seen in Section \ref{sim.study}, smaller variances and larger biases are observed for a larger bandwidth. The two horizontal lines are the random permutation bands. 

\subsection{Simulation study} \label{sim.study}
We use a simulation experiment to examine performances of the ESE. Samples are generated from models with Fr\'{e}chet marginals for both lattice and non-lattice cases. For lattice cases, we consider MMA(1) and the Brown-Resnick process with $\delta(h)= 0.5 ||h||^2$. In each simulation, $\hat{\rho}_{AB,m}(h)$  with $A = B = (1, \infty)$ and $a_m = $ .97 upper quantile is calculated for observed distances less than 10. This is repeated 1000 times.

\begin{figure} [t!]
\centering
 \includegraphics[width=14cm, height = 14cm]{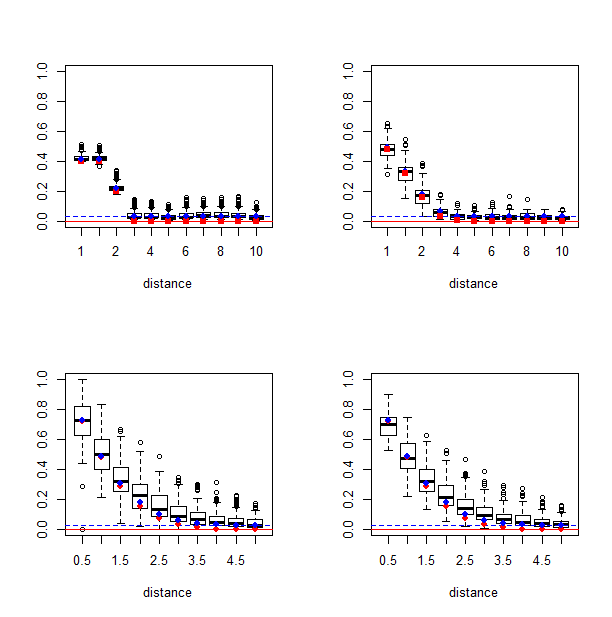}
\caption{The distribution of the ESE for MMA(1) on lattice (upper left, 1000 simulations); the Brown-Resnick process on lattice (upper right, 1000 simulations); on $\mathbb{R}^2$ with $\displaystyle \lambda_n = {1}/{\log{n}}$ (bottom left, 100 simulations); and  $\displaystyle \lambda_n =  {5}/{\log{n}}$ (bottom right, 100 simulations). The solid squares are the extremogram. For MMA(1), we see the ESE is centered around PA extremogram (solid circles). For the Brown-Resnick process on $\mathbb{R}^2$, we see  the impact of bandwidths on the ESE.}
\label{figure:boxplotall}
 \end{figure}

{Figure \ref{figure:boxplotall} (upper left) shows the distributions of $\hat{\rho}_{AB,m}(h)$ (box plots), ${\rho}_{AB}(h)$ (solid squares) and ${\rho}_{AB,m}(h)$ (solid circles) for MMA(1). In the figure, we see the distributions are centered at ${\rho}_{AB,m}(h)$, not ${\rho}_{AB}(h)$. Notice that ${\rho}_{AB,m}(h)$ for MMA(1) is computed by

\begin{eqnarray*}
P(X_h > a_m | X_{\textbf{0}} > a_m) & = & \frac{  1 - 2 P(X_{\textbf{0}}  \leq  a_m) +  P(X_h \leq  a_m , X_{\textbf{0}}  \leq  a_m)  }{P( X_{\textbf{0}} > a_m)}\\
 & = &  \begin{array}{ll} \frac{  \frac{2}{m}-1+ (1 - \frac{1}{m})^{8/5} }{1 / m} & \text{ for } ||h|| = 1, \sqrt{2}, \\
	 \frac{  \frac{2}{m}-1+ (1 - \frac{1}{m})^{9/5} }{1 / m}  & \text{ for }  ||h|| = 2, \\
	 \frac{1}{m}  & \text{ for }  ||h|| > 2 . \end{array}  
\end{eqnarray*}
using $P(X > a_m) = \frac{1}{m}$ and $P(X \leq x) = e^{-5/x}$ for $x>0,$ and $m =  0.03^{-1}.$

{The upper right panel of the figure presents the distributions of the ESE with ${\rho}_{AB}(h)$ (solid squares) and ${\rho}_{AB,m}(h)$ (solid circles)  for the Brown-Resnick process on the lattice. The derivation of $\hat{\rho}_{AB,m}(h)$ is from (\ref{BR.calculation}). Again, the ESE is centered around PA extremogram.  

The bottom panels of Figure \ref{figure:boxplotall} are based on the simulation results from the Brown-Resnick process in the non-lattice case. For each simulation, 1600 points are generated from a Poisson process in $\{1,...,40\}^2$, from which $\hat{\rho}_{AB,m}(h)$ for $h = (0.5,1,...,4.5,5)$ is computed using the bandwidths $  \lambda_n = {1}/{\log{n}}$ and  $ 5/\log{n}$. This is repeated 100 times. Notice that the ESE using $\lambda_n = {1}/{\log{n}}$ has generally smaller bias but larger variance compared to the ESE using $\lambda_n = {5}/{\log{n}}$ for $h \leq 2$. For longer lags, the differences is not apparent. This indicates that the ESE with wider bandwidths tends to have smaller variance but larger biases.} 


\begin{figure} [t!]
\centering
 \includegraphics[width=6cm, height = 5cm]{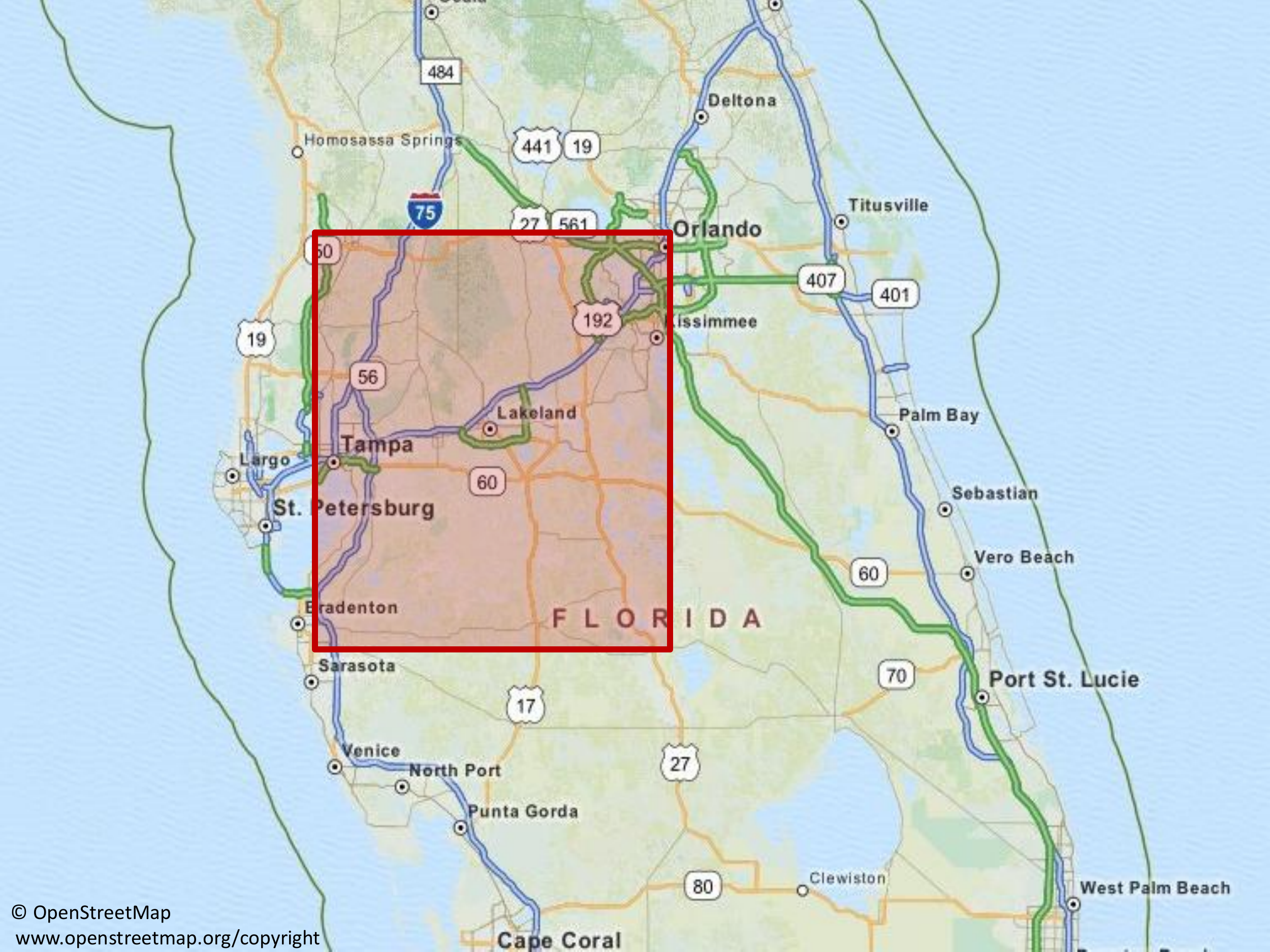}
\caption{The region of Florida rainfall data.}
\label{figure:map}
 \end{figure}

\begin{figure} [t!]
\centering
 \includegraphics[width=15cm, height = 7cm]{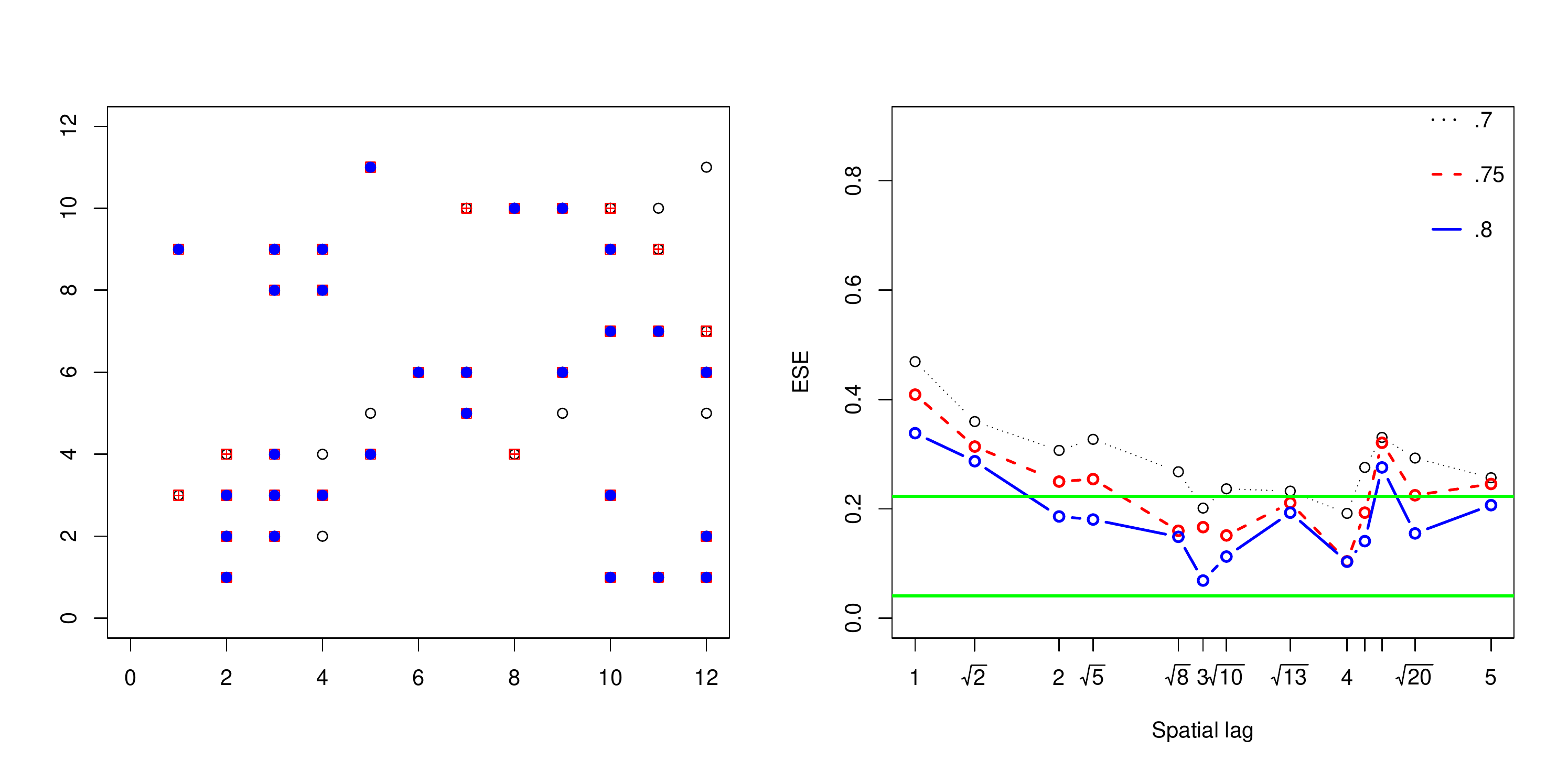}
\caption{The locations of extremes (left) and the ESE (right) using the 6 year maxima of Florida rainfall data.  For example, the ESE with 0.80 upper quantile (solid line, right) is based on the locations of corresponding extremes (solid circles, left). The ESE using the 0.70 upper quantile indicates that no spatial extremal dependence for  lags larger than 3.}
\label{figure:application_rst_1}
 \end{figure}

\section{Application} 
\label{section.application}
In this section, we apply the ESE to analyze geographical dependence of heavy rainfall in a region in Florida. The source is Southwest Florida Water Management District. The raw data is total rainfall in 15 minute intervals from 1999 to 2004, measured on a 120 $\times$ 120 (km)$^2$ region containing 3600 grid locations. The region of the measurements is shown in Figure \ref{figure:map}. For each fixed time, we first calculate the spatial maximum over a non-overlapping block of size 10 $\times$ 10 (km)$^2$, which provides a 12 $\times$ 12 grid of spatial maxima. Then, we calculate the annual maxima from 1999 to 2004 and the 6 year maxima from the corresponding time series for each spatial maximum. The 7 spatial data sets on a 12 $\times$ 12 grid under consideration consist of  annual maxima and 6 year maxima of spatial maxima. Since the data are constructed as a maxima over a spatial grid of 25 locations and a temporal resolution of 15 minutes intervals, it is not unreasonable to view these 7 spatial data sets as realizations from a max-stable process.  

We first look at the spatial extremal dependence for 6 year maxima rainfall. In Figure \ref{figure:application_rst_1}, the locations of extremes (left) and the ESE (right) are displayed, where the ESE is computed using $A=B=(1, \infty)$ and $a_m =$ .70 (dotted line), .75 (dashed line) and .80 (solid line) upper quantiles. Since the number of spatial locations is small (144), we chose modest thresholds in order to ensure enough exceedances for estimation of the ESE. Such thresholds should provide good estimates of the pre-asymptotic extremogram for a max-stable process. The locations of extremes are marked corresponding to choices of $a_m$ by  .70 (empty circles), .75 (empty squares) and .80 (solid circles) upper quantiles. For the ESE plot, the horizontal lines are permutation based confidence bands. For example, if extreme events are defined by any rainfall heavier than the $.70$ upper quantile of the maxima rainfall observed for the entire periods, there is a significant extremal dependence between two clusters at distance 2. On the other hand, using the $0.80$ upper quantile, the extremal dependence at the same distance is no longer significant. In the case of 6 year maxima rainfall, the ESE from the 0.70 upper quantile indicates that no spatial extremal dependence for spatial lags larger than 3. \YB{A small spike of the ESE at spatial lags around 4 may be the result of two extremal clusters that are 4 units apart, as seen in the left panel of Figure \ref{figure:application_rst_1}.}

By looking at the ESE of annual maxima rainfall from 1999 to 2004, we see year-over-year changes in spatial extremal dependence. Figure \ref{figure:application_rst_2} presents the locations of extremes and the ESE from 1999 to 2004 (left to right, top to bottom). For example, the ESE suggests that the spatial extremal dependence for lags less than 3 in 2000 is stronger than at any other year between 1999 and 2004. Using the .80 upper quantile, there is  significant extremal dependence for spatial lag $\sqrt{8}$ in 2000, but not for any other years. In 2002, the spatial extremal dependence is not significant at lag $\sqrt{8}$ using the .80 upper quantile. Similarly, the year-to-year comparisons of the ESE with 0.70 and 0.75 upper quantiles confirm that the spatial extremal dependence for spatial lags up to 3 is stronger in 2000 than in any other years.

\begin{figure} [t!]
\centering
 \includegraphics[width=12cm, height = 16cm]{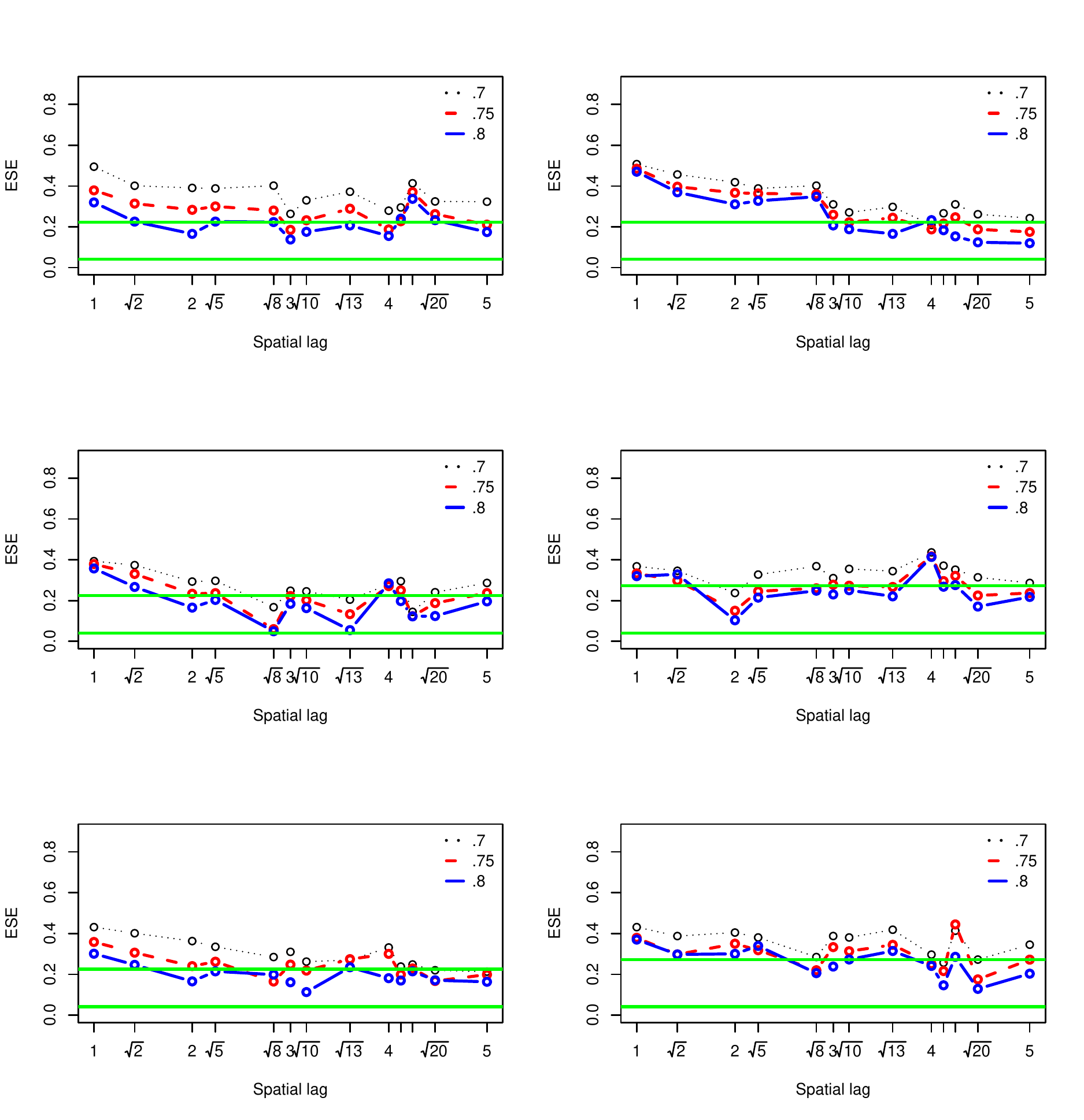}
\caption{The ESE of the annual maxima of Florida rainfall from 1999 to 2004 (left to right, top to bottom). The ESE indicates that the spatial extremal dependency for spatial lags less than 3 is the strongest in 2000.}
\label{figure:application_rst_2}
 \end{figure}

\section{Appendix: Proofs} \label{Section: Appendix}
{The following proposition presented by \citet{lgs} is used in the proof. The proposition is analogous to Theorem 17.2.1 in \citet{ibragimov(1971)}.
\begin{Proposition} [Lemma A.1. in \citet{lgs}] \label{covar.upper}
Let $U$ and $V$ be two closed and connected sets in $ \mathbb{R}^d$ such that $ \# U=\# V \leq b$ and $d(U,V) \geq r$ for some constants $b$ and $r$. \YB{For a stationary process $X_s$, consider $\xi$ and $\eta$  measurable random variables with respect to $ \sigma(X_s: s\in U)$ and $ \sigma(X_s: s \in V)$} with $|\xi| \leq C_1, |\eta| \leq C_2$. Then $ |\text{cov}(\xi, \eta)| \leq 4 C_1 C_2 \alpha_{b,b}(r)$.
\end{Proposition}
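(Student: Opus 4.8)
The plan is to derive the statement from the classical covariance inequality for $\alpha$-mixing $\sigma$-fields and then to read off the mixing bound from the geometry of $U$ and $V$. Write $\mathcal{A}=\sigma(X_s:s\in U)$ and $\mathcal{B}=\sigma(X_s:s\in V)$, so that $\xi$ is $\mathcal{A}$-measurable and $\eta$ is $\mathcal{B}$-measurable. Since the definition of the mixing coefficient gives $\alpha(\mathcal{A},\mathcal{B})\le\alpha_{b,b}(r)$ whenever $\#U\le b$, $\#V\le b$ and $d(U,V)\ge r$, it suffices to prove the abstract bound $|\operatorname{cov}(\xi,\eta)|\le 4C_1C_2\,\alpha(\mathcal{A},\mathcal{B})$ for any $\mathcal{A}$-measurable $\xi$ with $|\xi|\le C_1$ and any $\mathcal{B}$-measurable $\eta$ with $|\eta|\le C_2$. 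This is Theorem 17.2.1 in \citet{ibragimov(1971)}; I would include a short self-contained argument.

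I would prove the abstract bound in three steps. First, reduce to simple functions: approximate $\xi$ uniformly by $\mathcal{A}$-measurable simple functions $\xi=\sum_i a_iI_{A_i}$ with the $A_i$ disjoint and $|a_i|\le C_1$, and likewise $\eta=\sum_j b_jI_{B_j}$ with the $B_j$ disjoint and $|b_j|\le C_2$; the general bounded case then follows by bounded convergence once the bound holds for simple functions. Second, estimate $\operatorname{cov}(\xi,\eta)$ by a two-fold "signed indicator" reduction. Setting $d_i=E[I_{A_i}(\eta-E\eta)]$ gives $\operatorname{cov}(\xi,\eta)=\sum_i a_id_i$, hence $|\operatorname{cov}(\xi,\eta)|\le C_1\sum_i|d_i|$; and $\sum_i|d_i|=\operatorname{cov}(g,\eta)$ where $g=\sum_i\operatorname{sgn}(d_i)I_{A_i}$ is $\mathcal{A}$-measurable with $|g|\le 1$. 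Repeating on the other side with $e_j=E[(g-Eg)I_{B_j}]$ and $h=\sum_j\operatorname{sgn}(e_j)I_{B_j}$ gives $\operatorname{cov}(g,\eta)\le C_2\sum_j|e_j|=C_2\operatorname{cov}(g,h)$, with $h$ being $\mathcal{B}$-measurable and $|h|\le 1$. Third, since $g$ and $h$ now take values in $\{-1,0,1\}$, I write $g=I_{G^+}-I_{G^-}$ and $h=I_{H^+}-I_{H^-}$ with $G^\pm\in\mathcal{A}$ and $H^\pm\in\mathcal{B}$; expanding $\operatorname{cov}(g,h)$ into four covariances of indicators, each at most $\alpha(\mathcal{A},\mathcal{B})$ in absolute value by the definition of $\alpha(\cdot,\cdot)$, gives $|\operatorname{cov}(g,h)|\le 4\alpha(\mathcal{A},\mathcal{B})$. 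Chaining the three steps yields $|\operatorname{cov}(\xi,\eta)|\le C_1C_2\operatorname{cov}(g,h)\le 4C_1C_2\,\alpha(\mathcal{A},\mathcal{B})$, and substituting $\alpha(\mathcal{A},\mathcal{B})\le\alpha_{b,b}(r)$ finishes the proof.

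The delicate point is keeping the constant sharp while passing from indicator pairs to arbitrary bounded $\mathcal{A}$- and $\mathcal{B}$-measurable random variables: the sup-norm factors $C_1$ and $C_2$ must be extracted one at a time, which is exactly what the two successive replacements of $\xi$ by $g$ and, against $g$, of $\eta$ by $h$ accomplish, and the factor $4$ is then forced by the four-term expansion of the covariance of two $\{-1,0,1\}$-valued variables. Everything else — the uniform approximation by simple functions, the bounded-convergence passage to the general case, and the identification $\alpha(\mathcal{A},\mathcal{B})\le\alpha_{b,b}(r)$ from the hypotheses on $U$ and $V$ — is routine, so I do not expect any real obstacle beyond this bookkeeping.
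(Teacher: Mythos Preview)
The paper does not supply its own proof of this proposition: it merely quotes the result from \citet{lgs} and remarks that it is analogous to Theorem~17.2.1 in \citet{ibragimov(1971)}. Your proposal is therefore not competing with any argument in the paper; rather, you have written out a correct self-contained proof of precisely the Ibragimov--Linnik covariance inequality that the paper cites, together with the one-line observation that $\alpha(\mathcal{A},\mathcal{B})\le\alpha_{b,b}(r)$ by the definition of the mixing coefficient. The three-step argument (reduction to simple functions, two successive sign-replacements to pass to $\{-1,0,1\}$-valued $g$ and $h$, and the four-term indicator expansion) is the standard proof and is carried out correctly; in particular your bookkeeping for why exactly the constant $4$ appears is accurate.
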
 }

\subsection{Appendix A: Proof of Theorem \ref{cor:extremogramclt}} \label{appendix A}
Theorem \ref{cor:extremogramclt} is derived from Theorem \ref{cor:vec}. For notation, we suppress the dependence of $m$ on $n$ and write $m$ for $m_n$. Define a vector valued random field by
\begin{center}
 $Y_{ {t}}= X_{D_t},$ where $D_t = t + B_{\gamma} = \{  {s} \in \mathbb{Z}^d: d( {t}, {s}) \leq \gamma \}$. 
\end{center} 
In Theorem \ref{cor:vec}, we will establish a joint central limit theorem for
\begin{equation} \label{def.vec.sum}
  \hat{P}_m(C) = \frac{m_n}{n^d} \sum_{{ {t}} \in \Lambda_n} I_ { \{Y_{ {t}} / a_m \in C \} }  =\frac{m_n}{n^d} \sum_{{ {t}} \in \Lambda_n^p } I_ { \{Y_{ {t}} / a_m \in C \} } + \frac{m_n}{n^d} \sum_{ {t} \in  \Lambda_n \setminus \Lambda_n^p} I_ { \{Y_{ {t}} / a_m \in C \} },
\end{equation}
where $\Lambda_n^p = \{t \in \Lambda_n: d(t, \partial \Lambda_n) \geq p \}$ and $ \partial \cdot$ denotes the boundary.
In fact, showing a CLT for the first term in (\ref{def.vec.sum}) is sufficient as the second term is negligible as $n \rightarrow \infty$.
Recall that 
\begin{center}
$  \displaystyle {p}_m(A) =  m P( X_{\textbf{0}} \in a_m A)$  and $\displaystyle{\tau}_{AB,m}(h) = m P(X_{\textbf{0}} \in a_m A,X_h \in a_m B)$,
\end{center}
where $A$ and $B$ are sets bounded away from the origin.  Write  $ \displaystyle \mu(A) = \lim_{n \rightarrow \infty} {p}_m(A) $,
\begin{eqnarray*}
& & \tau_{AB}(h) =  \lim_{n \rightarrow \infty}{\tau}_{AB,m}(h), \\ 
& &   {  \mu_{A}(D_{\textbf{0}}) = \lim_{x \rightarrow \infty}  P \left( \frac{Y_{t}}{\| Y_t \|} \in A \Big|  \| Y_t \|> x \right ), }  \\
& &    {  \tau_{A \times B} ( D_{\textbf{0}} \times D_l) =  \lim_{x \rightarrow \infty}  P \left( \frac{( Y_{{\textbf{0}}}, Y_l)}{\| \text{vector} \{ Y_{\textbf{0}}, Y_l \} \|} \in A \times B \Big|  \| \text{vector}\{ Y_{\textbf{0}}, Y_l \} \|> x \right ).}
\end{eqnarray*}

\begin{theorem} \label{cor:vec}
Assume the conditions of Theorem \ref{cor:extremogramclt}. Let C be a set bounded away from zero and a continuity set with respect to $\mu$ and $\tau$.  Then
  \begin{eqnarray*}
 S_n = \left( \frac{m_n}{n^d} \right)^{1/2}\sum_{{s} \in \Lambda_n} \left[ I \left( \frac{Y_{s}}{a_m} \in C \right) - P\left( \frac{Y_s}{a_m} \in C \right) \right]  \xrightarrow{d} \YB{N}(0, \sigma_Y^2(C) ) 
\end{eqnarray*} 
where   { $       \sigma_Y^2(C)  =   \mu_{C}(D_{\textbf{0}}) +  \sum_{l \neq \textbf{0} \in \mathbb{Z}^d}   \tau_{C \times C} ( D_{\textbf{0}} \times D_l) . $  }
\end{theorem}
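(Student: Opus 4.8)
The plan is to establish the central limit theorem via Bolthausen's big-block/small-block scheme for mixing random fields, adapted to the triangular-array setting because the summands depend on $n$ through the normalizing level $a_m$. First I would set $Z_{s,n} = I(Y_s/a_m \in C) - P(Y_s/a_m \in C)$ and observe that this is a triangular array of bounded, strictly stationary (in $s$), centered random variables, row-$n$ having variance of order $P(Y_{\textbf 0}/a_m \in C) = O(1/m_n)$. The first routine step is the variance computation: writing $\operatorname{Var}(S_n) = \frac{m_n}{n^d}\sum_{s,t \in \Lambda_n}\operatorname{Cov}(Z_{s,n},Z_{t,n})$, the diagonal term contributes $m_n P(Y_{\textbf 0}/a_m\in C)(1 - P(Y_{\textbf 0}/a_m\in C)) \to \mu_C(D_{\textbf 0})$, and the off-diagonal term, after using stationarity to reduce the double sum to $\sum_{l\neq \textbf 0} (1 - |l_i|/n \text{ factors})\, m_n\operatorname{Cov}(Z_{\textbf 0,n},Z_{l,n})$, converges to $\sum_{l\neq \textbf 0}\tau_{C\times C}(D_{\textbf 0}\times D_l)$; the tail of this sum is controlled by splitting at distance $k$ and $r_n$ exactly as in conditions (\ref{cond.2}) (dependence decays because joint exceedances become rare) and (\ref{cond.1})–(\ref{cond.000}) (mixing controls the far-away covariances, via Proposition~5.1), so $\sigma_Y^2(C)$ is finite and well-defined.

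Next I would partition $\Lambda_n$ into big blocks of side $m_n$ separated by corridors of width $r_n$, so that $S_n = S_n' + S_n''$ where $S_n'$ collects the big-block sums and $S_n''$ the small-block (corridor) remainder. Using the variance estimate above together with $r_n^d/m_n \to 0$, I would show $\operatorname{Var}(S_n'') \to 0$, so $S_n''$ is asymptotically negligible. For $S_n'$, I would enumerate the big blocks and their partial sums $U_{j,n}$; the growth condition $m_n^{2+2d}/n^d \to 0$ guarantees the number of blocks tends to infinity and that each $\operatorname{Var}(U_{j,n})$ is asymptotically a vanishing fraction of $\operatorname{Var}(S_n')$, which is the Lindeberg-type smallness needed. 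I would then replace the $U_{j,n}$ by independent copies $\tilde U_{j,n}$: the characteristic function of $\sum_j U_{j,n}$ differs from the product of individual characteristic functions by an error bounded, via a telescoping/Volkonskii–Rozanov argument using the $\alpha_{c,\cdot}(r_n)$-type coefficient, by a quantity that goes to zero thanks to (\ref{cond.1}) and especially (\ref{cond.3}) (which handles one block against the union of all others, i.e. $\alpha_{c, c n^d}(m_n)$). Finally the independent array satisfies the classical Lindeberg CLT because the summands are uniformly bounded by $O(m_n^{d+1}/n^{d/2})$, again small by $m_n^{2+2d}/n^d \to 0$, giving $S_n' \xrightarrow{d} N(0,\sigma_Y^2(C))$.

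The main obstacle, and the step requiring the most care, is the decorrelation of the big blocks: the blocks are large ($m_n^d$ points) and numerous ($\sim (n/m_n)^d$), so the naive bound accumulating $(n/m_n)^d$ copies of $\alpha_{c,c}(r_n)$ is far too weak; one genuinely needs the form $\alpha_{c, c n^d}(m_n)$ in (\ref{cond.3}) — allowing one block's index set of size $c$ (in the vector-field sense $Y_t = X_{D_t}$, each $Y$-site is $c$ original sites) against the union of all remaining blocks — combined with the block-separation $r_n$ and the rate $m_n/n \to 0$. Keeping track of the bookkeeping that translates the $Y$-field mixing coefficients into the $X$-field coefficients $\alpha_{j,k}(\cdot)$ (each $Y$-block of $K$ sites corresponds to at most $cK$ $X$-sites), and verifying that the multi-index edge-effect corrections in the variance sum are $o(1)$, is where the technical weight lies; everything else is a standard application of the Bolthausen machinery as in \citet{Bolthausen} and \citet{Richard09}.
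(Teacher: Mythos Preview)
Your variance computation is correct and matches the paper's. However, the paper does \emph{not} argue via a Bernstein big-block/small-block partition. It follows Bolthausen (1982) literally, i.e.\ Stein's method: one sets $S_{\alpha,n}=\sum_{d(\alpha,\beta)\le m_n}\sqrt{m_n/n^d}\,\bar I_\beta$, writes $(i\lambda-\bar S_n)e^{i\lambda\bar S_n}=B_1+B_2+B_3$, and shows $E|B_1|^2,\,E|B_2|,\,|EB_3|\to 0$. The term $B_3=-v_n^{-1/2}\sum_\alpha\sqrt{m_n/n^d}\,\bar I_\alpha\,e^{i\lambda(\bar S_n-\bar S_{\alpha,n})}$ pairs a \emph{single} $Y$-site $\alpha$ (that is, $c$ $X$-sites) with the field outside the ball of radius $m_n$, and the covariance inequality gives exactly $n^{d/2}m_n^{1/2}\alpha_{c,cn^d}(m_n)$, which is condition~(\ref{cond.3}). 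The terms $B_1,B_2$ are handled by (\ref{cond.000}) and $m_n^{2+2d}/n^d\to 0$; the sequence $r_n$ enters only in the variance identification, not in the CLT mechanism itself.

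Your blocking route has a real gap at the decorrelation step, and it is not a bookkeeping issue. With big blocks of side $m_n$ separated by corridors of width $r_n$, each block carries $\sim c\,m_n^d$ $X$-sites and adjacent blocks sit at distance $r_n$, so the telescoping/Volkonskii--Rozanov bound produces $(n/m_n)^d$ terms of the type $\alpha_{cm_n^d,\,cn^d}(r_n)$. Condition~(\ref{cond.3}) controls $\alpha_{c,cn^d}(m_n)$ --- first index $c$, separation $m_n$ --- and says nothing about the quantity you need (wrong first index, wrong distance). Your reading of (\ref{cond.3}) as ``one block against the union of all others'' is therefore incorrect: it is ``one $Y$-site against everything beyond radius $m_n$,'' which is precisely the Stein-method requirement for $B_3$. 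Under (\textbf{M1}) as stated, the big-block/small-block argument cannot be closed; you would have to replace (\ref{cond.3}) by a substantially stronger mixing hypothesis, or switch to the Stein decomposition the paper actually uses.
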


\begin{proof}
\YB{We use ideas from \citet{Bolthausen} and \citet{Richard09} to show the CLT for quantity in (\ref{def.vec.sum})
\begin{center}
$\hat{P}_{m}(C) =  {m_{n}} \sum_{ {s} \in \Lambda_n } I _ s/{|\Lambda_n|}$ where $ I_s=  I _ { \left\{ X_{ {s} }/ a_{m} \in C \right\} }$.
\end{center}
The proof for the CLT of  $X_s$ replaced by a vector valued random field $Y_s$ in indicator is analogous.}

Define $H[a, b]  =  \{d(s, t): a \leq d(s,t) \leq b \}$ and {$||l|| = d(\textbf{0},l)$} for convenience. Assume $m_n^{2+2d}=o({n^d})$,  $r_n^{d}=o({m_n}),$ and 
\begin{eqnarray}
&&\label{cond.vec.2} \lim_{k \rightarrow \infty} \displaystyle\limsup_{n \rightarrow \infty} m_{n} \sum_{ l \in \mathbb{Z}^d, ||l|| \in H(k,r_ n]}  P( | X_{ {l}} | > \varepsilon a_{m},  | X_{ {\textbf{0}}} | > \varepsilon a_{m} ) = 0   \quad \text{for}  \quad \forall \epsilon >0, \\
&&\label{cond.vec.1}   \lim_{n \rightarrow \infty} m_{n} \sum_{ l \in \mathbb{Z}^d, || l || \in H(r_n,\infty)}     \alpha_{1,1}( ||l|| ) = 0, \\
&&\label{cond.vec.5}  { \sum_{l \in  \mathbb{Z}^d}   \alpha_{j_1,j_2}( ||l|| ) < \infty \quad \text{for} \quad 2 \leq  j_1+ j_2 \leq 4,}  \\
&&\label{cond.vec.4}   \lim_{n \rightarrow \infty} n^{d/2} m_n^{1/2} \alpha_{1, n^d}(m_n) = 0, 
\end{eqnarray}
which are univariate case analog of conditions (\ref{cond.2}) - (\ref{cond.3}). 

By the same arguments in \citet{Richard09},
\begin{eqnarray}
&&\label{rst.laattice.1}   E\hat{P}_{m}(C) \rightarrow \mu (C)  \\
&&\label{rst.laattice.2}  \text{var} \left(\hat{P}_{m}(C)\right) \sim \displaystyle\frac{m_{n}}{n^d} \left[ \mu (C) +   { \sum_{ l \neq \textbf{0} \in \mathbb{Z}^d } }  \tau_{CC} (l)  \right] = \frac{m_{n}}{n^d} \;  \sigma_X^2(C),
\end{eqnarray} 
 {where (\ref{rst.laattice.1}) is implied by the regularly varying assumption. To see (\ref{rst.laattice.2}), observe that 
\begin{eqnarray} \label{proof.cov.cal}
 \frac{n^d}{m_{n}}  \text{var} \left(\hat{P}_{m}(C)\right) = \frac{m_{n}}{n^d} \sum_{ s \in \Lambda_n} \text{var} (I_s)  + \frac{m_{n}}{n^d} \sum_{ s,t \in \Lambda_n, s \neq t } \text{cov} (I_s, I_t) = A_1 + A_2.
\end{eqnarray} 
By the regularly varying assumption, $A_1 =  p_m(C) + (p_m(C))^2/m_n \rightarrow \mu(C).$ Turning to $A_2 $, for $k \geq 1$ fixed, 
\begin{eqnarray*}
A_2 & \sim &  \frac{m_{n}}{n^d} \sum_{ l = (l_1,...l_d) \neq \textbf{0}, ||l|| \leq  \max \Lambda_n } \Pi_{i=1}^d (n- |l_i|) \text{cov}(I_{\textbf{0}}, I_l)    \\
	& = & \frac{m_{n}}{n^d} \left[ \sum_{ l \in  \mathbb{Z}^d,  ||l|| \in H(0,k]} \cdot +  \sum_{ l \in  \mathbb{Z}^d,  ||l||  \in H(k, r_n]} \cdot  + \sum_{  l \in  \mathbb{Z}^d, ||l||  \in H(r_n, \max \Lambda_n]} \cdot  \right] = A_{21} + A_{22} + A_{23}
\end{eqnarray*} 
where $\max \Lambda_n = \{ \max(d(s,t)): s,t \in \Lambda_n \}$ and $ \Pi_{i=1}^d (n- |l_i|)$ counts a number of cubes with lag $l$ in $\Lambda_n$. 

From the regularly varying assumption, $  \displaystyle \lim_{k \rightarrow \infty} \limsup_{n \rightarrow \infty} A_{21}  =   \sum_{ l \neq {\textbf{0}} \in \mathbb{Z}^d} \tau_{CC}(l)$ since
\begin{gather*}
 \limsup_{n \rightarrow \infty} A_{21} = \sum_{  l \in \mathbb{Z}^d, ||l||   \in H(0,k]} \limsup_{n \rightarrow \infty}   \left( \tau_{CC,m}(C) - p_m(C) \frac{p_m(C)}{m_n} \right) =     \sum_{ l \in \mathbb{Z}^d, ||l||   \in H(0,k]} \tau_{CC}(l).
\end{gather*}
Thus, it is sufficient to show 
\begin{gather*}
 \lim_{k \rightarrow \infty} \limsup_{n \rightarrow \infty} (|A_{22}|+ |A_{23} |) =0
\end{gather*}
to achieve (\ref{rst.laattice.2}).  Recall that $C$ is bounded away from the origin. Notice that  
\begin{eqnarray*}
   A_{22}  & \leq &   const \;    m_{n} \sum_{  l \in \mathbb{Z}^d, ||l||  \in H(k,r_ n] } \left[ P( | X_{l} | > \varepsilon a_{m},  | X_{ \textbf{0}} | > \varepsilon a_{m} )  + \left(\frac{p_m(C)}{m_n}\right)^2 \right],   \\
  A_{23} & \leq &  const \;  m_{n} \sum_{   l \in \mathbb{Z}^d, || l ||  \in H(r_n,\infty)}   \alpha_{1,1}(||l||),
\end{eqnarray*}
so (\ref{rst.laattice.2}) holds assuming (\ref{cond.vec.2}), (\ref{cond.vec.1}) and $r_n^d/m_n \rightarrow 0.$}

Now, we prove 
\begin{gather}
\label{rst.laattice.3}   {  \sqrt{ \frac{n^d}{m_n}} (   \hat{P}_{m}(C) -  p_m(C) )  	= \sqrt{\frac{m_n}{n^d}} \sum_{ {s} \in \Lambda_n}  \bar{I}_s   \xrightarrow{d} N(0, \sigma_X^2(C) ) }
\end{gather} 
 where $  \bar{I}_t = I \left( \frac{X_{t}}{a_m} \in C \right) - P \left( \frac{X}{a_m} \in C \right)$. First, infer from (\ref{proof.cov.cal}) that

\begin{gather}
\label{inequality.proof}  {\frac{m_n}{n^d}} \sum_{s, t \in \Lambda_n}   | \text{cov}(\bar{I}_s, \bar{I}_t)|  < \infty.
\end{gather} 

As the next step, define 
\begin{gather*}
 S_{\alpha,n} =  \sum_{\beta \in \Lambda_n, d(\alpha,\beta) \leq m_n} \sqrt{\frac{m_n}{n^d}}  \bar{I}_{\beta}, \;  v_n =  \sum_{\alpha \in \Lambda_n }  E(  \sqrt{\frac{m_n}{n^d}}  \bar{I}_{\alpha} S_{\alpha,n} ), \; \bar{S}_n = v_n^{-1/2} S_n, \;   \text{and} \;  \bar{S}_{\alpha,n} = v_n^{-1/2} S_{\alpha,n}.
\end{gather*}
From the definition, $ \displaystyle  v_n  \sim {\text{var}( S_{n})} \rightarrow  \sigma^2(C)$.

Now, use Stein's lemma to show (\ref{rst.laattice.3}) as in \citet{Bolthausen} by checking $\lim_{n \rightarrow \infty} E((i \lambda -  \bar{S}_n)e^{i \lambda  \bar{S}_n}) = 0$ for all $ \lambda \in R$.  Write 
\begin{eqnarray*}
(i \lambda -  \bar{S}_n) e^{i \lambda  \bar{S}_n}  & = & i \lambda e^{i \lambda  \bar{S}_n} (1 - v_n^{-1} \sum_{\alpha \in \Lambda_n}  \sqrt{\frac{m_n}{n^d}}  \bar{I}_{\alpha} {S}_{\alpha,n})   - v_n^{-1/2} e^{i \lambda  \bar{S}_n} \sum_{\alpha \in \Lambda_n}   \sqrt{\frac{m_n}{n^d}}  \bar{I}_{\alpha} [1 -e^{  - i \lambda  \bar{S}_{\alpha,n} } - i \lambda  \bar{S}_{\alpha,n} ] \\
&& \qquad \qquad \qquad   \qquad \qquad \qquad  -v_n^{-1/2}   \sum_{\alpha \in \Lambda_n}     \sqrt{\frac{m_n}{n^d}}   \bar{I}_{\alpha}  e^{  - i \lambda  ( \bar{S}_{n}  - \bar{S}_{\alpha,n} )}  \\
& = & B_1 + B_2 + B_3.
\end{eqnarray*}
{We will show $  E|B_1|^2  \rightarrow 0 $. From Proposition \ref{covar.upper}, when $d(\alpha, \alpha') \geq 3 m_n$,
\begin{eqnarray*}
|\text{cov}(\bar{I}_{\alpha}\bar{I}_{\beta},\bar{I}_{\alpha'}\bar{I}_{\beta'})|
 & \leq &   4 \; \alpha_{2,2}(d(\alpha, \alpha') - 2 m_n). 
\end{eqnarray*}
When $ d(\alpha, \alpha') < 3 m_n,$ let $j = \min\{ d(\alpha, \alpha') , d(\alpha, \beta'),d(\beta, \alpha'),d(\beta, \beta')  \}$. Then
\begin{eqnarray*}
|\text{cov}(\bar{I}_{\alpha}\bar{I}_{\beta},\bar{I}_{\alpha'}\bar{I}_{\beta'})|  & \leq & 4 \; \alpha_{p,q}(j)  
\end{eqnarray*}
for $2 \leq p + q \leq 4.$ Given  $ m_n^{2+2d} = o(n^d)$, we have $E|B_1|^2 \rightarrow 0$ since
\begin{eqnarray} \label{inequality.b1}
&& E|B_1|^2 \nonumber \\
 & = &   \lambda^2 v_n^{-2} \sum_{\alpha,\alpha', \beta,\beta',d(\alpha,\beta) \leq m_n,d(\alpha',\beta') \leq m_n} \frac{m_n^2}{n^{2d}} \; \text{cov}(\bar{I}_{\alpha}\bar{I}_{\beta},\bar{I}_{\alpha'}\bar{I}_{\beta'})  \nonumber \\
 & \leq &      \frac{\lambda^2}{ v_n^{2}}  \frac{m_n^2}{n^{2d}} \; \left[  \sum_{ \alpha \in \Lambda_n}  \sum_{ \alpha' \in \Lambda_n \cap   \{ d(\alpha, \alpha') > 3 m_n \} } \sum_{ \beta, \beta' }    \left| \text{cov}(\bar{I}_{\alpha}\bar{I}_{\beta},\bar{I}_{\alpha'}\bar{I}_{\beta'}) \right|     +   \sum_{ \alpha \in \Lambda_n}  \sum_{ \alpha' \in \Lambda_n \cap   \{ d(\alpha, \alpha') \leq 3 m_n \} } \sum_{ \beta, \beta' }      \left| \text{cov}(\bar{I}_{\alpha}\bar{I}_{\beta},\bar{I}_{\alpha'}\bar{I}_{\beta'}) \right|  \right]   \nonumber \\
  &  \leq &   \frac{\lambda^2}{ v_n^{2}}   \frac{m_n^2}{n^{2d}}    \; 4 \; \left[  \sum_{ \alpha \in \Lambda_n}  \sum_{ \alpha' \in \Lambda_n \cap   \{ d(\alpha, \alpha') > 3 m_n \} } \sum_{ \beta, \beta' }    \alpha_{2,2}(d(\alpha, \alpha')  - 2 m_n)    +   \sum_{ \alpha \in \Lambda_n}  \sum_{ \alpha' \in \Lambda_n \cap   \{ d(\alpha, \alpha') \leq 3 m_n \} } \sum_{ \beta, \beta' }     \alpha_{p,q}( j  )   \right] \;   \nonumber \\
  &  \leq &    \frac{ const \lambda^2 m_n^2}{ v_n^{2} n^{2d}}  n^d m_n^{2d} \; \left[ \sum_ { l \in \mathbb{Z}^d,||l|| \in H(3m_n, \infty)}  \alpha_{2,2}(||l||- 2 m_n)    +  \; \sum_{l \in \mathbb{Z}^d, ||l|| \in H[0, 3m_n]}   \alpha_{p,q}(||l||)   \right] \;  \\
& = &  O(m_n^{2 + 2d}/ n^d ). \nonumber
\end{eqnarray}
Notice that in (\ref{inequality.b1}), $ n^d m_n^{2d}$ is from summing over $\alpha$ (giving $n^d$), $\beta$ (giving $ O( m_n^d)$), and $ \beta'$ (giving $O( m_n^d)$) for the first summation. Similarly, for the second summation, $ n^d m_n^{2d}$ is from summing over $ \alpha,\beta$ and  $ \alpha' \; \text{or}\; \beta'$ depending on the location of points. The last equation is from  (\ref{cond.vec.5}). }

Now we show $E|B_2| \rightarrow 0$ provided $ m_n^{2+2d} = o(n^d)$. Recall that $ | e^{ix} -1 - ix | \leq \frac{1}{2} x^2 $. Then
\begin{eqnarray*}
E|B_2|  & \leq &  c v_n^{-1/2} n^d  \sqrt{\frac{m_n}{n^d}}    E \bar{S}_{\alpha,n}^2\\
& =  & c  v_n^{-1/2}   \sqrt{\frac{m_n}{n^d}}  m_n   \sum_{\beta,\beta',d(\textbf{0},\beta) \leq m_n,d( \textbf{0},\beta') \leq m_n}     { E( \bar{I}_{\beta} \bar{I}_{\beta'} )}  \\
& \leq & c    \sqrt{\frac{m_n}{n^d}} m_n^{d+1}   \sum_{ l \in \Lambda_n}    {    E( \bar{I}_{{\textbf{0}}} \bar{I}_{l} )}   \\
& = & O \left( \sqrt{ \frac{m_n^{1 + 2d}}{n^d}} \right)
\end{eqnarray*}
where $ m_n    \sum_{ l \in \Lambda_n}    {    E( \bar{I}_{{\textbf{0}}} \bar{I}_{l} )}   < \infty$ is inferred from (\ref{inequality.proof}).  

Lastly, the condition (\ref{cond.vec.4}) implies $ |E B_3|   \rightarrow 0 $ since
\begin{eqnarray*}
 |E B_3| & \leq & c v_n^{-1/2} n^d\sqrt{\frac{m_n}{n^d}} \alpha_{1, n^d} (m_n) = c n^{d/2} m_n^{1/2}\alpha_{1, n^d} (m_n).
\end{eqnarray*}
Thus, Stein's lemma is satisfied, which completes the proof.
\end{proof} 

\begin{remark}
$\hat{P}_m(C)$ is a consistent estimator of $\mu(C).$ If $\mu(C) = 0,$ \text{var} $(\hat{P}_m(C)) = o(m_n/n^d)$. 
\end{remark}

\begin{remark} \label{Appendix.A.remark}
The conditions (\ref{cond.2}) - (\ref{cond.3}) are derived from (\ref{cond.vec.2}) - (\ref{cond.vec.4}) by replacing univariate process ($X_{ {t}}$)   by vectorized process ($Y_{ {t}}$). In order to see (\ref{cond.2}) is derived from (\ref{cond.vec.2}), for example, consider Euclidean norm for ($Y_{ {t}}$) process. Then, the vectorized analog of (\ref{cond.vec.2}) is
\begin{center}
$ \displaystyle    \lim_{k \rightarrow \infty} \limsup_{n \rightarrow \infty} m_n \sum_{ l \in \mathbb{Z}^d,  || l || \in H(k,r_n]}    P( ||Y_{ {\textbf{0}}}|| > \epsilon a_m,  ||Y_{ {l}}|| > \epsilon a_m ) = 0$,
\end{center}
which holds under (\ref{cond.2}) by triangular inequality, i.e.,
\begin{eqnarray*}
\displaystyle P( ||Y_{\textbf{0}}|| > \epsilon a_m,  ||Y_{ {l}}|| > \epsilon a_m ) & \leq &   P(  \sum_{ {s} \in D_{\textbf{0}}}|X_{ {s}}| > \epsilon a_m,  \sum_{ {s}' \in D_l} |X_ {s'}| > \epsilon a_m )   \leq  P \left( \max_{ {s} \in D_{\textbf{0}}}|X_{ {s}}| >  \frac{\epsilon a_m}{|D_{\textbf{0}}|} , \max_{ {s}' \in D_l}|X_{ {s}'}|  >  \frac{\epsilon a_m}{|D_l|} \right).
\end{eqnarray*}
The rest of the derivations are straightforward.
\end{remark}

\begin{proof} [Proof of Theorem \ref{cor:extremogramclt}]
Apply the Cram\'{e}r-Wold device to Theorem \ref{cor:vec} to achieve the multivariate central limit theorem, then use $\delta$-method to obtain the central limit theorem for the ESE. To specify the limiting variance $\Sigma$, redefine
\begin{eqnarray*}
& &  \mu(A) = \lim_{x \rightarrow \infty}  P \left( \frac{X_{t}}{\| Y_t \|} \in A \Big|  \| Y_t \|> x \right ).
\end{eqnarray*}
Then, $\Sigma = \mu(A)^{-4}  {F} \Pi  {F}^t $ where
\begin{eqnarray*}
\Pi_{i,i}  & = & \mu_{S_i}(D_{\textbf{0}}) + \sum_{ l \neq {\textbf{0}} \in \mathbb{Z}^d}  \tau_{S_i \times S_i} ( D_{\textbf{0}} \times D_l ) \\
\Pi_{i,j}  & = & \mu_{S_i \cap S_j}( D_{\textbf{0}} ) + \sum_{ l \neq {\textbf{0}} \in \mathbb{Z}^d} \tau_{S_i \times S_j} ( D_{\textbf{0}} \times  D_l ) \\
F & = & \begin{pmatrix}
  \mu(S_{ (\# H)+1}) & 0  & 0 & ... & 0 &  -\mu_{S_1}(  D_{\textbf{0}} ) \\
  0 & \mu(S_{(\# H)+1})  & 0 & ... & 0 & -\mu_{S_2}(  D_{\textbf{0}}  )  \\
. & . & . & ... & . &  .\\
. & . & . & ... & . &  .\\
0 & 0  & 0 & ... & \mu(S_{(\# H)+1}) &  -\mu_{S_{(\# H)}}(  D_{\textbf{0}} ) 
 \end{pmatrix}
\end{eqnarray*}
where the sets $S_i$ are chosen such that $ \{ Y_t \in S_i \} = \{  X_t \in A,  X_s \in B: d(t,s) = h_ i \}$ for $h_i \in H$ and $i = 1,..., (\# H)$ and $\{ Y_t \in S_{(\# H)+1} \} = \{ X_t \in A\}$. For more details, see \citet{Richard09}.  \end{proof}


\subsection{Appendix B: Proof of Theorem \ref{theorem:clt:2}} \label{appendix B}

Theorem \ref{theorem:clt:2} is derived from Proposition \ref{irregular.denominator} - \ref{irregular.numerator.2}. Before proceeding to Proposition \ref{irregular.denominator}, we present the following result regarding LUNC. 
\begin{Proposition}
\label{local.uniform.rv} Consider a strictly stationary regularly varying random field $\{X_s, s \in \mathbb{R}^d \}$ with index $\alpha > 0$ satisfying LUNC. For a positive integer $k$ and $\lambda_n \rightarrow 0$,
\begin{eqnarray*}
	n P\left( \frac{X_{\textbf{0}}}{a_n} \in A_0, \frac{X_{s_1+\lambda_n}}{a_n}  \in A_1, \cdots, \frac{X_{s_k+\lambda_n}}{a_n}  \in A_k  \right)  \rightarrow \tau_{A_0,A_1,\cdots,A_k}(s_1,\cdots,s_k)
\end{eqnarray*}
provided $A_0 \times A_1 \times \cdots \times A_k$ is a continuity set of the limit measure 
\begin{eqnarray*}
\tau_{A_0,A_1,\cdots,A_k}(s_1,\cdots,s_k) = \lim_{n \rightarrow \infty} n P\left(  {X_{\textbf{0}}}/{a_n} \in A_0,  {X_{s_1}}/{a_n}  \in A_1, \cdots,  {X_{s_k}}/{a_n}  \in A_k  \right).
\end{eqnarray*}
\end{Proposition}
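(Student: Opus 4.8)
The plan is to show that shifting the $k$ locations by the vanishing amount $\lambda_n$ does not disturb the regularly varying limit, by sandwiching the perturbed probability between its values on eroded and dilated copies of the sets $A_i$. The one probabilistic ingredient I would extract first is a uniform-smallness statement for the increments: by stationarity $n\,P(|X_{s_i+\lambda_n}-X_{s_i}|/a_n>\delta)=n\,P(|X_{\lambda_n}-X_{\mathbf 0}|/a_n>\delta)$, and once $\|\lambda_n\|$ is below the $\delta'$ supplied by LUNC this is at most $n\,P(\sup_{\|s\|<\delta'}|X_s-X_{\mathbf 0}|/a_n>\delta)$, whose $\limsup$ is below any prescribed $\epsilon$; hence $n\,P(|X_{s_i+\lambda_n}-X_{s_i}|/a_n>\delta)\to 0$ for every $\delta>0$ and every $i$.

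Next I would introduce, for small $\delta>0$, the $\delta$-erosion $A_i^{-\delta}=\{x:B(x,\delta)\subseteq A_i\}$ and the $\delta$-dilation $A_i^{+\delta}=\{x:d(x,A_i)<\delta\}$. On the event $\bigcap_{i=1}^k\{|X_{s_i+\lambda_n}-X_{s_i}|/a_n\le\delta\}$ one has the inclusions $\{X_{s_i}/a_n\in A_i^{-\delta}\}\subseteq\{X_{s_i+\lambda_n}/a_n\in A_i\}\subseteq\{X_{s_i}/a_n\in A_i^{+\delta}\}$, so a union bound over the $k$ increment events sandwiches the probability $P_n=P(X_{\mathbf 0}/a_n\in A_0,\ X_{s_1+\lambda_n}/a_n\in A_1,\ldots,X_{s_k+\lambda_n}/a_n\in A_k)$ from the statement:
\[
n\,P\!\Big(\tfrac{X_{\mathbf 0}}{a_n}\in A_0,\ \bigcap_{i=1}^k\tfrac{X_{s_i}}{a_n}\in A_i^{-\delta}\Big)-R_n\ \le\ nP_n\ \le\ n\,P\!\Big(\tfrac{X_{\mathbf 0}}{a_n}\in A_0,\ \bigcap_{i=1}^k\tfrac{X_{s_i}}{a_n}\in A_i^{+\delta}\Big)+R_n,
\]
where $R_n=\sum_{i=1}^k n\,P(|X_{s_i+\lambda_n}-X_{s_i}|/a_n>\delta)\to0$ by the previous step.

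Then I would let $n\to\infty$. Since $A_0\times\cdots\times A_k$ is (as always) bounded away from $\mathbf 0$, some factor is bounded away from $0$, and erosion keeps it so; thus for all small $\delta$ the products $A_0\times A_1^{\pm\delta}\times\cdots\times A_k^{\pm\delta}$ remain bounded away from $\mathbf 0$, and for all but countably many $\delta$ they are continuity sets of the limit measure (their boundaries lie on the level sets $\{d(\cdot,A_i)=\delta\}$, which are pairwise disjoint, so only countably many can carry positive mass). For such $\delta$ the regular variation hypothesis turns the two outer terms into $\tau_{A_0,A_1^{\mp\delta},\ldots,A_k^{\mp\delta}}(s_1,\ldots,s_k)$, giving $\tau_{A_0,A_1^{-\delta},\ldots,A_k^{-\delta}}\le\liminf_n nP_n\le\limsup_n nP_n\le\tau_{A_0,A_1^{+\delta},\ldots,A_k^{+\delta}}$. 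Finally, letting $\delta\downarrow0$ through good values, $A_i^{-\delta}\uparrow\operatorname{int}A_i$ and $A_i^{+\delta}\downarrow\overline{A_i}$, and monotone convergence of the limit measure together with the hypothesis that $A_0\times\cdots\times A_k$ is a continuity set (so its boundary is $\mu$-null) forces both outer bounds down to $\tau_{A_0,A_1,\ldots,A_k}(s_1,\ldots,s_k)$, which is the claim.

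The main obstacle I expect is the bookkeeping in the last step: verifying that the eroded and dilated product sets are continuity sets and that their limit-measure masses genuinely squeeze to $\mu(A_0\times\cdots\times A_k)$. This is where the continuity-set hypothesis is used in an essential way, via the standard facts that a locally finite measure charges only countably many members of a disjointly indexed family of level sets and that the boundary of a finite product decomposes over the factors. By contrast, the probabilistic heart — that LUNC absorbs the location perturbation — is routine once stationarity reduces it to the single increment $X_{\lambda_n}-X_{\mathbf 0}$.
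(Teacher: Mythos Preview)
Your argument is correct and complete in its essentials; the erosion/dilation sandwich together with LUNC and the countable-avoidance of bad $\delta$'s does exactly what is needed. It is, however, a genuinely different route from the paper's. The paper does not work set-by-set: instead it fixes an arbitrary continuous $f$ with compact support on $\bar{\mathbb{R}}^{k+1}\setminus\{\mathbf 0\}$, uses uniform continuity of $f$ to split $nE\big|f(\tilde X_n/a_n)-f(\tilde X/a_n)\big|$ into a large-increment piece (killed by LUNC) and a small-increment piece (bounded by $\epsilon$ times a regularly-varying tail probability), and thereby establishes the full vague convergence $nP(a_n^{-1}\tilde X_n\in\cdot)\xrightarrow{v}\mu$. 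The statement for continuity sets then drops out in one stroke from the Portmanteau theorem. Your approach trades that single appeal to Portmanteau for the explicit $\delta$-layer bookkeeping you flag as the main obstacle; the paper's approach hides that bookkeeping inside the standard vague-convergence machinery, at the cost of the slightly less elementary detour through test functions. Either way works; the paper's route is a bit cleaner precisely because it sidesteps the continuity-set accounting you correctly identify as the most delicate part of your argument.
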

\begin{proof}
 Let $f$ be a continuous function with compact support on $\bar{\mathbb{R}}^{k+1}  \setminus \{ {\textbf{0}} \}.$ Since $f$ has compact support, it is uniformly continuous and hence for every $ \epsilon >0$ there exists $\delta$ such that 
$|f(x_1,x_2, \cdots ,x_{k+1}) - f(y_1,y_2, \cdots, y_{k+1})| < \epsilon   $ whenever $ | (x_1,x_2,\cdots ,x_{k+1})  - (y_1,y_2,\cdots, y_{k+1})| < \delta.$

Let $\tilde{X}_n = (X_{\textbf{0}}, X_{s_1+\lambda_n}, \cdots, X_{s_k+\lambda_n})$ and $\tilde{X} = (X_{\textbf{0}}, X_{s_1}, \cdots, X_{s_k})$. Notice that 
\begin{eqnarray*}
	n E  \left| f\left( \frac{\tilde{X}_n }{a_n}  \right) - f\left( \frac{\tilde{X} }{a_n}  \right) \right| & = & n E| \cdot| I_{\{ \frac{|\tilde{X}_n  - \tilde{X}|}{a_n} > \delta \}} +n E| \cdot| I_{\{ \frac{|\tilde{X}_n  - \tilde{X}|}{a_n} \leq \delta \}} = A_1 + A_2.
\end{eqnarray*}
Let $M = \max  f\left( \frac{\textbf{X}}{a_n}  \right).$ By (\ref{note.2}), there exists $\epsilon  > 0$ such that
\begin{eqnarray*}
	\limsup_n A_1 & \leq & \limsup_n 2  M n \left[ P\left( |X_{s_1+\lambda_n} - X_{s_1} | > \frac{\delta a_n}{k} \right)   + \cdots + P\left(|X_{s_k+\lambda_n}- X_{s_k} | > \frac{\delta a_n}{k} \right) \right]  <  2 M \epsilon
\end{eqnarray*}
since $ |X_{\lambda_n}- X_{\textbf{0}}|    \leq	\sup_{|s| < \delta'} |X_s - X_{\textbf{0}}|$ as $n \rightarrow \infty$ for $|\lambda_n| < \delta'$.
For $A_2$, since the support of $f \in \{ |\tilde{X}| > C \} \subset \{  | X_{\textbf{0}}|  > \frac{C}{k+1} \} \cup   \cdots \cup \{  | X_{s_k} |  > \frac{C}{k+1} \} $ 
\begin{eqnarray*}
	\limsup_n A_2 & \leq & \limsup_n  \epsilon n \left[ P\left( \frac{|\tilde{X}_n|}{a_n} > C \right) + P\left( \frac{|\tilde{X}|}{a_n} > C \right)  \right] \\
		& = & \limsup_n  \epsilon \; n \;2 (k+1) \; P\left( {|X_{\textbf{0}}|}> a_n C/(k+1) \right)  \\
		& = &  \epsilon \; 2 ({k+1}) \tau_{BB}(\textbf{0}), \qquad \text{where} \; B = \{x: x > C/(k+1) \}.
\end{eqnarray*}
Take $\epsilon$ small by choosing appropriate $ \delta$ and $ \delta'$, then for a positive integer $k$ and $\lambda_n \rightarrow 0$,
\begin{gather*}
 n E f\left( \frac{X_{\textbf{0}}, X_{s_1+\lambda_n}, \cdots ,X_{s_k+\lambda_n}}{a_n}  \right) \rightarrow \int f(u_1,u_2,\cdots,u_k) \mu(du_1,du_2,\cdots,du_k)
\end{gather*}
for any continuous function with compact support $f$. Using Portmanteau theorem for vague convergence, we complete the proof. See Theorem 3.2 in \citet{Resnick(HT)}. \end{proof}

We discuss asymptotics of the denominator and the numerator of the ESE in turn. 
\begin{Proposition} \label{irregular.denominator}
Under the setting of Theorem \ref{theorem:clt:2} and  condition (\textbf{M2}),
\begin{center}
	$ \displaystyle E ( \hat{p}_m(A) ) = p_m(A) \rightarrow \mu(A) \quad $  and  $ \quad \displaystyle \frac{|S_n|}{m_n}  {\text{var}} ( \hat{p}_m(A) )  \rightarrow \frac{\mu(A)}{\nu} + \int_{\mathbb{R}^2} \tau_{AA}(y) dy$.
\end{center}
Hence, $ \hat{p}_m(A) \xrightarrow{p} \mu(A)$.
\end{Proposition}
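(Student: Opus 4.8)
\emph{Approach.} The plan is to compute $E[\hat p_m(A)]$ and $\text{var}(\hat p_m(A))$ exactly, using that $N$ is a homogeneous Poisson process independent of $X$, and then deduce consistency from Chebyshev's inequality. For the mean, condition on $X$ and apply Campbell's theorem: the intensity measure of $N$ on $S_n$ is $\nu\,ds$, so
\[
E\!\left[\int_{S_n} I\!\left(\tfrac{X_{s_1}}{a_m}\in A\right) N(ds_1)\,\Big|\,X\right] = \nu\int_{S_n} I\!\left(\tfrac{X_{s_1}}{a_m}\in A\right) ds_1 .
\]
Taking expectations and using strict stationarity, $E[\hat p_m(A)] = \frac{m_n}{\nu|S_n|}\,\nu|S_n|\,P(X_{\textbf 0}/a_m\in A) = m_n P(X_{\textbf 0}/a_m\in A) = p_m(A)$, and $p_m(A)\to\mu(A)$ is exactly the defining limit of the regularly varying field.

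\emph{Variance.} For the second moment I would use that, conditionally on $X$, the product $N(ds_1)N(ds_2)$ has expectation $\nu^2\,ds_1\,ds_2$ off the diagonal and contributes an extra $\nu\,ds_1$ on the diagonal; since the integrand is an indicator this gives
\[
E\!\left[\Big(\int_{S_n} I\!\left(\tfrac{X_{s_1}}{a_m}\in A\right) N(ds_1)\Big)^{\!2}\,\Big|\,X\right] = \nu^2\!\!\int_{S_n}\!\!\int_{S_n} I\!\left(\tfrac{X_{s_1}}{a_m}\in A\right)I\!\left(\tfrac{X_{s_2}}{a_m}\in A\right) ds_1\,ds_2 + \nu\!\int_{S_n} I\!\left(\tfrac{X_{s_1}}{a_m}\in A\right) ds_1 .
\]
Taking expectations, subtracting $p_m(A)^2$, multiplying by $|S_n|/m_n$, and using the change of variables $y=s_2-s_1$ together with $\int_{S_n}\int_{S_n} g(s_2-s_1)\,ds_1\,ds_2 = \int_{\mathbb{R}^2} g(y)\,|S_n\cap(S_n-y)|\,dy$, one obtains
\[
\frac{|S_n|}{m_n}\,\text{var}(\hat p_m(A)) = \frac{p_m(A)}{\nu} + \int_{\mathbb{R}^2}\tau_{AA,m}(y)\,w_n(y)\,dy - \frac{p_m(A)^2}{m_n}\,|S_n|,\qquad w_n(y):=\frac{|S_n\cap(S_n-y)|}{|S_n|}.
\]
The diagonal term has produced the $p_m(A)/\nu\to\mu(A)/\nu$ piece. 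The last term diverges, but by $\int_{\mathbb{R}^2}|S_n\cap(S_n-y)|\,dy=|S_n|^2$ it equals $\frac{p_m(A)^2}{m_n}\int_{\mathbb{R}^2}w_n(y)\,dy$, so it merges with the integral to give $\int_{\mathbb{R}^2}\tilde\tau_{AA,m}(y)\,w_n(y)\,dy$, where $\tilde\tau_{AA,m}(y):=\tau_{AA,m}(y)-p_m(A)^2/m_n = m_n\,\text{cov}\big(I(X_{\textbf 0}/a_m\in A),\,I(X_y/a_m\in A)\big)\to\tau_{AA}(y)$ pointwise. This recentering is the crucial algebraic manoeuvre.

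\emph{Passing to the limit (the main obstacle).} It remains to show $\int_{\mathbb{R}^2}\tilde\tau_{AA,m}(y)\,w_n(y)\,dy\to\int_{\mathbb{R}^2}\tau_{AA}(y)\,dy$, and this is where (\textbf{M2}) enters. I would fix $k$ and split over $B[0,k]$, $B[k,r_n]$ and $\mathbb{R}^2\setminus B[0,r_n)$. On $B[0,k]$ the integrand is bounded (by $\sup_n p_m(A)<\infty$, since $p_m(A)$ converges) and converges pointwise (using (\ref{Karr.lemma}) for $w_n$ and regular variation for $\tilde\tau_{AA,m}$ at $\mu$-continuity sets), so dominated convergence on this finite-measure set yields $\int_{B[0,k]}\tau_{AA}(y)\,dy\uparrow\int_{\mathbb{R}^2}\tau_{AA}(y)\,dy$ by (\ref{M4}). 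On $B[k,r_n]$, since $A$ is bounded away from $\textbf 0$ there is $\epsilon>0$ with $\tau_{AA,m}(y)\le m_n P(|X_y|>\epsilon a_m,|X_{\textbf 0}|>\epsilon a_m)$, whose integral over $B[k,r_n]$ is annihilated by $\lim_k\limsup_n$ via (\ref{M2}), while the leftover $\frac{p_m(A)^2}{m_n}|B[k,r_n]|=O(r_n^2/m_n)\to 0$ because $r_n^2=o(m_n)$. On $\mathbb{R}^2\setminus B[0,r_n)$ the mixing bound gives $|\tilde\tau_{AA,m}(y)|\le m_n\alpha_{1,1}(d(\textbf 0,y))$, so this piece is at most $\int_{\mathbb{R}^2\setminus B[0,r_n)} m_n\alpha_{1,1}(d(\textbf 0,y))\,dy\to 0$ by (\ref{M1}). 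Assembling the three pieces with the standard ``$\limsup_n$ then $k\to\infty$'' argument gives the stated variance limit; the delicate points are the bookkeeping of the divergent $\frac{p_m(A)^2}{m_n}|S_n|$ term and the order of the iterated limits, the rest being routine.

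\emph{Consistency.} Finally $E[\hat p_m(A)]\to\mu(A)$ and $\text{var}(\hat p_m(A))=\frac{m_n}{|S_n|}\big(\frac{\mu(A)}{\nu}+\int_{\mathbb{R}^2}\tau_{AA}(y)\,dy+o(1)\big)\to 0$, since $\frac{m_n}{|S_n|}=\frac{\lambda_n^2 m_n}{\lambda_n^2|S_n|}\to 0$ by $\lambda_n^2 m_n\to 0$ and $\lambda_n^2|S_n|\to\infty$; Chebyshev's inequality then gives $\hat p_m(A)\xrightarrow{p}\mu(A)$.
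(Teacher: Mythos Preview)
Your proof is correct and follows essentially the same approach as the paper: compute the second moment via the diagonal/off-diagonal decomposition of $N(ds_1)N(ds_2)$, pass to the covariance form $\tilde\tau_{AA,m}(y)=m_n\,\mathrm{cov}(I_{\textbf 0},I_y)$, and split the integral over $B[0,k)$, $B[k,r_n]$, and the complement, handling the pieces via dominated convergence, (\ref{M2}) with $r_n^2=o(m_n)$, and (\ref{M1}) respectively. The only cosmetic difference is that the paper writes the covariance form directly, whereas you first isolate the term $\frac{p_m(A)^2}{m_n}|S_n|$ and then reabsorb it using $\int w_n(y)\,dy=|S_n|$; this extra step is valid but unnecessary.
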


\begin{proof}
By the regularly varying property, $E ( \hat{p}_m(A) ) =  p_m(A) \rightarrow \mu(A)$. 

For $ \text{var} (\hat{p}_m(A))$, recall that $N^{(2)}(ds_1, ds_2) = N(ds_1) N(ds_2) I(s_1 \neq s_2)$ and observe that
\begin{eqnarray*}
& &  E( \hat{p}_m(A)^2) \\
 &  & \quad =  \left(\frac{m_n}{\nu |S_n|} \right)^2 E \left[ \int_{S_n}  I\left(\frac{X_{s_1}}{a_m} \in A \right) N(d s_1) +  \int_{S_n}\int_{S_n}  I\left(\frac{X_{s_1}}{a_m} \in A ,\frac{X_{s_2}}{a_m} \in A \right) N^{(2)}(d s_1,d s_2) \right] \\
 &  & \quad =   \left(\frac{m_n}{\nu |S_n|} \right)^2  \left[ \int_{S_n}  \frac{{p}_{m}(A)}{m_n}  \nu d s_1   +  \int_{S_n}\int_{S_n}  \left[ P\left(\frac{X_{s_1}}{a_m} \in A ,\frac{X_{s_2}}{a_m} \in A \right) - \frac{{p}_{m}(A)^2}{m_n^2}  \right] \nu^2 d s_1 d s_2 \right]  +  E ( \hat{p}_m(A) )^2 \\
 &  & \quad =  \left(\frac{m_n}{ |S_n|} \right)  \left[ \frac{ E( \hat{p}_m(A))}{\nu} +  \int_{S_n - S_n} m_n \left[\frac{{\tau}_{AA,m}(y)}{m_n} - \frac{{p}_{m}(A)^2}{m_n^2}  \right]   \frac{|S_n \cap (S_n - y)|}{|S_n|} dy  \right]  +  E ( \hat{p}_m(A) )^2
\end{eqnarray*}
where the change of variables $s_2-s_1=y$ is used in the last line. Using the above, we show
\begin{eqnarray}
\frac{|S_n|}{m_n}  \text{var} ( \hat{p}_m(A) )  & = &   \frac{E ( \hat{p}_m(A) ) }{\nu} + \int_{S_n - S_n} m_n \left[ \frac{{\tau}_{AA,m}(y)}{m_n} - \frac{{p}_{m}(A)^2}{m_n^2}  \right]   \frac{|S_n \cap (S_n - y)|}{|S_n|} dy \nonumber \\ 
	\label{integration}	& \rightarrow & \frac{\mu(A)}{\nu} + \int_{\mathbb{R}^2} \tau_{AA}(y) dy. 
\end{eqnarray}
To see (\ref{integration}), notice that for a fixed $k>0$
\begin{eqnarray*}
\int_{S_n - S_n} m_n \left[ \frac{{\tau}_{AA,m}(y)}{m_n} - \frac{{p}_{m}(A)^2}{m_n^2}  \right]   \frac{|S_n \cap (S_n - y)|}{|S_n|} dy  & = & \int_{B[0,k)} [\cdot] dy +  \int_{B[k,r_n]} [\cdot] dy +  \int_{ (S_n - S_n) \setminus B[0,r_n]} [\cdot] dy  \\
 & = & A_1 + A_2 + A_3.
\end{eqnarray*}
For each fixed $k >0$, $ \displaystyle \lim_{n \rightarrow \infty} A_1 = \int_{B[0,k)} \tau_{AA}(y) dy $. Now, we show  
\begin{center}
$ \displaystyle \lim_{k \rightarrow \infty}\limsup_{n \rightarrow \infty}( |A_2 + A_3|) = 0$. 
\end{center}
Recall that $A$ is bounded away from the origin. Using (\ref{M2}) and $r_n^2=o(m_n)$,
	\begin{eqnarray*}
 |A_2 | & \leq &  \int_{B[k,r_n]} m_n P(|X_y| > \epsilon a_m , |X_{\textbf{0}}| > \epsilon a_m) dy+ const \;  r_n^2  \frac{{p}_{m}(A)^2}{m_n}  \rightarrow 0
	\end{eqnarray*}
From (\ref{M1}), $ \displaystyle \lim_n |A_3| \leq  \lim_n \int_{\mathbb{R}^2 \setminus B[0,r_n)} m_n \alpha_{1,1}(y) dy = 0$. This completes the proof. \end{proof}

\begin{Proposition} \label{irregular.numerator.1} 
Assume that a stationary regularly varying random field satisfies LUNC. Further, assume the conditions of Proposition \ref{irregular.denominator}, and (\ref{irregular.numerator.integration}) in (\textbf{M3}). Then 
\begin{flushleft}
	(i) $ \displaystyle E \hat{\tau}_{AB,m}(h)  \rightarrow \tau_{AB}(h),$ \\
	\ \\
	(ii)   	$  \displaystyle  \frac{|S_n|\lambda_n^2}{m_n} \; \textnormal{\text{cov}}     \left(\hat{\tau}_{AB,m}(h_1),  \hat{\tau}_{AB,m}(h_2) \right) \rightarrow  \frac{\int_{\mathbb{R}^2} w(y)^2 dy }{\nu^2 }   \left[ \tau_{ AB}(h_1) \; I_{\{ h_1 = h_2 \}} +  \tau_{ A \cap B A \cap B}(h_1) \; I_{\{ h_1 = -  h_2 \}}  \right]$, and\\
	\ \\
	(iii) $ \displaystyle   \frac{|S_n|\lambda_n^2}{m_n}  \;   \textnormal{\text{var}}  \left(\hat{\tau}_{AB,m}(h)  \right) \rightarrow  \frac{1}{\nu^2 }  \left( \int_{\mathbb{R}^2} w(y)^2 dy \right)  \tau_{ AB}(h).$
\end{flushleft}
\end{Proposition}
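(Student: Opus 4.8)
\textbf{Proof plan for Proposition \ref{irregular.numerator.1}.} The plan is to turn every moment of $\hat{\tau}_{AB,m}$ into an ordinary Lebesgue integral of joint exceedance probabilities of $X$ — by conditioning on $X$ and applying the Campbell/Mecke formulas for the factorial moment measures of $N$ (recall $N\perp X$, with intensity measure $\nu\,ds$) — and then to pass to the limit using the regular--variation/LUNC input of Proposition \ref{local.uniform.rv} together with dominated convergence. For (i), Campbell's formula for $N^{(2)}$ gives
\[
E\hat{\tau}_{AB,m}(h)=\frac{m_n}{|S_n|}\int_{S_n}\!\int_{S_n} w_n(h+s_1-s_2)\,P\!\left(\tfrac{X_{s_1}}{a_m}\in A,\ \tfrac{X_{s_2}}{a_m}\in B\right)ds_1\,ds_2 .
\]
By stationarity the integrand depends on $(s_1,s_2)$ only through $v=s_1-s_2$, so the double integral collapses to $\frac{m_n}{|S_n|}\int_{\mathbb{R}^2}w_n(h+v)P(X_{\mathbf 0}\in a_m A,X_{-v}\in a_m B)\,|S_n\cap(S_n-v)|\,dv$; substituting $v=-h+\lambda_n z$ turns $w_n(h+v)\,dv$ into $w(z)\,dz$ and isolates $m_nP(X_{\mathbf 0}\in a_m A,X_{h-\lambda_n z}\in a_m B)$, which tends to $\tau_{AB}(h)$ by Proposition \ref{local.uniform.rv} for each fixed $z$, while $|S_n\cap(S_n-v)|/|S_n|\to1$ by (\ref{Karr.lemma}). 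The integrand is dominated by a constant multiple of $w(z)$ (since $m_nP(X_{\mathbf 0}\in a_m A,\cdot)\le p_m(A)\to\mu(A)$ and $A$ is bounded away from $\mathbf 0$), so dominated convergence gives $E\hat{\tau}_{AB,m}(h)\to\tau_{AB}(h)\int_{\mathbb{R}^2}w(y)\,dy=\tau_{AB}(h)$ (all sets throughout being continuity sets of the relevant limit measures).

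For (ii), write $\hat{\tau}_{AB,m}(h_i)=\frac{m_n}{\nu^2|S_n|}U_i$ with $U_i=\sum_{x\neq y\in N\cap S_n}w_n(h_i+x-y)I(X_x/a_m\in A)I(X_y/a_m\in B)$, and expand $U_1U_2$ by grouping the points $\{s_1,s_2\}$ of $U_1$ and $\{s_3,s_4\}$ of $U_2$ by their coincidence pattern: all four distinct; four \emph{one--coincidence} families (exactly one of $s_3,s_4$ equals one of $s_1,s_2$); and two \emph{full--overlap} families, namely $s_1=s_3,\,s_2=s_4$ and $s_1=s_4,\,s_2=s_3$. Taking expectations through the appropriate factorial moment measure of $N$ and then over $X$, each family becomes $\nu^{\#}\int\!\cdots\!\int(\prod w_n)P(\text{joint exceedances})\prod ds$. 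Only the two full--overlap families survive the scaling $|S_n|\lambda_n^2/m_n$. In the pattern $s_1=s_3,\,s_2=s_4$ the indicator product is $I(X_{s_1}\in a_m A)I(X_{s_2}\in a_m B)$; collapsing to $v=s_1-s_2$ (a factor $|S_n\cap(S_n-v)|$) and writing the weight as $w_n(h_1+v)w_n(h_2+v)$, this term is asymptotically negligible unless $h_1=h_2=:h$, in which case the weight is $\lambda_n^{-4}w((h+v)/\lambda_n)^2$ and, after $v=-h+\lambda_n z$, the term equals $\frac{m_n}{\nu^2|S_n|\lambda_n^2}\tau_{AB}(h)\int_{\mathbb{R}^2}w(y)^2\,dy+o(m_n/(|S_n|\lambda_n^2))$. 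In the pattern $s_1=s_4,\,s_2=s_3$ the indicator product collapses to $I(X_{s_1}\in a_m(A\cap B))I(X_{s_2}\in a_m(A\cap B))$ and the weight is $w_n(h_1+v)w_n(h_2-v)$; by the isotropy $w(-x)=w(x)$ this is negligible unless $h_1=-h_2=:h$, where it again reduces to $\lambda_n^{-4}w((h+v)/\lambda_n)^2$ and, using $m_nP(X_{\mathbf 0}\in a_m(A\cap B),X_{h-\lambda_n z}\in a_m(A\cap B))\to\tau_{A\cap B\,A\cap B}(h)$ from Proposition \ref{local.uniform.rv}, contributes $\frac{m_n}{\nu^2|S_n|\lambda_n^2}\tau_{A\cap B\,A\cap B}(h)\int_{\mathbb{R}^2}w(y)^2\,dy+o(m_n/(|S_n|\lambda_n^2))$. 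Multiplying by $|S_n|\lambda_n^2/m_n$ gives exactly the asserted limit.

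The main obstacle is to show the remaining five families contribute $o(m_n/(|S_n|\lambda_n^2))$ to $\mathrm{cov}(\hat{\tau}_{AB,m}(h_1),\hat{\tau}_{AB,m}(h_2))$. In each one--coincidence family only one position variable survives once the two weights are integrated out (each $\int w_n=1$), so the family is $O\!\bigl(\tfrac{m_n^2}{\nu|S_n|^2}\bigr)\cdot|S_n|\cdot O(1/m_n)=O(m_n/|S_n|)$, because the relevant two-- or three--point exceedance probability is $O(1/m_n)$ (bounded by $P(|X|>\mathrm{const}\cdot a_m)$, and in the near--coincidence subcases such as $h_1=-h_2$ controlled via LUNC); scaled by $|S_n|\lambda_n^2/m_n$ this is $O(\lambda_n^2)\to0$. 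For the all--distinct family, the part that does not cancel against $E\hat{\tau}_{AB,m}(h_1)\,E\hat{\tau}_{AB,m}(h_2)$ is governed by $\mathrm{cov}(I_{s_1}I_{s_2},I_{s_3}I_{s_4})$, which is bounded both by $\mathrm{const}/m_n$ (regular variation, $A,B$ bounded away from $\mathbf 0$) and, when the two pairs are separated, by $4\,\alpha_{2,2}(\cdot)$ via Proposition \ref{covar.upper}; integrating out the two weights, the pair--separation variable, and the remaining translate exactly as in the proof of Proposition \ref{irregular.denominator}, and invoking $\int_{\mathbb{R}^2}\tau_{AB}(y)\,dy<\infty$ and $\int_{\mathbb{R}^2}\alpha_{2,2}(d(\mathbf 0,y))\,dy<\infty$ from (\ref{irregular.numerator.integration}), this contribution once scaled is $O(\lambda_n^2 m_n)\to0$ since $\lambda_n^2 m_n\to0$ under (\textbf{M3}). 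All interchanges of limit and integral are justified by dominated convergence as in (i), with $w$ or $w^2$ as the envelope and the uniform bound $m_nP(\cdots)\le p_m(A)$ on the exceedance probabilities. Finally, (iii) is the case $h_1=h_2=h$ of (ii): then $I_{\{h_1=h_2\}}=1$ and, since $h\in H$ is a non-zero lag, $I_{\{h_1=-h_2\}}=0$, so $\frac{|S_n|\lambda_n^2}{m_n}\mathrm{var}\bigl(\hat{\tau}_{AB,m}(h)\bigr)\to\frac{1}{\nu^2}\bigl(\int_{\mathbb{R}^2}w(y)^2\,dy\bigr)\tau_{AB}(h)$.
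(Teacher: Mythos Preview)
Your proposal is correct and follows essentially the same route as the paper: both condition on $X$ and use the factorial moment expansion of $E[N^{(2)}(ds_1,ds_2)N^{(2)}(ds_3,ds_4)]$ into the seven coincidence patterns (your ``all four distinct'', four ``one--coincidence'', and two ``full--overlap'' families are exactly the paper's $I_1,\ldots,I_7$), identify $I_6,I_7$ as the only surviving terms after the scaling, handle the $I_1$--versus--product-of-means difference via the $\alpha_{2,2}$ bound and $\lambda_n^2 m_n\to 0$, and dispose of $I_2,\ldots,I_5$ by the $O(\lambda_n^2)$ argument (the paper defers this to Lemma~A.4 of \citet{lgs}, which is the same computation you sketch). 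The limit passages under LUNC via Proposition~\ref{local.uniform.rv} and dominated convergence with envelope $w$ or $w^2$ are likewise identical.
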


\begin{proof} 
(i) From (\ref{def.num.est.RF}) and stationarity of $\{X_s, s\in \mathbb{R}^2 \}$ 
\begin{center}
	$ \displaystyle E \hat{\tau}_{AB,m}(h) =  \frac{m_n}{\nu^2} \frac{1}{|S_n|} \int_{S_n} \int_{S_n} w_n(h + s_1 - s_2)  \;  P \left(\frac{X_{\textbf{0}}}{a_m} \in A, \frac{X_{s_2-s_1}}{a_m} \in B \right)   \nu^2 ds_1  ds_2 $
\end{center}
which after making the transformation $  \frac{h + s_1 - s_2}{\lambda_n} = y$ and $ s_2 = u$ becomes
\begin{eqnarray*}
 & & \frac{1}{|S_n|} \int_{ \frac{S_n - S_n +h}{\lambda_n }} \int_{S_n \cap (S_n - \lambda_n y + h)}  w({y})  \; {\tau}_{AB,m}(h - y \lambda_n)  du  dy  \\ 
	& = &    \int_{\frac{S_n - S_n +h}{\lambda_n }}   w({y})  \; {\tau}_{AB,m}(h - y \lambda_n)  
\frac{|S_n \cap (S_n - \lambda_n y + h)|}{|S_n|}  dy   \\
	& \rightarrow &    \tau_{AB}(h).
\end{eqnarray*}
The limit in the last line follows from LUNC and the dominated convergence theorem  since
\begin{center}
 $  \displaystyle  \; {\tau}_{AB,m}(h - y \lambda_n)  \frac{|S_n \cap (S_n - \lambda_n y + h)|}{|S_n|}   \leq   \; {p}_{m}(A)     $ and $  \displaystyle \int_{\mathbb{R}^2} w({y})  \; {p}_{m}(A)  dy < \infty $. 
\end{center}
\ \\
(ii)
For fixed sets $A$ and $B$ let $  \tau_m^*(s_1,s_2,s_3,s_4) = m_n  P \left(  \frac{X_{s_1}}{a_m} \in A,  \frac{X_{s_2}}{a_m} \in B , \frac{X_{s_3}}{a_m} \in A,  \frac{X_{s_4}}{a_m} \in B   \right)$. Then, 
\begin{eqnarray}
\label{trash1} & &  \frac{|S_n|\lambda_n^2}{m_n}  E \left(\hat{\tau}_{AB,m}(h_1) \hat{\tau}_{AB,m}(h_2) \right)  \\
&& = \frac{m_n \lambda_n^2 }{\nu^4 |S_n|} \iiiint\limits_{S_n^4}  w_n(h_1 + s_1 - s_2)   w_n(h_2 + s_3 - s_4)  \;   \frac{\tau_m^*(s_1,s_2,s_3,s_4) }{m_n}  \; E [ N^{(2)}(ds_1, ds_2) N^{(2)}(ds_3, ds_4) ] \nonumber 
\end{eqnarray}
where $N^{(2)}(ds_1, ds_2) = N(ds_1) N(ds_2) I(s_1 \neq s_2)$ and 
\begin{eqnarray} \label{karr.equation}
&& E [N^{(2)}(ds_1, ds_2) N^{(2)}(ds_3, ds_4)]   = \nu^4 ds_1 ds_2 ds_3 ds_4 + \nu^3 ds_1 ds_2 \varepsilon_{s_1}(ds_3) ds_4  + \nu^3 ds_1 ds_2 \varepsilon_{s_2}(ds_3) ds_4  \nonumber   \\
 & &   + \nu^3 ds_1 ds_2 ds_3 \varepsilon_{s_1}(ds_4)  + \nu^3 ds_1 ds_2 ds_3 \varepsilon_{s_2}(ds_4) + \nu^2 ds_1 ds_2  \varepsilon_{s_1}(ds_3) \varepsilon_{s_2}(ds_4)  + \nu^2 ds_1 ds_2   \varepsilon_{s_1}(ds_4) \varepsilon_{s_2}(ds_3) \nonumber \\
\end{eqnarray} 
(see  \citet{karr}). Now, let $I_i \; \text{for} \; i = 1,..., 7,$ be the integral in (\ref{trash1}) corresponding to these seven scenarios of (\ref{karr.equation}). The only cases that contribute to a non-zero limit are $I_1, I_6, $ and $I_7$. {For example, if  $h_1 = h_2$,   
\begin{eqnarray} \label{proof.cov.mid}
 I_6 & = &   \frac{m_n  \lambda_n^2}{\nu^4 |S_n|}    \iiiint\limits_{S_n^4}  w_n(h_1 + s_1 - s_2)   w_n(h_2 + s_3 - s_4)     \;    \frac{\tau_m^*(s_1,s_2,s_3,s_4) }{m_n} \;   \nu^2 ds_1 ds_2  \varepsilon_{s_1}(ds_3) \varepsilon_{s_2}(ds_4) \;  \nonumber \\
 & = &  \frac{ \lambda_n^2}{\nu^2 |S_n|}    \iint \limits_{S_n^2}   w_n(h_1 + s_1 - s_2)   w_n(h_1 + s_1 - s_2)  {\tau}_{AB,m}(s_2-s_1)  ds_1  ds_2     \\
	& = &   \frac{ \lambda_n^2}{\nu^2}    \int_{ \frac{S_n - S_n + h_1}{\lambda_n}}    \frac{1}{\lambda_n^2}   w(y)^2    \tau_{AB,m}(h_1 - \lambda_n y) \frac{|S_n \cap (S_n + h_1 - \lambda_n y) |}{|S_n|} dy   \  \nonumber \\
& \rightarrow &  \frac{1}{\nu^2 }  \left( \int_{\mathbb{R}^2} w(y)^2 dy \right)  \tau_{ AB}(h_1)  \nonumber
\end{eqnarray}
by taking $y = \frac{h_1 + s_1 - s_2}{\lambda_n} \; \text{and}\; u =  s_2$ in the last equation. The convergence is from the dominated convergence theorem. On the other hand, if $h_1 \neq h_2$,
\begin{gather*}
I_6 =  \frac{ \lambda_n^2}{\nu^2}    \int_{ \frac{S_n - S_n + h_1}{\lambda_n}}    \frac{1}{\lambda_n^2}   w(y) w \left( y + \frac{h_2 - h_1}{\lambda_n} \right)    \tau_{AB,m}(h_1 - \lambda_n y) \frac{|S_n \cap (S_n + h_1 - \lambda_n y) |}{|S_n|} dy   \rightarrow 0.
\end{gather*}
Similarly, 
\begin{eqnarray} \label{proof.cov.mid.2}
  I_7 \rightarrow \frac{1}{\nu^2 }  \left( \int_{\mathbb{R}^2} w(y)^2 dy \right)  \tau_{ A \cap B A \cap B}(h_1) .
\end{eqnarray}

Turning to $I_1$, we claim
\begin{eqnarray} 
  \label{cond.numerator.cov}  \left|  I_1 - \frac{|S_n| \lambda_n^2}{m_n}  E \left(\hat{\tau}_{AB,m}(h_1) \right)  E \left(\hat{\tau}_{AB,m}(h_2) \right) \right|   \rightarrow 0.
\end{eqnarray}
To see this, observe that the left-hand side in (\ref{cond.numerator.cov}) is  bounded by
\begin{eqnarray*} 
&& \frac{m_n \lambda_n^2}{\nu^4 |S_n|}   \iiiint \limits_{S_n^4}  w_n(h_1 + s_1 - s_2)   w_n(h_2 + s_3 - s_4)  \nonumber \\
&& \qquad \qquad  \qquad \left| \frac{\tau_m^*(\textbf{0}, s_2-s_1,s_3-s_1,s_4-s_1)}{m_n}  -     \frac{{\tau}_{AB,m}(s_2-s_1)}{m_n}  \frac{{\tau}_{AB,m}(s_4-s_3)}{m_n}  \right| \nu^4 ds_1  ds_2 ds_3  ds_4 \nonumber \\
& \leq & \lambda_n^2 m_n\iiint \limits_{(S_n - S_n)^3}  w_n(h_1 - v_1)   w_n(h_2 - (v_3-v_2)) \left|  \frac{ \tau_m^*({\textbf{0}},v_1,v_2, v_3)  }{m_n}  - \frac{{\tau}_{AB,m}(v_1)}{m_n}  \frac{{\tau}_{AB,m}(v_3-v_2)}{m_n}   \right| dv_1  dv_2 dv_3  
\end{eqnarray*}
where the change of variables $ v_1 = s_2 - s_1, v_2 = s_3 - s_1, v_3 = s_4 - s_1 $ are used. By taking $u = v_2, y_1 = \frac{h_1 -  v_1}{\lambda_n}$ and $ y _2 = \frac{h_2 -  (v_3- v_2)}{\lambda_n} $, the right-hand side of the inequality is equivalent to
\begin{eqnarray} \label{Prop.5.7.equation}
&  &   \lambda_n^2 {m_n}   \int_{\frac{(S_n - S_n) - (S_n - S_n) +h_2}{\lambda_n}} \int_{\frac{S_n - S_n+h_1}{\lambda_n}} \int_{S_n - S_n}  w(y_1)   w(y_2)  \nonumber\\
&& \qquad  \left|  \frac{\tau_m^*({\textbf{0}}, h_1 - y_1\lambda_n,u,u + h_2 - y_2\lambda_n)}{m_n}  - \frac{{\tau}_{AB,m}(h_1 - y_1\lambda_n)}{m_n}  \frac{{\tau}_{AB,m}(h_2 - y_2\lambda_n)}{m_n}  \right| du dy_1  dy_2 \nonumber \\
 & =  &    \lambda_n^2 m_n   \; O \left (\int_{\mathbb{R}^2}  \alpha_{2,2}(||y||) dy \right) 
\end{eqnarray}
To see (\ref{Prop.5.7.equation}), observe that $\min d( \{{\textbf{0}},h_1 - y_1\lambda_n\} \{u,u + h_2 - y_2\lambda_n\}) \leq ||u|| + ||u - h_1 + y_1\lambda_n|| +  ||u + h_2 - y_2\lambda_n||+||u + h_2 - y_2\lambda_n\ -h_1 + y_1\lambda_n||.$ Thus, the integral in (\ref{Prop.5.7.equation}) is bounded by 
\begin{eqnarray*}
&& \int_{\mathbb{R}^2} \alpha_{2,2}(||u||) du  \left(\int_{\mathbb{R}^2}  w(y_1)  dy_1 \right)^2 +   \int_{\frac{S_n - S_n+h_1}{\lambda_n}} \int_{S_n - S_n}  w(y_1) \alpha_{2,2}(||u - h_1 + y_1\lambda_n||) du dy_1  \int_{\mathbb{R}^2}   w(y_2) dy_2   \\
&& +     \int_{\frac{S_n - S_n+h_2}{\lambda_n}} \int_{S_n - S_n}  w(y_2) \alpha_{2,2}(||u - h_2 + y_2\lambda_n||) du dy_2  \int_{\mathbb{R}^2}   w(y_1) dy_1  \\
&& +  \int_{\frac{S_n - S_n+h_2}{\lambda_n}}  \int_{\frac{S_n - S_n+h_1}{\lambda_n}}   \int_{S_n - S_n}  w(y_1) w(y_2)  \alpha_{2,2}(||u +  h_2 - h_1 - y_2 \lambda_n + y_1 \lambda_n||) du dy_1 dy_2  \\
&& =A_1 + A_2 + A_3 + A_4.
\end{eqnarray*}
Notice that $ A_1 =   \int_{\mathbb{R}^2} \alpha_{2,2}(||u||) du.$ Take $x = u - h_1 + y_1\lambda_n,$ then
\begin{center}
$ \displaystyle A_2 \leq  \int_{\frac{S_n - S_n+h_1}{\lambda_n}}  \int_{\mathbb{R}^2}  w(y_1)  \alpha_{2,2}(||x||) dx dy_1 \leq \int_{\mathbb{R}^2}   \alpha_{2,2}(||x||) dx   \int_{\mathbb{R}^2}   w(y_1)  dy_1   = \int_{\mathbb{R}^2}  \alpha_{2,2}(||x||) dx.$
\end{center}
Similarly $A_3 \leq \int_{R^2}  \alpha_{2,2}(||x||) dx$ can be shown. Using the similar change of variable technique, 
\begin{center}
$ \displaystyle A_4 \leq \int_{\frac{S_n - S_n+h_2}{\lambda_n}}  \int_{\frac{S_n - S_n+h_1}{\lambda_n}} \int_{S_n - S_n + h_2 - h_1 - y_2 \lambda_n + y_1 \lambda_n}  w(y_1) w(y_2)  \alpha_{2,2}(||x||) dx dy_1 dy_2    \leq  \int_{\mathbb{R}^2}  \alpha_{2,2}(||x||) dx $
\end{center}
Hence, (\ref{Prop.5.7.equation}) is verified, and (\ref{cond.numerator.cov}) is proved. 

Lastly, using the same argument in Lemma A.4. in \citet{lgs}, we have
\begin{center}
$ \displaystyle I_j \rightarrow 0, $ if $ j = 2,3,4,5 $.
\end{center}
Combining the result (\ref{proof.cov.mid})-(\ref{cond.numerator.cov}), (ii) is proved, which completes the proof. }
\end{proof}


Next, we establish the asymptotic normality for $\hat{\tau}_{AB,m}(h)$.
\begin{Proposition} \label{irregular.numerator.2}
Assume that the conditions of Proposition \ref{irregular.numerator.1} and (\textbf{M3}) hold. Then
\begin{center}
 $ \displaystyle \sqrt{ \frac{|S_n|\lambda_n^2}{m_n}} \left(\hat{\tau}_{AB,m}(h)  -  E \hat{\tau}_{AB,m}(h)  \right) \rightarrow  N(0, \sigma^2),$
\end{center}
where  $ \sigma^2= \frac{1}{\nu^2 }  \left( \int_{\mathbb{R}^2} w(y)^2 dy \right)  \tau_{ AB}(h)$. Furthermore,  if $ E \hat{\tau}_{AB,m}(h)  -  \tau_{AB}(h) = o \left( \sqrt{ { \frac{|S_n|\lambda_n^2}{m_n}} }  \right)$, 
\begin{center}
$ \displaystyle \sqrt{ \frac{|S_n|\lambda_n^2}{m_n}} \left(\hat{\tau}_{AB,m}(h)  -  \tau_{AB}(h)  \right) \rightarrow  N(0, \sigma^2).$
\end{center}
\end{Proposition}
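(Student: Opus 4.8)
The plan is to prove the central limit theorem for $\hat\tau_{AB,m}(h)$ by the classical big-block/small-block method, in the spirit of \citet{Bolthausen} and Lemma A.4 of \citet{lgs}, reducing it to a Lyapunov-type CLT for a near-independent array of block contributions. The variance asymptotics are already available from Proposition \ref{irregular.numerator.1}, and the moment control in (\ref{clt.cond.2}) is exactly what drives the Lyapunov condition, so the two substantive steps are (a) the block decomposition and (b) the decoupling of the blocks via mixing. Once $\sqrt{|S_n|\lambda_n^2/m_n}\,(\hat\tau_{AB,m}(h)-E\hat\tau_{AB,m}(h))\to N(0,\sigma^2)$ is established, the second assertion is a one-line Slutsky argument.

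First I would set up the blocking. Tile $S_n$, up to a boundary strip of area $o(|S_n|)$ (controlled by $|\partial S_n|=O(n)$ from (\ref{lgs.lemma})), by disjoint translates $B_{n,1},\dots,B_{n,K_n}$ of the cube $B_n$ of (\textbf{M3}) with $|B_n|=O(n^{2a})$, separated by corridors of width $c_n$ with $c_n\to\infty$ and $c_n=o(n^{a})$, so that $K_n\asymp|S_n|/|B_n|\to\infty$. Because $\lambda_n\to0$, $h$ is fixed, and $w$ is a bounded density, the kernel $w_n(h+s_1-s_2)$ keeps the double integral in (\ref{def.num.est.RF}) concentrated on pairs with $|s_1-s_2|$ of order $\lambda_n+\|h\|$; hence, writing $W_{n,i}:=\frac{|B_{n,i}|}{|S_n|}\,\hat\tau_{AB,m}(h:B_{n,i})$ for the restriction of the integral to $B_{n,i}\times B_{n,i}$ and $R_n$ for the corridor-plus-boundary remainder (including cross-block pairs), one gets $\hat\tau_{AB,m}(h)=W_n+R_n$ with $W_n:=\sum_{i=1}^{K_n}W_{n,i}$. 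Applying the variance computation of Proposition \ref{irregular.numerator.1} to the sub-region of area $o(|S_n|)$ occupied by corridors and boundary — this is where $\int\tau_{AB}(y)\,dy<\infty$ and $\int\alpha_{2,2}(d(\mathbf 0,y))\,dy<\infty$ from (\ref{irregular.numerator.integration}) are used, since they make that variance scale with the area of the region — gives $\tfrac{|S_n|\lambda_n^2}{m_n}\operatorname{var}(R_n)\to0$, hence by Cauchy--Schwarz $\tfrac{|S_n|\lambda_n^2}{m_n}\operatorname{var}(W_n)\to\sigma^2$, and it suffices to prove the CLT for $W_n$.

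Next I would pass to independence and invoke Lyapunov's theorem. By strict stationarity of $X$ and homogeneity of $N$ the $W_{n,i}$ are identically distributed, and since $N$ is independent across the disjoint (slightly enlarged) blocks, the dependence among $\{W_{n,i}\}$ is governed only by the $\alpha$-mixing of $X$ between regions separated by $c_n$, each carrying $O(\nu|B_n|)=O(n^{2a})$ observations; the normalization $\alpha_{l,l}(r)/l^2$ in (\ref{clt.cond.3}) is tailored precisely to this, so a standard coupling (as in Lemma A.4 of \citet{lgs}) replaces $W_n$ by $\sum_i\tilde W_{n,i}$ with the $\tilde W_{n,i}$ independent, $\tilde W_{n,i}\overset{d}{=}W_{n,i}$, and total error vanishing once $c_n$ grows fast enough while staying $o(n^{a})$ (here $m_n=o(n^{a})$ is also used to keep the block size and threshold coordinated). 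For the Lyapunov ratio, (\ref{clt.cond.2}) gives $E|W_{n,i}-EW_{n,i}|^{2+\delta}\le C_\delta(|B_n|/|S_n|)^{2+\delta}\big(m_n/(|B_n|\lambda_n^2)\big)^{(2+\delta)/2}$, so with $\operatorname{var}W_n\asymp\sigma^2 m_n/(|S_n|\lambda_n^2)$ and $K_n\asymp|S_n|/|B_n|$,
\begin{equation*}
\frac{\sum_{i=1}^{K_n}E|W_{n,i}-EW_{n,i}|^{2+\delta}}{\big(\operatorname{var}W_n\big)^{(2+\delta)/2}}\;\asymp\;K_n\Big(\tfrac{|B_n|}{|S_n|}\Big)^{2+\delta}\Big(\tfrac{|S_n|}{|B_n|}\Big)^{(2+\delta)/2}\;=\;K_n^{-\delta/2}\;\longrightarrow\;0 .
\end{equation*}
Lyapunov's theorem then yields $\sqrt{|S_n|\lambda_n^2/m_n}\,(W_n-EW_n)\to N(0,\sigma^2)$, and since $R_n$ is $L^2$-negligible after normalization, the first assertion follows. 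The second follows by writing $\hat\tau_{AB,m}(h)-\tau_{AB}(h)=(\hat\tau_{AB,m}(h)-E\hat\tau_{AB,m}(h))+(E\hat\tau_{AB,m}(h)-\tau_{AB}(h))$ and noting the deterministic bias term, multiplied by $\sqrt{|S_n|\lambda_n^2/m_n}$, tends to $0$ by hypothesis, so Slutsky's theorem applies.

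The main obstacle is the block-approximation step, which has two coupled parts: (i) making the kernel concentration genuinely decouple the double integral into the $W_{n,i}$ with an $L^2$-negligible corridor and cross-block remainder — this needs the corridor/boundary area to be $o(|S_n|)$ together with the area-scaling of the variance there, which is exactly the role of (\ref{irregular.numerator.integration}); and (ii) controlling the residual dependence between blocks each carrying a polynomially growing number of observations, which is where the $l^2$-normalized mixing rate (\ref{clt.cond.3}), the corridor width $c_n=o(n^a)$, and $m_n=o(n^a)$ must be balanced so that the coupling error vanishes. Everything else — the variance identification and the Lyapunov bound — is handed to us verbatim by Proposition \ref{irregular.numerator.1} and condition (\ref{clt.cond.2}). (The joint statement over $h\in H$ in Theorem \ref{theorem:clt:2} then follows by the Cram\'er--Wold device applied to $\sum_{h\in H}c_h\hat\tau_{AB,m}(h)$, whose kernel is $\sum_{h}c_h w_n(h+\cdot)$, using the covariances from Proposition \ref{irregular.numerator.1}(ii).)
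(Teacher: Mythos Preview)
Your proposal is correct and follows essentially the same big-block/small-block strategy as the paper: block $S_n$ into inner cubes $B_n^i$ separated by corridors, show the corridor/remainder contribution is $L^2$-negligible so the block sum inherits the variance $\sigma^2$ from Proposition~\ref{irregular.numerator.1}, decouple the blocks using the mixing condition (\ref{clt.cond.3}), and then apply Lyapunov via (\ref{clt.cond.2}). The only minor differences are in execution: the paper establishes negligibility of the remainder by showing $\operatorname{var}(\tilde A_n)$, $\operatorname{cov}(\tilde A_n,\tilde a_n)$, and $\operatorname{var}(\tilde a_n)$ all converge to $\sigma^2$ (rather than bounding $R_n$ directly), and it carries out the decoupling by an explicit characteristic-function telescoping argument rather than invoking a coupling lemma.
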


\begin{proof} We follow \citet{lgs} with focusing our attention to $\mathbb{R}^2$ and using a classical blocking technique. Let $D_n^i$ be non-overlapping cubes that divide $S_n$ for $i = 1,..., k_n$, where $ k_n=  {|S_n|}/{|D_n^i|}$.  Within each $D_n^i$, $B_n^i$ is an inner cube sharing the same center and $d( \partial D_n^i,B_n^i) \geq n^{\eta}$. Let $ |D_n^i|=n^{2 \alpha}$ and $|B_n^i|= (n^{\alpha}- n^{\eta})^2$ where $6/ (2 + \epsilon)< \eta < \alpha <1$ for some $\epsilon> \frac{2 + 4 \alpha}{\eta}$ . {Let $k_n'$ be the additional number of cubes to cover $S_n$. From  Lemma A.3. in \citet{lgs}, 
\begin{gather}
\label{cond.boundary}  k_n = O(n^{2(1 - \alpha)}) \quad \text{and} \quad k_n' = O(n^{1 - \alpha}). 
\end{gather} }
Now define 
\begin{eqnarray*}
\displaystyle A_n & = & \sqrt{ \frac{m_n   \lambda_n^2}{|S_n| } }  \frac{1}{\nu^2}  \iint \limits_{S_n \times S_n}   w_n(h + s_1 - s_2)  \;  I\left(\frac{X_{s_1}}{a_m} \in A \right)    I\left(\frac{X_{s_2}}{a_m} \in B \right)  N^{(2)}(ds_1, ds_2), \\
 \displaystyle a_{ni}  & = & \sqrt{ \frac{m_n   \lambda_n^2}{ |S_n| } }  \frac{1}{\nu^2}   \iint \limits_{B_n^i \times B_n^i  }   w_n(h + s_1 - s_2)  \;  I\left(\frac{X_{s_1}}{a_m} \in A \right)    I\left(\frac{X_{s_2}}{a_m} \in B \right)  N^{(2)}(ds_1, ds_2), \\
& = & \frac{1}{\sqrt{k_n}} \sqrt{ \frac{m_n   \lambda_n^2}{|D_n^i| }}  \frac{1}{\nu^2}  \iint \limits_{B_n^i \times B_n^i }  w_n(h + s_1 - s_2)  \;  I\left(\frac{X_{s_1}}{a_m} \in A \right)    I\left(\frac{X_{s_2}}{a_m} \in B \right)  N^{(2)}(ds_1, ds_2), \\
\tilde{A}_n & = & {A_n} - E {A_n}, \quad \tilde{a}_{ni} = a_{ni} - E a_{ni},  \quad   {a}_n =  \sum_{i=1}^{k_n}  {a}_{ni}, \quad  \tilde{a}_n =  \sum_{i=1}^{k_n} \tilde{a}_{ni}, \quad \tilde{a}_n'= \sum_{i=1}^{k_n}  \tilde{a}_{ni}',
\end{eqnarray*}
where $\tilde{a}_{ni}'$ denotes an independent copy of $\tilde{a}_{ni}$. \\

\textbf{Step 1}. Show $  \text{var}(\tilde{A}_n - \tilde{a}_n)\rightarrow 0$. \\
{We will prove Step 1 by showing:

 i) $\text{var}(\tilde{A}_n) \rightarrow \frac{1}{\nu^2 }  \left( \int_{\mathbb{R}^2} w(y)^2 dy \right)  \tau_{ AB}(h)$,

 ii) $\text{cov}(\tilde{A}_n, \tilde{a}_n)  \rightarrow \frac{1}{\nu^2 }  \left( \int_{\mathbb{R}^2} w(y)^2 dy \right)  \tau_{ AB}(h)$, and

iii) $\text{var}(\tilde{a}_n)  \rightarrow \frac{1}{\nu^2 }  \left( \int_{\mathbb{R}^2} w(y)^2 dy \right)  \tau_{ AB}(h)$.\\ 
\ \\
 i) This follows from Proposition \ref{irregular.numerator.1} (iii). \\
\ \\
ii) Recall $ \tau_m^*(s_1,s_2,s_3,s_4) $ defined in Proposition \ref{irregular.numerator.1} (ii). Then 
 \begin{eqnarray*}
&& E \left(    {A}_n    a_n \right) \\
& = &   \frac{\lambda_n^2}{\nu^4 |S_n|}  \sum_{i=1}^{k_n} \; \iiiint \limits_{S_n \times S_n \times B_n^i  \times B_n^i}      w_n(h + s_1 - s_2)  \; w_n(h + s_3 - s_4)   \tau_m^*(s_1,s_2,s_3,s_4)   E[ N^{(2)}(ds_1, ds_2)      N^{(2)}(ds_3, ds_4) ] \\
& = &  \frac{\lambda_n^2}{\nu^4 |S_n|}  \sum_{i=1}^{k_n} \;   \left[ \quad   \iiiint \limits_{S_n \setminus  B_n^i \times S_n \setminus  B_n^i \times B_n^i \times B_n^i}    \cdot +   \iiiint \limits_{S_n \setminus  B_n^i \times   B_n^i \times B_n^i \times B_n^i }    \cdot    +    \iiiint \limits_{ B_n^i \times S_n \setminus  B_n^i  \times B_n^i \times B_n^i }    \cdot  +   \iiiint \limits_{(B_n^i)^4}    \cdot  \right] \\
& = &   D_1 + D_2 + D_3 + D_4 \\
& = & \sum_{i = 1}^4 \sum_{j=1}^7 D_i^j
\end{eqnarray*} 
where $ D_i^j$ be the integral in $D_i$ corresponding to the seven cases of $ E[ N^{(2)}(ds_1, ds_2)      N^{(2)}(ds_3, ds_4) ]$  as in (\ref{karr.equation}) for $i = 1,...,4$ and $j = 1,..., 7$. As shown in the proof of Proposition \ref{irregular.numerator.1} (ii), non-zero contributions only arise  when $j = 1,6,$ and $7$. By the similar arguments in (\ref{cond.numerator.cov}), 
\begin{center}
$|\sum_{i = 1}^4  D_i^1  - E({A}_n)E({a}_n)  | \rightarrow 0$.
\end{center}
Since $j=6 $ and $7$ only occur when $s_1, s_2, s_3,s_4 \in B_n^i$, we only consider $D_4^6 + D_4^7$ which equals to  
 \begin{eqnarray*}
& & \frac{\lambda_n^2}{\nu^4 |S_n|}  \sum_{i=1}^{k_n} \;     \iint \limits_{B_n^i \times B_n^i }       \left[ w_n(h + s_1 - s_2)^2    +  w_n(h + s_1 - s_2)  w_n(h + s_2 - s_1)    \right] {\tau}_{AB,m}(s_2 - s_1)  {\nu^2} ds_1 ds_2     \\
& = &    \frac{m_n  \lambda_n^2}{\nu^2 |D_n^1|}   \iint \limits_{ B_n^1 \times B_n^1}       \left[ w_n(h + s_1 - s_2)^2    +  w_n(h + s_1 - s_2)  w_n(h + s_2 - s_1)    \right] {\tau}_{AB,m}(s_2 - s_1)    ds_1 ds_2     \\
& \rightarrow & \frac{1}{v^2 } \int_{\mathbb{R}^2}   w({y})^2   dy  \;  \tau_{AB}(h).
\end{eqnarray*} 
The convergence is derived from arguments in (\ref{proof.cov.mid}) and (\ref{cond.numerator.cov}). Thus, we conclude
\begin{gather*}
\text{cov}(\tilde{A}_n, \tilde{a}_n)= \left( \sum_{i = 1}^4 \sum_{j=1}^7 D_i^j \right) - E({A}_n)E({a}_n) =  D_4^6 + D_4^7 +   o(1)  \rightarrow \frac{1}{\nu^2 }  \left( \int_{\mathbb{R}^2} w(y)^2 dy \right)  \tau_{ AB}(h).
\end{gather*}
\ \\
 iii) Let $   \text{var}(\tilde{a}_n) = \sum_{ i=1}^{ k_n}   \text{var}(\tilde{a}_{ni})+ \sum_{1 \leq i \neq j \leq k_n}  \text{cov}(\tilde{a}_{ni} ,\tilde{a}_{nj}) $.  Note from Proposition \ref{irregular.numerator.1}  (iii) that
\begin{center}
	$\displaystyle \sum_{i=1}^{k_n}   \text{var}(\tilde{a}_{ni}) =  k_n \text{var}( {a}_{n1})   \rightarrow \frac{1}{\nu^2 }  \left( \int_{\mathbb{R}^2} w(y)^2 dy \right)  \tau_{ AB}(h).$  
\end{center}


Also note that since $\tilde{a}_{ni}$ and $\tilde{a}_{nj}$ are integrals over disjoint sets for $i \neq j$ and $X_s$ is independent of $N$, $E [  \tilde{a}_{ni}|N ] $ and $E [\tilde{a}_{nj}|N ] $ are independent. Thus, 
 \begin{eqnarray*}
\sum_{1 \leq i \neq j \leq k_n} |   \text{cov} (\tilde{a}_{ni}, \tilde{a}_{nj})  | & = & \sum_{1 \leq i \neq j \leq k_n} |   E \{ \text{cov} (\tilde{a}_{ni}, \tilde{a}_{nj}|N) \}   + \text{cov} \{ E(\tilde{a}_{ni}|N) ,   E(\tilde{a}_{nj}|N)  \} |  \\
& = & \sum_{1 \leq i \neq j \leq k_n} |   E \{ \text{cov} (\tilde{a}_{ni}, \tilde{a}_{nj}|N) \}  |.  
\end{eqnarray*}
Notice from  Proposition \ref{covar.upper} and $|   a_{ni} |   \leq   {\sqrt{  \frac{m_n   \lambda_n^2}{ |S_n| } }}{ |B_n^i|}$ that 
 \begin{eqnarray*}
 E \{ \text{cov} (\tilde{a}_{ni}, \tilde{a}_{nj}|N) \} 	& \leq &    const  \;     \frac{m_n   \lambda_n^2}{ |S_n| }  |B_n^i|  |B_n^j| \ \; |   E ( \alpha_{M,M } (n^{\eta})|N)  |   \leq const  \;     \frac{m_n   \lambda_n^2}{ |S_n| }  |B_n^1|^2  \;E (M^2)   n^{ - \epsilon \eta}  \\
\end{eqnarray*}
where $ M =  \max\{N(B_n^i),N(B_n^j)\}$ and the last inequality is from  (\ref{clt.cond.3}). Since $ k_n = |S_n|/ |D_n^1|$ where $ |S_n| = n^2, |D_n^1| = n^{2 \alpha},  |B_n^1| = O(n^{2 \alpha}) $, 
 \begin{eqnarray*}
\sum_{1 \leq i \neq j \leq k_n} |   \text{cov} (\tilde{a}_{ni}, \tilde{a}_{nj})  |     
 \leq   const  \;   k_n^2    \frac{m_n   \lambda_n^2}{ |S_n| }   { |B_n^1|^2}   \; |B_n^1|^2   n^{ - \epsilon \eta} 
=  O \left( m_n   \lambda_n^2    n^{2+ 4 \alpha  - \epsilon \eta} \right) 
\end{eqnarray*}
which converges to 0 as $m_n   \lambda_n^2 \rightarrow 0 $ and $\epsilon> \frac{2 + 4 \alpha}{\eta}$.} \\


\textbf{Step 2}. Show {$| \phi_n(x) - \phi_n'(x) | \rightarrow 0$} where $\phi_n(x)$ and $\phi_n'(x)$ are the characteristic functions of $\tilde{a}_n$ and $\tilde{a}_n'$. 

Analogously to the idea presented in (6.2) in \citet{Richard09}, 
\begin{center}
$\displaystyle	| \phi_n(x) - \phi_n'(x) |  
= \left|    \sum_{l = 1}^{k_n} E \prod_{j=1}^{l-1} e^{i x \frac{\tilde{a}_{nj}}{\sqrt{k_n}}} \left(e^{i x \frac{\tilde{a}_{nl}}{\sqrt{k_n}}} - e^{i x \frac{\tilde{a}'_{nl}}{\sqrt{k_n}}} \right) \prod_{j=l+1}^{k_n} e^{i x \frac{\tilde{a}'_{nj}}{\sqrt{k_n}}} \right|  \leq  \sum_{l = 1}^{k_n} \left| \text{cov} \left( \prod_{j=1}^{l-1} e^{i x \frac{\tilde{a}_{nj} }{\sqrt{k_n}}} , e^{i x \frac{\tilde{a}_{nl}}{\sqrt{k_n}}} \right) \right| $
\end{center}
Using the same technique in Step 1 iii), 
\begin{center}
$   \left| \text{cov} \left( \prod_{j=1}^{l-1} e^{i x \frac{\tilde{a}_{nj} }{\sqrt{k_n}}} , e^{i x \frac{\tilde{a}_{nl}}{\sqrt{k_n}}} \right) \right|  \leq     {const} \; E  \left( \alpha_{M,M} (n^{\eta}) \right)  
	 \leq   {const} \;   E \left( M  ^2 \right)  n^{ -\epsilon \eta}  
	 \leq  {const} \;    l^2 | n^{ 2\alpha } |^2 n^{ -\epsilon \eta}  $
\end{center}
where  $M = N( \cup_{j = 1}^{l} B_{n}^j).$ The second and the last inequality is from (\ref{clt.cond.3}) and $|B_n^1|=O(n^{2 \alpha})$ respectively. Hence, from $k_n = n^{2 - 2 \alpha}$, we have
\begin{center}
$ \displaystyle | \phi_n(x) - \phi_n'(x) |   \leq  const \; \sum_{l = 1}^{k_n}   l^2 | n^{ 2\alpha } |^2 n^{ -\epsilon \eta}   \leq O (n^{6 - 2 \alpha - \epsilon \eta})$
\end{center}
which converges to 0 from $6/ (2 + \epsilon) < \eta < \alpha <1.$ \\

\textbf{Step 3}. Show the central limit theorem holds for $\tilde{a}_n' $. 

Let $ I_{ni} = \int_{B_n^i} \int_{B_n^i} w_n(h + s_1 - s_2)  \;  I\left(\frac{X_{s_1}}{a_m} \in A \right)    I\left(\frac{X_{s_2}}{a_m} \in B \right) N^{(2)}(ds_1, ds_2).$ By (\ref{clt.cond.2}), we have
\begin{eqnarray*}
 E | \sqrt{k_n}  \tilde{a}_{ni}' |^{2 + \delta}  &=&  E \left| \sqrt{ \frac{m_n   \lambda_n^2}{|D_n^i| }}  \frac{1}{\nu^2}  \left[ I_{ni}  - E(I_{ni} ) \right] \right|^{2 + \delta}  \\ 
&=&  E \left| \sqrt{ \frac{|B_n^i|^2  \lambda_n^2}{|D_n^i|   m_n}}   \left[ \hat{\tau}_{AB,m}(h:B_n^i)  - E(\hat{\tau}_{AB,m}(h:B_n^i) ) \right] \right|^{2 + \delta}   <  C_{\delta} 
\end{eqnarray*}
As $(\tilde{a}_{ni}')$ is triangular array of independent random variables with $\text{var}(\sum_{i = 1}^{k_n} \tilde{a}_{ni}' )  = \sigma_n^2 \rightarrow \sigma^2$, and
\begin{gather*}
 \frac{   \sum_{i=1}^{k_n} E|\tilde{a}_{ni}'|^{2+\delta}}{    (  \sigma_n )^{2+\delta}   }  \leq     \frac{ k_n   k_n^{-(1+\delta/2)} C_{\delta} }{  (  \sigma_n )^{2+\delta} } \rightarrow 0,
\end{gather*}
Lyapunov's condition is satisfied and hence the central limit theorem holds. \end{proof} 

\begin{proof}  [Proof of Theorem \ref{theorem:clt:2}]

Proposition \ref{irregular.denominator} implies $\hat{p}_m(A) \xrightarrow{p} \mu(A)$. By Slutsky's theorem and Proposition \ref{irregular.numerator.2},
\begin{eqnarray*}
\sqrt{\frac{|S_n|\lambda_n^2}{m_n}  } \left( \frac{\hat{\tau}_{AB,m}(h)}{\hat{p}_m(A)} - \frac{{\tau}_{AB,m}(h)}{\hat{p}_m(A)} \right)  = \sqrt{\frac{|S_n|\lambda_n^2}{m_n}  } \left( \hat{\rho}_{AB,m}(h)  - \frac{{\tau}_{AB,m}(h)}{\hat{p}_m(A)} \right) & \rightarrow&  N(0, {\sigma}^2/\mu(A)^2).
\end{eqnarray*}
Recall from Proposition \ref{irregular.denominator} that $ \displaystyle \text{var}({\hat{p}_m(A)} ) = O\left( {m_n} / {|S_n|} \right)$. Then
\begin{eqnarray*}
\sqrt{\frac{|S_n|\lambda_n^2}{m_n}  } \left( \hat{\rho}_{AB,m}(h)  - \frac{{\tau}_{AB,m}(h)}{\hat{p}_m(A)} \right)    = \sqrt{\frac{|S_n|\lambda_n^2}{m_n}  } \left( \hat{\rho}_{AB,m}(h) - {\rho}_{AB,m}(h) \right) +o_p(1).
\end{eqnarray*}
Thus, the central limit theorem for $ \sqrt{\frac{|S_n|\lambda_n^2}{m_n}  } \left( \hat{\rho}_{AB,m}(h) - {\rho}_{AB,m}(h) \right) $ is proved. The joint normality (\ref{final}) is established using the Cram\'{e}r-Wold device. \end{proof}

\subsection{Appendix C:  Example \ref{example.br.2}} \label{appendix C}

First, we show that $X_s$ satisfies LUNC in (\ref{note.2}). Notice that the process has continuous sample paths a.s. since the Gaussian process $\{W_s - \delta(s), s \in \mathbb{R}^2\}$ in (\ref{Def.MS})  has continuous sample paths. Notice from \citet{lindgren(2012)}, Section 2.2, that a Gaussian process with a continuous correlation function satisfying (\ref{cond.corr.BR}) has continuous sample paths.

From (\ref{Def.MS}), let $X_s = U_s^1 \vee U_s^2$, where $U_s^1 = \Gamma_1^{-1} Y_s^1 $ and $U_s^2 = \sup_{j \geq 2} \Gamma_j^{-1} Y^j_s$. Then 
\begin{eqnarray*}
nP \left( \sup_{||s|| < \delta'} \frac{|X_s - X_{\textbf{0}}|}{a_n} > \delta  \right) & = &   nP \left( \sup_{||s|| < \delta'}  |U_s^1 \vee U_s^2 - U_{\textbf{0}}^1 \vee U_{\textbf{0}}^2 |  > a_n \delta  \right) \\
& \leq &  n P \left( \sup_{||s|| < \delta'}  |U_s^1 - U_{\textbf{0}}^1 |  > \frac{ a_n \delta}{2}  \right) + n P \left( \sup_{||s|| < \delta'}  | U_s^2 - U_{\textbf{0}}^2 |  > \frac{a_n \delta}{2}  \right)\\
 & =& A_1 + A_2.
\end{eqnarray*}
Since $E | \sup_{||s|| < \delta'} {|Y(s)|} | <  \infty$ (see Proposition 13 in \citet{kabluchko}), we can apply the dominated convergence theorem to obtain
\begin{eqnarray*}
A_1 = nP \left( \Gamma_1 <  \frac{2 \sup_{||s|| < \delta'} {|Y_s^1 - Y_{\textbf{0}}^1|}}{\delta a_n}  \right)  & = & n \int   \left(  1 - e^{-z/ \delta a_n } \right)  g(Z)   dZ \rightarrow\frac{2 E(\sup_{||s|| < \delta'} {|Y_s - Y_{\textbf{0}}|})}{\delta}  \rightarrow  0, 
\end{eqnarray*}
where $ Z= 2 \sup_{||s|| < \delta'} {|Y_s - Y_{\textbf{0}}|} $. 

To show $A_2 \rightarrow 0$, we follow the arguments in \citet{Richard08}. 
\begin{eqnarray*}
A_2 =   nP \left( \sup_{||s|| < \delta'} \bigvee_{j \geq 2}^{\infty} \Gamma_j^{-1}  {|Y^j_s - Y^j_s|} > \frac{a_n \delta}{2}  \right) 
& \leq & n \sum_{j = 2}^{\infty} P \left(   2 {\sup_{||s|| < \delta'} {|Y_s|}} > \Gamma_j  {\delta a_n}/2  \right) \\
& = & n \int   \left(  \sum_{j \geq 2}^{\infty}  P \left(  4  y > \Gamma_j  {\delta a_n}  \right)   \right)  P \left(  {\sup_{||s|| < \delta'} {|Y_s|}}  \in dy \right)\\
& = & n \int_{0}^{\infty}   \left( \frac{4y}{\delta a_n} - \left(1 - e^{-\frac{4y}{\delta a_n}} \right)    \right)  P \left(  {\sup_{||s|| < \delta'} {|Y_s|}}  \in dy \right) 
\end{eqnarray*}
{The last line is from $ E T[0, \frac{4y}{\delta a_n}] = \sum_{j=1}^{\infty} P \left( \Gamma_j  < \frac{ 4y}{\delta a_n} \right)  = \frac{4y}{\delta a_n},$ where $ T =  \sum_{j=1}^{\infty} \epsilon_{\Gamma_j}$ is a homogeneous point process.} The dominated convergence theorem applies as $f_n(y) = n \left(  \frac{4y}{\delta a_n} - (1 - e^{-\frac{4y}{\delta a_n}} ) \right) \leq cy $ for some $c > 0 $, all $y >0$ and $f_n(y) \rightarrow 0$ as $n \rightarrow 0$, and $E  {\sup_{||s|| < \delta'} {|Y_s|}} < \infty$ from \citet{kabluchko}.

Now we check conditions (\ref{M2})-(\ref{clt.cond.3}). {Recall from (\ref{BR.alpha.bound}) that $\alpha_{c,  c} (h)   \leq    const \;  \frac{1}{\sqrt{||h||^{\alpha}}}  e^{- {\theta ||h||^{\alpha}}/2} $ holds for the process. For convenience in the calculations that follow, set $g(h) =  \frac{1}{\sqrt{||h||^{\alpha}}}  e^{- { \theta ||h||^{\alpha}}/2}$. We will find the sufficient conditions for  (\ref{M2})-(\ref{clt.cond.3}). For (\ref{M2}),
\begin{eqnarray}
&& \label{M4'}  \int_{\mathbb{R}^2} g(y) dy < \infty
\end{eqnarray}
is sufficient. To see this, infer from (\ref{BR.calculation}) that
\begin{eqnarray*}
m_n     P(X_y > \epsilon a_m, X_{\textbf{0}} > \epsilon a_m) = m_n \left[ 1 - 2 e^{- 1/a_m} + e^{- 2 \Phi(\sqrt{\delta(h)})/a_m} \right] = \frac{2 m_n}{a_m}\bar{\Phi}(\sqrt{\delta(h)})  + O \left(\frac{m_n}{a_m^2}\right).
\end{eqnarray*}
Thus
\begin{eqnarray*}
m_n  \int_{B[k,r_n]}  P(X_y > \epsilon a_m, X_{\textbf{0}} > \epsilon a_m)dy 
& = &  \int_{B[k,r_n]} \frac{2 m_n}{a_m}  \bar{\Phi}( \sqrt{\delta(y)}) dy  + O \left( \frac{r_n^2}{m_n} \right)   \\
& \leq & const \; \int_{B[k,\infty]} g( y) dy + o(1), 
\end{eqnarray*}
where the last inequality is from (\ref{BR.alpha.bound}). 

From (\ref{BR.alpha.bound}), the condition (\ref{M1}) is satisfied if 
\begin{eqnarray}
&& \label{M1'}      \int_{\mathbb{R}^2 \setminus B[0,r_n)}  m_n  g(y) dy \rightarrow  0.
\end{eqnarray}
Similarly, using (\ref{BR.alpha.bound}), the second condition in (\ref{irregular.numerator.integration}) is implied if (\ref{M4'}) holds.
The condition  (\ref{clt.cond.3}) is checked immediately from (\ref{dombry}) since
\begin{gather*}
\label{M5'} \sup_l \frac{\alpha_{l, l} (||h||)}{l^2}   \leq    const \; \frac{1}{\sqrt{||h||^{\alpha}}}  e^{- {\theta ||h||^{\alpha}}/2}  = O(||h||^{- \epsilon}).
\end{gather*}
We check the condition (\ref{clt.cond.2}) with $\delta = 1$ is satisfied if (\ref{M000}) assumed, but we skip this as it is tedious. Hence, it suffices to find conditions under which (\ref{M4'}) and (\ref{M1'}) hold.  

\begin{remark}
\YB{If the process is regularly varying in the space of continuous functions in every compact set, then LUNC is satisfied. See \citet{Hult(2006)}, Theorem 4.4.} 
\end{remark}

\begin{Proposition} For  Example \ref{example.br.2}, the conditions (\ref{M4'}) - (\ref{M1'}) hold if $ \log m_n = o(r_n^a)$.  
\end{Proposition}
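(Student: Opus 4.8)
The plan is to reduce both (\ref{M4'}) and (\ref{M1'}) to elementary one-dimensional tail estimates via polar coordinates in $\mathbb{R}^2$. Writing $g(y)=\|y\|^{-\alpha/2}e^{-\theta\|y\|^\alpha/2}$ and using isotropy of $g$,
\[
\int_{\|y\|\ge R} g(y)\,dy \;=\; 2\pi\int_R^\infty \rho^{\,1-\alpha/2}\,e^{-\theta\rho^\alpha/2}\,d\rho ,\qquad R\ge 0 .
\]
The one fact that drives everything is that, since $\alpha>0$, the map $\rho\mapsto \rho^{\,1-\alpha/2}e^{-\theta\rho^\alpha/4}$ is continuous on $[0,\infty)$ and vanishes at infinity, hence is bounded by some constant $K=K(\alpha,\theta)<\infty$; equivalently $\rho^{\,1-\alpha/2}e^{-\theta\rho^\alpha/2}\le K\,e^{-\theta\rho^\alpha/4}$ for all $\rho>0$.

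First I would dispatch (\ref{M4'}). Splitting the radial integral at $\rho=1$: on $(0,1]$ we have $g(y)\le\|y\|^{-\alpha/2}$, and $\int_{\|y\|\le1}\|y\|^{-\alpha/2}\,dy=2\pi\int_0^1\rho^{\,1-\alpha/2}\,d\rho<\infty$ precisely because $1-\alpha/2>-1$, which holds for every $\alpha\in(0,2]$; on $[1,\infty)$ the bound above gives $\int_1^\infty\rho^{\,1-\alpha/2}e^{-\theta\rho^\alpha/2}\,d\rho\le K\int_0^\infty e^{-\theta\rho^\alpha/4}\,d\rho<\infty$. Thus $\int_{\mathbb{R}^2}g(y)\,dy<\infty$ with no growth hypothesis at all, so (\ref{M4'}) holds; via (\ref{BR.alpha.bound}) this also dominates the remaining integrability requirements $\int\tau_{AA}$, $\int\tau_{AB}$ and $\int\alpha_{2,2}$ appearing in (\ref{M4}) and (\ref{irregular.numerator.integration}), as already noted in the text.

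Next I would prove (\ref{M1'}), which is the only place the hypothesis is used. For $\rho\ge r_n$ one has $\rho^\alpha\ge r_n^\alpha$, so $e^{-\theta\rho^\alpha/4}=e^{-\theta\rho^\alpha/8}\cdot e^{-\theta\rho^\alpha/8}\le e^{-\theta r_n^\alpha/8}\,e^{-\theta\rho^\alpha/8}$, whence
\[
\int_{\mathbb{R}^2\setminus B[0,r_n)} g(y)\,dy \;\le\; 2\pi K\,e^{-\theta r_n^\alpha/8}\int_0^\infty e^{-\theta\rho^\alpha/8}\,d\rho \;=\; C_0\,e^{-\theta r_n^\alpha/8}
\]
for a finite constant $C_0=C_0(\alpha,\theta)$. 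Consequently $m_n\int_{\mathbb{R}^2\setminus B[0,r_n)} g(y)\,dy\le C_0\,m_n e^{-\theta r_n^\alpha/8}=C_0\exp\{\log m_n-\theta r_n^\alpha/8\}\to 0$, because $\log m_n=o(r_n^\alpha)$ forces $\log m_n-\theta r_n^\alpha/8\to-\infty$ (note $r_n^\alpha\to\infty$ is itself implied by $m_n\to\infty$ and $\log m_n=o(r_n^\alpha)$). This gives (\ref{M1'}). Together with the verification of (\ref{clt.cond.3}) from (\ref{dombry}) already carried out in the text, this exhausts the hypotheses of Theorem \ref{theorem:clt:2}, so Example \ref{example.br.2} follows.

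I do not anticipate a genuine obstacle: the estimates are routine incomplete-Gamma-type tail bounds. The only step needing a little care is the bookkeeping of the exponent --- part of the Gaussian-type factor $e^{-\theta\rho^\alpha/2}$ is spent absorbing the polynomial prefactor $\rho^{1-\alpha/2}$ into the constant $K$, and a further slice is pulled out to produce the decaying factor $e^{-\theta r_n^\alpha/8}$, so that the rate depends on $r_n$ only through $r_n^\alpha$, which is exactly what makes $\log m_n=o(r_n^\alpha)$ the natural sufficient condition. A secondary point worth flagging is that (\ref{M4'}) uses $\alpha\le 2$ only to keep the origin singularity $\|y\|^{-\alpha/2}$ integrable over a planar neighbourhood (any $\alpha<4$ would suffice, and in $\mathbb{R}^d$ any $\alpha<2d$); for Brown-Resnick exponents $\alpha\in(0,2]$ this is automatic.
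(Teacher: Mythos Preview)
Your argument is correct and follows essentially the same route as the paper's proof: pass to polar coordinates, recognize the radial integral as a Gamma-type integral (the paper makes the substitution $t=r^{\alpha}/8$ directly, whereas you split at $\rho=1$ and absorb the polynomial into the exponential), and then bound the tail beyond $r_n$ by a constant times $e^{-c\,r_n^{\alpha}}$ so that $\log m_n=o(r_n^{\alpha})$ forces $m_n$ times the tail to vanish. If anything, your treatment of (\ref{M1'}) is more explicit than the paper's, which only records the pointwise estimate $m_n g(r_n)\to 0$ and leaves the passage to the integral implicit.
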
 

\begin{proof} 
Using change of variables to polar coordinates and $r^a /8 =t$,  (\ref{M4'}) is checked. For $a \in (0,2]$
\begin{eqnarray*}
 \int_{\mathbb{R}^2} g(y) dy 
= const \;   \int_{0}^{\infty} t^{\frac{2}{a} - \frac{3}{2}}e^{ - t} dt  < \infty.
\end{eqnarray*}
 For (\ref{M1'}), notice that for sufficiently large $n, \;  m_n  g(r_n) \leq m_n  e^{- \theta r_n^{\alpha}/2}  = o(1)$ provided $ \displaystyle \log m_n = o(r_n^a)$. This completes the proof.
\end{proof}}

Finally, we find the condition under which (\ref{replace}) holds.
\begin{Proposition} For the Brown-Resnick process, (\ref{replace}) holds if $\frac{|S_n| \lambda_n^2}{m_n^3} \rightarrow 0$. \end{Proposition}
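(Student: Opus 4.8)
The plan is to reduce (\ref{replace}) to the single quantitative estimate $|\rho_{AB,m}(h)-\rho_{AB}(h)|=O(m_n^{-1})$ for every fixed $h\in H$. Granting this, the conclusion is immediate: substituting the bound into the left-hand side of (\ref{replace}) gives
\[
\sqrt{\frac{|S_n|\lambda_n^2}{m_n}}\,\bigl|\rho_{AB,m}(h)-\rho_{AB}(h)\bigr| = O\!\left(\sqrt{\frac{|S_n|\lambda_n^2}{m_n^3}}\right)\longrightarrow 0
\]
under the hypothesis $|S_n|\lambda_n^2/m_n^3\to 0$. This is the non-lattice analogue of the argument used for the last assertion of Example \ref{example.br.1}.

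To obtain the $O(m_n^{-1})$ rate I would Taylor-expand the exact expressions for the numerator and denominator of $\rho_{AB,m}(h)=\tau_{AB,m}(h)/p_m(A)$ that are already recorded in (\ref{BR.calculation}). Writing $A=(c_A,\infty)$, $B=(c_B,\infty)$, and using the unit-Fr\'echet marginal $P(X_{\mathbf 0}\le x)=e^{-1/x}$ together with the H\"usler--Reiss bivariate law $F(y_1,y_2)$ quoted before (\ref{extremogram.br}), a second-order expansion of $e^{-1/(c_A a_m)}$, $e^{-1/(c_B a_m)}$ and of $F(a_m c_A,a_m c_B)$ yields
\[
p_m(A)=\frac{m_n}{c_A a_m}+O\!\left(\frac{m_n}{a_m^2}\right),\qquad
\tau_{AB,m}(h)=\frac{m_n}{c_A a_m}\bar\Phi_{c_A,c_B}(\delta(h))+\frac{m_n}{c_B a_m}\bar\Phi_{c_B,c_A}(\delta(h))+O\!\left(\frac{m_n}{a_m^2}\right).
\]

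Forming the ratio, the common prefactor $m_n/a_m$ cancels, leaving
\[
\rho_{AB,m}(h)=\frac{c_A^{-1}\bar\Phi_{c_A,c_B}(\delta(h))+c_B^{-1}\bar\Phi_{c_B,c_A}(\delta(h))+O(a_m^{-1})}{c_A^{-1}+O(a_m^{-1})}=\rho_{AB}(h)+O(a_m^{-1}),
\]
where the last equality uses (\ref{extremogram.br}) and the fact that the denominator converges to $c_A^{-1}>0$ and is thus bounded away from $0$ for large $n$. Since $a_m=O(m_n)$ (equivalently $m_n/a_m$ is bounded), $O(a_m^{-1})=O(m_n^{-1})$, which is the desired estimate.

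There is essentially no obstacle here beyond careful bookkeeping of the remainder terms; the one point worth flagging is that it is the full ratio $\rho_{AB,m}(h)$, not the numerator and denominator separately, that carries the $O(m_n^{-1})$ rate, so the cancellation of the $m_n/a_m$ prefactor is the crucial algebraic step. Uniformity of the implied constants over the finite lag set $H$ is automatic, each $\delta(h)=\theta\|h\|^{\alpha}$ being a fixed positive constant.
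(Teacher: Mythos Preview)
Your proposal is correct and follows essentially the same route as the paper: both arguments Taylor-expand $p_m(A)$ and $\tau_{AB,m}(h)$ from (\ref{BR.calculation}) to extract the rate $|\rho_{AB,m}(h)-\rho_{AB}(h)|=O(1/m_n)$, then conclude via $\sqrt{|S_n|\lambda_n^2/m_n}\cdot O(1/m_n)=O\bigl(\sqrt{|S_n|\lambda_n^2/m_n^3}\bigr)\to 0$. The paper packages the algebra as a cross-multiplication, $|\rho_{AB,m}-\rho_{AB}|=\frac{1+o(1)}{\mu(A)}|\tau_{AB,m}\,\mu(A)-\tau_{AB}\,p_m(A)|=O(m_n/a_m^2)=O(1/m_n)$, whereas you form the ratio directly, but the content is identical. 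One small wording fix: the implication $O(a_m^{-1})=O(m_n^{-1})$ needs $m_n=O(a_m)$ (your parenthetical ``$m_n/a_m$ bounded'' is the right condition), not $a_m=O(m_n)$; in the unit-Fr\'echet case $a_m\sim m_n$, so both hold and the point is moot.
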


\begin{proof} From (\ref{BR.calculation}),
\begin{center}
$ \displaystyle |{\rho}_{AB,m}(h) - {\rho}_{AB}(h)| =\frac{1+o(1)}{\mu(h)} |{\tau}_{AB,m}(h) \mu(h) - {\tau}_{AB}(h) p_m(h)| = \frac{1+o(1)}{\mu(h)}   O({m_n}/{a_m^2}) =  O({1}/{m_n}). $
\end{center}
Therefore, (\ref{replace}) holds if $\frac{|S_n| \lambda_n^2}{m_n^3} \rightarrow 0$.
\end{proof}

\section*{Acknowledgments}
We thank Christina Steinkohl for discussions on the proofs and suggestions for the application. We also would like to thank Chin Man (Bill) Mok for providing the Florida rainfall data and acknowledge that the data is provided by the Southwest Florida Water Management District (SWFWMD). All authors acknowledge the support by the TUM Institute for Advanced Study(TUM-IAS). The second author's research was partly supported by ARO MURI grant W11NF-12-1-0385.


\bibliographystyle{IEEEtranN}
\bibliography{reference}
\end{document}